\documentclass[a4paper, 10pt]{amsart}

\usepackage{amsmath, amscd, amssymb, mathrsfs, mathtools}
\usepackage{extarrows}
\usepackage[all]{xy}
\usepackage[colorlinks=false, linktocpage]{hyperref} 
\usepackage[utf8]{inputenc}
\usepackage[T1]{fontenc}
\usepackage{tikz-cd}
\usepackage[left=1.23in, right=1.23in, top=1in, bottom=1.3in,  headheight=13.6pt]{geometry} 
\usepackage{enumitem}
\numberwithin{equation}{section}

\title[Deligne Conjecture for Hecke Characters]{Deligne's Conjecture on the Critical Values of Hecke $L$-Functions}

\author{Han-Ung Kufner}
\address{Fakultät für Mathematik\\
  Universität Regensburg\\
  93040 Regensburg, Germany}
\email{han-ung.kufner@mathematik.uni-regensburg.de}
\date{\today}

\theoremstyle{plain}
    \newtheorem{theorem}{Theorem}[section]
	\newtheorem*{theorem*}{Theorem}
    \newtheorem{lemma}[theorem]{Lemma}
    \newtheorem{proposition}[theorem]{Proposition}
    \newtheorem*{proposition*}{Proposition}
    
    \newtheorem{definition}[theorem]{Definition}
    
    \newtheorem{notation}[theorem]{Notation}

\theoremstyle{definition}
    \newtheorem{remark}[theorem]{Remark}

\DeclareMathOperator{\End}{End}
\DeclareMathOperator{\Aut}{Aut}
\DeclareMathOperator{\Sym}{Sym}
\DeclareMathOperator{\TSym}{TSym}
\DeclareMathOperator{\Hom}{Hom}
\DeclareMathOperator{\Spec}{Spec}
\DeclareMathOperator{\GL}{GL}
\DeclareMathOperator{\tr}{tr}
\DeclareMathOperator{\Gal}{Gal}
\DeclareMathOperator{\im}{im}
\DeclareMathOperator{\Map}{Map}

\newcommand{\sH}{\mathscr{H}}
\newcommand{\sT}{\mathscr{T}}
\newcommand{\fra}{\mathfrak{a}}
\newcommand{\frb}{\mathfrak{b}}
\newcommand{\frc}{\mathfrak{c}}
\newcommand{\frf}{\mathfrak{f}}
\newcommand{\frp}{\mathfrak{p}}
\newcommand{\cO}{\mathcal{O}}
\newcommand{\sP}{\mathcal{P}}
\newcommand{\cR}{\mathcal{R}}
\newcommand{\CC}{\mathbf{C}}
\renewcommand{\AA}{\mathbf{A}}
\newcommand{\PP}{\mathbf{P}}
\newcommand{\QQ}{\mathbf{Q}}
\newcommand{\RR}{\mathbf{R}}
\newcommand{\ZZ}{\mathbf{Z}}
\newcommand{\TT}{\mathbf{T}}

\renewcommand{\det}{\mathrm{det}}
\newcommand{\can}{\mathrm{can}}
\newcommand{\op}{\mathrm{op}}
\newcommand{\Gm}{\mathbf{G}_m}
\newcommand{\dR}{\mathrm{dR}}
\newcommand{\id}{\mathrm{id}}
\newcommand{\into}{\hookrightarrow}

\newcommand{\Fr}{\mathrm{Fr}}
\newcommand{\Mod}{\mathrm{Mod}}
\newcommand{\Res}{\operatorname{Res}}
\newcommand{\Lie}{\operatorname{Lie}}

\newcommand{\xto}{\xrightarrow}
\renewcommand{\Re}{\operatorname{Re}}
\newcommand{\ab}{\mathrm{ab}}
\newcommand{\bs}{\backslash}
\newcommand{\tor}{\mathrm{tor}}
\newcommand{\et}{\mathrm{et}}
\newcommand{\cyc}{\mathrm{cyc}}
\newcommand{\ol}{\overline}

\newcommand{\wtilde}{\widetilde}
\newcommand{\cV}{\mathcal{V}}
\newcommand{\AH}{\mathrm{AH}}
\newcommand{\EK}{\operatorname{EK}}
\newcommand{\tEK}{\wtilde{\operatorname{EK}}}
\newcommand{\cEK}{\mathcal{EK}}
\newcommand{\mom}{\mathrm{mom}}
\newcommand{\Ab}{\mathrm{Ab}}
\newcommand{\Crit}{\mathrm{Crit}}
\newcommand{\Sch}{\mathrm{Sch}}
\newcommand{\CM}{\mathrm{CM}}
\newcommand{\M}{\mathrm{M}}
\newcommand{\balpha}{\boldsymbol{\alpha}}
\newcommand{\bbeta}{\boldsymbol{\beta}}

\setcounter{tocdepth}{1}
\bibliographystyle{alpha}

\begin{document}
\begin{abstract}
In this paper we give a proof of Deligne’s conjecture on the critical values of $L$-functions for arbitrary algebraic Hecke characters. This extends a result of Blasius, \cite{blasius}, which only works in the case of CM fields. The key new insight is that the Eisenstein-Kronecker classes of Kings-Sprang, \cite{kingssprang}, which allow for a cohomological interpretation of the value $L(\chi,0)$ for Hecke characters $\chi$ of arbitrary totally imaginary fields, can be regarded as de Rham classes of Blasius’ reflex motive.
\end{abstract}
\maketitle
\tableofcontents

\section*{Introduction}
In \cite{deligne}, Deligne formulated his far-reaching conjecture which expresses the critical values of a motivic $L$-function in terms of a period. In this paper, we prove Deligne's conjecture for arbitrary critical algebraic Hecke characters.
More precisely, let $\chi$ be an algebraic Hecke character of a number field $L$ and let $T$ denote the number field generated by the values of $\chi$. Within the full Tannakian subcategory of motives for absolute Hodge cycles that are generated by Artin motives and potentially CM abelian varieties, one can attach to $\chi$ a unique isomorphism class of a motive $M(\chi)$ over $L$ with coefficients in $T$. For $s \in \CC$ with sufficiently large real part, the $L$-function of $M(\chi)$ is given by the absolutely convergent Hecke $L$-series $L(\chi,s) = \sum_\fra \frac{\chi(\fra)}{N\fra^s}$ with values in $T \otimes \CC$, where $\fra$ runs through all integral ideals coprime to the conductor of $\chi$. By classical results due to Hecke, $L(\chi,s)$ admits a meromorphic continuation to $\CC$ and satisfies a functional equation. We say that $\chi$ is \emph{critical} if the $\Gamma$-factors on both sides of the functional equation of $L(\chi,s)$ have no pole at $s = 0$. For this, it is necessary that $L$ is either totally real or that it contains a CM-field. 

Let $RM(\chi)$ denote the restriction of scalars to $\QQ$ and let $RM(\chi)^+_B$ be the subspace in the Betti realization fixed by the involution induced by complex conjugation. Let $F^\bullet$ denote the Hodge filtration on the de Rham realization $M(\chi)_\dR$. Let $I_\infty$ denote the comparison isomorphism between the de Rham and the Betti realization of a motive. If $\chi$ is critical, the composite
\[
I_+: RM(\chi)_B^+ \otimes \CC \subset RM(\chi)_B \otimes \CC \xto{I_\infty^{-1}} RM(\chi)_\dR \otimes \CC \to RM(\chi)_\dR / F^0 
\]
is a $T \otimes \CC$-linear isomorphism and Deligne defines the period
\[
c^+(\chi) = \det(I_+) \in (T \otimes \CC)^\times,
\]
where the determinant is calculated with respect to bases of the $T$-vector spaces $RM(\chi)_B^+$ and $RM(\chi)_\dR / F^0$. Note that $c^+(\chi)$ is only well-defined up to a factor in $T^\times$. Our main result is a proof of Deligne's conjecture (\cite{deligne}, Conjecture 2.8) for the motive $M(\chi)$:

\begin{theorem*}[Theorem \ref{deligne_conjecture}]
Let $L$ and $T$ be number fields and let $\chi$ be a critical algebraic Hecke character of $L$ with values in $T$. Then $L(\chi,0)$ coincides with $c^+(\chi)$ up to a factor in $T$.
\end{theorem*}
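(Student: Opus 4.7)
The plan is to exploit the key new insight described in the abstract: the Eisenstein-Kronecker classes of Kings-Sprang provide a cohomological realisation of $L(\chi,0)$ for critical Hecke characters of arbitrary totally imaginary fields, and these classes can be identified with de Rham classes of Blasius' reflex motive. This both extends Blasius' reach beyond the CM case and gives a conceptual bridge between the analytic formula for $L(\chi,0)$ and the motivic period $c^+(\chi)$.

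First I would reduce to the essential new case. If $L$ is totally real, then criticality forces $\chi$ to be (up to finite order twist) an integer power of the norm character, and Deligne's conjecture in this range is essentially the classical Dirichlet $L$-value formula. Otherwise $L$ is totally imaginary, and for $\chi$ to admit critical values $L$ must contain a CM subfield $L_0$, so one reduces to this setting. Functorial properties of the motive $M(\chi)$ and of its restriction of scalars, together with standard compatibilities of $L$-values and Deligne periods under induction and twisting by Artin characters, allow one to work under convenient auxiliary assumptions on the infinity type of $\chi$ and on the splitting behaviour of the conductor.

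The central step is the motivic interpretation of the Eisenstein-Kronecker class. On the analytic side, Kings-Sprang realise $L(\chi,0)$ as a pairing of a canonical Eisenstein-Kronecker cohomology class on a CM abelian scheme $A$ with a torsion section determined by $\chi$. I would construct a morphism, in the Tannakian category of motives for absolute Hodge cycles generated by Artin motives and potentially CM abelian varieties, from the relevant piece of $H^\bullet(A)$ to the de Rham realisation of Blasius' reflex motive, and verify that under this morphism the Eisenstein-Kronecker class goes to a class that is $T$-rational in the sense used to define $c^+(\chi)$. Translating the Kings-Sprang integral formula through the comparison isomorphism $I_\infty$ then expresses $L(\chi,0)$ as the determinant of $I_+$, computed with respect to a $T$-basis of $RM(\chi)_\dR / F^0$ coming from the Eisenstein-Kronecker class and a $T$-basis of $RM(\chi)_B^+$ coming from the natural Betti structure, which is the desired identity modulo $T^\times$.

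The main obstacle, and the genuinely new content, will be the identification of the Eisenstein-Kronecker class as a de Rham class on Blasius' reflex motive. Kings-Sprang's construction is formulated in terms of logarithm sheaves on CM abelian schemes and their specialisation along torsion sections, whereas Blasius' reflex motive is defined at a higher Tannakian level via CM types, reflex norms and restriction of scalars. Bridging these two descriptions compatibly with the Hodge filtration, the $T$-coefficient action, and the involution induced by complex conjugation is the heart of the argument; the remaining reductions, base-change comparisons and period bookkeeping should then follow by standard manipulations in the theory of motives for absolute Hodge cycles together with Blasius' period calculations in the CM case.
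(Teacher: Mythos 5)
Your overall strategy---realise $L(\chi,0)$ via the Eisenstein--Kronecker classes of Kings--Sprang and identify these with de Rham classes of Blasius' reflex motive---is the right one, and your reduction to the totally imaginary case is as in the paper. But there is a genuine gap in how you propose to close the argument. You write that translating the Kings--Sprang formula through $I_\infty$ ``expresses $L(\chi,0)$ as the determinant of $I_+$, computed with respect to a $T$-basis of $RM(\chi)_\dR/F^0$ coming from the Eisenstein--Kronecker class.'' This cannot work as stated: the Eisenstein--Kronecker classes live on abelian varieties with complex multiplication \emph{by} $L$, hence in the de Rham realization of the reflex motive $M(\Xi)$, which is defined over the reflex field $E$ of a CM-type of $L$ --- not in $RM(\chi)_\dR/F^0$, where $M(\chi)$ is a motive \emph{over} $L$. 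These are different motives over different fields, and no direct morphism carries the EK class into $RM(\chi)_\dR/F^0$. The bridge between them is Blasius' period relation (Theorem \ref{blasius_period_relation}): $F^{d(\alpha)+d(\beta)}RM(\Xi)_\dR$ is one-dimensional over $T$, and the period $v(\chi)$ of a basis $\omega$ against a distinguished system of Betti classes $\{a_\eta\}$ satisfies $(2\pi i)^{-d(\beta)}v(\chi) = c^+(\chi) \bmod T^\times$. This is a substantial external theorem, not ``period bookkeeping,'' and your proposal does not identify it as a required input; without it the comparison between the reflex period and $c^+(\chi)$ does not close.

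A second, related omission: Blasius' relation only determines $v(\chi)$ relative to a system $\{a_\eta\}_{\eta \in J_E}$ of $T$-bases of $M(\Xi)_B^\eta$ satisfying the precise Galois transformation law $I_f(a_\eta)^\tau = \xi(\tau,\eta)\, I_f(a_{\tau\eta})$. To extract $L(\chi,0)$ from the EK class one must exhibit such a system explicitly in terms of Betti classes of the CM abelian variety (via the presentation $M(\Xi) \cong e_\Xi \cdot R_{F/E}(h^1(A)^\alpha \otimes h^1(A^\vee)^\beta)$) and, crucially, prove these explicit elements are non-zero in the quotient $M(\Xi)_B^\eta$; this is Theorem \ref{a_eta_elements_transformation} and relies on the main theorem of complex multiplication. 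Your ``natural Betti structure'' is too vague to substitute for this step, and the sign $\varepsilon_\Phi(\sigma)$ entering both the transformation law and the Galois conjugates of the EK classes must be tracked for the two sides to match.
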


In the case where $L$ is totally real, the above result follows from the work of Euler, Siegel, \cite{siegel}, and Klingen, \cite{klingen}. When $L$ is an imaginary quadratic field, Damerell, \cite{damerell}, showed that the critical $L$-values agree with certain periods of CM elliptic curves up to an algebraic number. Deligne's conjecture was settled in the imaginary quadratic case by Goldstein-Schappacher in \cite{goldsteinschappacher} and \cite{goldsteinschappacher2}. 
If $L$ is a CM-field, algebraicity results were obtained by Shimura, \cite{shimura2}, and Katz, \cite{katz} and it was shown by Deligne, \cite{deligne}, that the results of Shimura are compatible with his conjecture. In \cite{blasius}, Blasius was able to prove Deligne's conjecture for algebraic Hecke characters of CM-fields. 
If $L$ contains an imaginary quadratic number field, algebraicity results have previously been obtained by 
Colmez, \cite{colmez}, and more recently by Bergeron-Charollois-Garcia, \cite{BCG}.
For arbitrary number fields $L$ containing a CM-field, Kings-Sprang, \cite{kingssprang}, were able to relate the $L$-value $L(\chi,0)$ to periods of abelian varieties with complex multiplication by $L$ up to an algebraic integer. This allowed them to establish Deligne's conjecture up to a factor in $T \otimes \overline{\QQ}$. Central to their approach is a novel construction of equivariant coherent cohomology classes on abelian varieties with complex multiplication by $L$. For arbitrary $L$ containing a CM-field $K$, Deligne's conjecture was announced by Harder-Schappacher, \cite{harderschappacher}, but details were never published. Roughly speaking, the strategy outlined in loc.\ cit.\ is to use results on the Eisenstein cohomology for $\GL_n$, where $n = [L:K]$, to show that the ratio $L(\chi,0)/L(\chi|_{\AA_K^\times},0)$ behaves exactly like the corresponding ratio between the $c^+$-periods. This would reduce the general case to the CM-case established by Blasius. For $n = 2$, the necessary theory is developed in \cite{harder}.

\subsection*{Outline of the argument}

\subsubsection*{Short summary}
Let us first give a quick summary of the proof presented in this paper, before we give a more detailed outline. Let $L$ be a totally imaginary number containing a CM-field and let $\chi$ be a critical algebraic Hecke character of $L$. The classes constructed in \cite{kingssprang}, which allow to access $L(\chi,0)$, live on abelian varieties with complex multiplication by the field $L$. In contrast, the motive $M(\chi)$ has $L$ as its field of definition: A prototypical example for such a motive would be the $h^1$ of a CM abelian variety defined over $L$. Thus, the question arises, how to relate the motive $M(\chi)$ and its period $c^+(\chi)$ to such abelian varieties with complex multiplication by $L$. 
It is a seminal insight of Blasius how to establish this connection: From $M(\chi)$ he constructs a motive $M(\Xi)$, referred to as the reflex motive in the following, and recovers the period $c^+(\chi)$ (up to a factor in $T^\times$) in terms of another period of $M(\Xi)$, which we call the reflex period for a moment.
The reflex motive $M(\Xi)$ is defined over the reflex field of a certain CM-type of $L$ and the key point is, as our notation already suggests, that it is the motive for a Hecke character $\Xi$, which essentially is obtained from $\chi$ by precomposing with the reflex norm. This makes it possible to give an alternative construction of $M(\Xi)$ in terms of an abelian variety with complex multiplication by $L$. The construction shows that the Eisenstein-Kronecker classes of Kings-Sprang naturally live in the de Rham realization $M(\Xi)_\dR$ and they are admissible for computing the reflex period. By giving an explicit presentation, in terms of the alternative construction of $M(\Xi)$, of those Betti classes which are needed to calculate the reflex period and slightly extending results of Kings-Sprang, we show that the reflex period coincides with the $L$-value $L(\chi,0)$. Deligne's conjecture then follows from Blasius' result.

Let us note that also in the case where $L$ is a CM-field, our approach simplifies the proof in \cite{blasius}. There Blasius proves a subtle reciprocity law, which combined with his period relation establishes a connection between special values of Hilbert modular forms and the period $c^+(\chi)$. In our approach, this reciprocity law can be avoided entirely by directly constructing classes in the reflex motive.

We now give a more detailed outline of the argument. Let us fix a bit of notation: For any number field $F$, we let $J_F$ denote the set of embeddings $F \into \CC$ and let $I_F$ denote the free abelian group generated by $J_F$. For any element $\mu = \sum_\sigma \mu(\sigma) \sigma$ in $I_F$ we let $d(\mu) = \sum_{\sigma} \mu(\sigma)$ denote its degree. If $x \in F$, we write $x^\mu \coloneqq \prod_{\sigma} \sigma(x)^{\mu(\sigma)}$. Moreover, we let $F^\mu$ denote the number field generated by the elements $x^\mu$ for $x \in F$. Let $F_f$ be the ring of finite adeles of $F$.

\subsubsection*{Blasius' period relation}
Assume that $L$ is a number field containing a CM-field and that $\chi$ is an algebraic Hecke character of $L$ with field of values $T$. 
The criticality of $\chi$ implies that there exists a CM-type $\Phi$ of $L$, induced from a CM-subfield of $L$, such that the infinity-type $\chi_a \in I_L$ of $\chi$ can be expressed in the form
$\chi_a = \beta - \alpha$
where $\alpha$ is supported on $\Phi$ with $\alpha(\sigma) \geq 1$ for all $\sigma \in \Phi$ and $\beta$ is supported in $\ol{\Phi}$ with $\beta(\ol{\sigma}) \geq 0$ for all $\ol{\sigma} \in \ol{\Phi}$.
Let $E$ denote the reflex field of $(L,\Phi)$ and let $\Phi^\ast$ denote the reflex CM-type of $\Phi$ (cf.\ Section \ref{section_cm_types}). Then, by regarding $\Phi^\ast$ as a continuous homomorphism $\Phi^\ast: \AA_E^\times \to \AA_L^\times$, we define the Hecke character (cf.\ (\ref{def_character_xi}))
\[
\Xi = (\chi \circ \Phi^\ast)^{-1}  \cdot \lVert \cdot \rVert_E^{-d(\beta)}  \cdot \varepsilon_\Phi : \AA_E^\times \to T^\times,
\]
where $\lVert \cdot \rVert_E$ is the idele norm and $\varepsilon_\Phi$ is a certain sign character attached to $\Phi$, cf.\ Definition \ref{def_sign_epsilon}.
Attached to $\Xi$ there is a motive $M(\Xi)$ over $E$ with coefficients in $T$.
The following theorem of Blasius, which may be seen as the starting point of our proof, expresses the period $c^+(\chi)$ in terms of the motive $RM(\Xi)$:

\begin{theorem*}[Blasius] The motive $M(\Xi)$ satisfies the following properties:
\begin{enumerate}
\item For every embedding $\eta$ of $E$, there exists a $T$-basis $a_\eta$ of the Betti realization $M(\Xi)_B^\eta$ such that
\begin{equation*}
I_f(a_\eta)^\tau = \xi(\tau,\eta) \cdot I_f(a_{\tau \eta})
\end{equation*}
for all $\tau \in \Gal(\ol{\QQ}/\QQ)$. Here, the $\xi(\tau,\eta)$ denote explicitly defined elements in $T_f$, cf.\ (\ref{def_xi_elements}), and $I_f$ denotes the comparison isomorphism between the Betti and the finite adelic realization.
\item The $T$-vector space $F^{d(\alpha)+d(\beta)} RM(\Xi)_\dR$ is $1$-dimensional.
\item If $\omega \in F^{d(\alpha)+d(\beta)} RM(\Xi)_\dR$ is a basis, one has
\[
I_\infty(\omega) = (2\pi i)^{d(\beta)} \cdot c^+(\chi) \cdot \sum_{\eta \in J_E} e(\eta) a_\eta \mod T^\times,
\]
where $e(\eta) \in T \otimes \CC$ denotes the idempotent that cuts out the direct factor $T \otimes_{E,\eta} \CC$.
\end{enumerate}
\end{theorem*}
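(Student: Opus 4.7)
This is a theorem of Blasius \cite{blasius}, and my plan follows his strategy, organized around an explicit analysis of the rank-one motive $M(\Xi)$ over $E$ with coefficients in $T$. The starting point is to make precise each of its realizations: $M(\Xi)_B^\eta$ is a $1$-dimensional $T$-vector space for each $\eta \in J_E$, the de Rham realization $M(\Xi)_\dR$ is free of rank one over $E \otimes T$, and $M(\Xi)_{\AA_f}$ is free of rank one over $T_f$ with Galois action prescribed by $\Xi$. The decisive input is the infinity type of $\Xi$: a direct computation from the formula $\Xi = (\chi \circ \Phi^\ast)^{-1} \cdot \lVert \cdot \rVert_E^{-d(\beta)} \cdot \varepsilon_\Phi$, combined with $\chi_a = \beta - \alpha$ and the behaviour of $\Phi^\ast$ on archimedean components, pins down the Hodge type of $M(\Xi)$ at each embedding $\eta$ and shows that $\Xi$ has weight $d(\alpha) + d(\beta)$ (the sign character $\varepsilon_\Phi$ and the weight twist do not affect the support, only the weight and the rational structure).

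With this setup, part (ii) follows from reading off the Hodge filtration: at each $\eta \in J_E$ only one Hodge type contributes to the top degree $d(\alpha)+d(\beta)$, and summing these contributions in the restriction of scalars yields a $1$-dimensional $T$-vector space. For part (i), I would pick any nonzero $a_\eta \in M(\Xi)_B^\eta$ and observe that, for $\tau \in \Gal(\overline{\QQ}/\QQ)$, the Betti realization $M(\Xi)_B^\eta$ is Galois-conjugated to $M(\Xi)_B^{\tau\eta}$, while the Galois action on $M(\Xi)_{\AA_f}$ is governed by $\Xi$. Rescaling the $a_\eta$ so that their images in $T_f$ match the prescribed data turns the a priori transformation law into the explicit identity $I_f(a_\eta)^\tau = \xi(\tau,\eta) \, I_f(a_{\tau\eta})$, with the $\xi(\tau,\eta)$ appearing as the natural ratios of $\Xi$-values that come out of this rescaling.

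The main obstacle is part (iii), Blasius's period relation, which is the heart of the theorem. The strategy is to exhibit, at the level of motives, an identification relating $M(\chi)$ over $L$ to $M(\Xi)$ over $E$ induced by the reflex construction $\chi \mapsto \Xi$. Under this identification, the $+$-part $RM(\chi)_B^+$ corresponds to a specific combination of the $a_\eta$; the quotient $RM(\chi)_\dR / F^0$ corresponds to the line in $RM(\Xi)_\dR$ spanned by $\omega$ modulo lower pieces of the Hodge filtration; and the factor $(2\pi i)^{d(\beta)}$ accounts for the Tate twist implicit in the $\lVert \cdot \rVert_E^{-d(\beta)}$ factor appearing in the definition of $\Xi$. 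The decomposition $RM(\Xi)_B \otimes \CC = \bigoplus_\eta M(\Xi)_B^\eta \otimes \CC$ via the idempotents $e(\eta)$ is what produces the sum $\sum_\eta e(\eta) a_\eta$ in the formula. The delicate point, and where the main work lies, is verifying that the determinant computation defining $c^+(\chi)$ is transported through this identification to yield exactly the claimed coefficient modulo $T^\times$: this requires careful bookkeeping with CM-types, the reflex field $E$, and the way complex conjugation interacts with $\Phi$, and is the step where Blasius's explicit analysis of CM motives is essential.
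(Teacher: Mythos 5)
The paper does not actually prove this theorem: it is imported wholesale from Blasius. Part (i) is Proposition \ref{existence_betti_system}, cited as \cite{blasius}, Corollary 4.4.2, and parts (ii)--(iii) are Theorem \ref{blasius_period_relation}, cited as \cite{blasius}, Proposition 5.3.4 (CM case) and \cite{blasius97}, Theorem H.3 (general case), with the one substantive remark that the sign of Definition \ref{def_sign_epsilon} must replace the sign used in \cite{blasius}, Theorem 4.5.2. So your proposal is attempting something the paper deliberately does not do, and judged as a proof it has genuine gaps rather than being a different but complete route.

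Concretely: in part (i), ``pick any nonzero $a_\eta$ and rescale'' silently assumes that the prescribed elements $\xi(\tau,\eta)$ of (\ref{def_xi_elements}) satisfy the cocycle identity $\xi(\tau_1\tau_2,\eta)=\xi(\tau_1,\tau_2\eta)\xi(\tau_2,\eta)$ \emph{and} that for $\tau$ stabilizing $\eta$ the scalar $\xi(\tau,\eta)$ coincides with the eigenvalue by which $\tau$ actually acts on $M(\Xi)_f^\eta$; this second point is the entire content of the statement and rests on the main theorem of complex multiplication through the cosets $L(\eta\Phi,\tau)$ and Proposition \ref{properties_of_cosets} --- it is not a normalization one can impose by rescaling. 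In part (ii), your dimension count as written (``at each $\eta\in J_E$ only one Hodge type contributes \dots and summing these contributions yields a $1$-dimensional $T$-vector space'') would give dimension $[E:\QQ]$ over $T$; the correct mechanism is that for each coefficient embedding $\iota\in J_T$ exactly one $\eta$ carries Hodge type $(d(\alpha)+d(\beta),0)$, cf.\ the analogous computation in Proposition \ref{de_rham_split}. Finally, part (iii) --- which you correctly identify as the heart --- is only described, not proved: the transport of the determinant defining $c^+(\chi)$ through the reflex construction is precisely Blasius's Proposition 5.3.4, and your parenthetical that $\varepsilon_\Phi$ ``does not affect the support, only the weight and the rational structure'' understates its role: a sign character does not change the weight at all, but the rational structure (mod $T^\times$) is exactly what the period relation asserts, and getting the sign right is the one point the paper flags as needing care in the non-CM case.
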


\subsubsection*{Relation to CM abelian varieties}
Let us now describe a more geometric construction of the reflex motive $M(\Xi)$.
Let $A/F$ be an abelian variety over a number field $F$ of CM-type $(L,\Phi)$ and let $\psi$ denotes its associated Hecke character. Then $F$ contains $E$ and by comparing infinity-types, one sees that
\begin{equation}\tag{1}\label{hecke_identitiy}
\Xi \circ N_{F/E} = \smash{\psi}^\alpha \cdot \smash{\ol{\psi}}^\beta
\end{equation}
for sufficiently large $F$. Here $\smash{\psi}^\alpha$ (and similarly $\smash{\ol{\psi}}^\beta$) is simply given by composing $\psi$ with the homomorphism $\alpha: L^\times \to (L^\alpha)^\times$.
The identity of Hecke characters (\ref{hecke_identitiy}) should admit a counterpart on the level of motives. By definition, $h^1(A)$ is a motive for the Hecke character $\psi$. 
One can construct from $h^1(A)$ a motive $h^1(A)^\alpha$ for the Hecke character $\psi^\alpha$ as a suitable direct factor in 
\[
\Sym^{d(\alpha)} R_{L/\QQ} h^1(A),
\] 
where $R_{L/\QQ}$ denotes restriction of coefficients. This construction, which has been outlined by Blasius in \cite{blasius}, 5.5, will be carefully investigated in Section \ref{section_5_M_alpha}.
Since $h^1(A^\vee)$ for the dual abelian variety $A^\vee$ is a motive for $\ol{\psi}$, we obtain from (\ref{hecke_identitiy}) an isomorphism of motives
\begin{equation}\tag{2}\label{intro_motives_iso}
M(\Xi) \times_E F \cong h^1(A)^\alpha \otimes h^1(A^\vee)^\beta.
\end{equation}
Assume now that $F/E$ is a Galois extension. Applying restriction of scalars, we then obtain an isomorphism $R_{F/E}(M(\Xi) \times_E F) \cong R_{F/E}(h^1(A)^\alpha \otimes h^1(A^\vee)^\beta)$ and by transport of structure the Galois group $\Gal(F/E)$ acts on $R_{F/E}(h^1(A)^\alpha \otimes h^1(A^\vee)^\beta)$ such that $M(\Xi)$ can be recovered by passing to the coinvariants. We obtain:

\begin{proposition*}[Section \ref{subsection_alt_construction}]
The motive $M(\Xi)$ is isomorphic to a quotient (in fact, a direct factor) of the restriction of scalars
\[
R_{F/E}(h^1(A)^\alpha \otimes h^1(A^\vee)^\beta).
\]
\end{proposition*}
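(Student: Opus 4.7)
My plan is to follow the strategy sketched in the excerpt: apply Weil restriction $R_{F/E}$ to the isomorphism (\ref{intro_motives_iso}) and perform Galois descent inside the (semisimple, $\QQ$-linear) Tannakian category of motives for absolute Hodge cycles. Applying $R_{F/E}$ to both sides of (\ref{intro_motives_iso}) would yield an isomorphism
\[
R_{F/E}(M(\Xi) \times_E F) \cong R_{F/E}(h^1(A)^\alpha \otimes h^1(A^\vee)^\beta)
\]
in the category of motives over $E$.

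The second step would be to invoke the following general fact: for any motive $M$ over $E$ and any finite Galois extension $F/E$ with group $G = \Gal(F/E)$, the motive $R_{F/E}(M \times_E F)$ carries a canonical $E$-rational action of $G$, induced by the permutation action on the decomposition
\[
R_{F/E}(M \times_E F) \times_E F \cong \bigoplus_{g \in G} g^\ast(M \times_E F) \cong \bigoplus_{g \in G} M \times_E F.
\]
The unit of the adjunction $(-) \times_E F \dashv R_{F/E}$ exhibits $M$ as the $G$-invariants, equivalently coinvariants, since $|G|$ is invertible and the category is semisimple. Concretely, the idempotent $e = \tfrac{1}{|G|}\sum_{g \in G} g$ in $\End(R_{F/E}(M \times_E F))$ projects onto a direct factor isomorphic to $M$.

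Applying this with $M = M(\Xi)$ and transporting the $G$-action along the isomorphism displayed above, I would obtain a $G$-action on $R_{F/E}(h^1(A)^\alpha \otimes h^1(A^\vee)^\beta)$ whose averaging idempotent cuts out a direct summand isomorphic to $M(\Xi)$, proving the proposition. The main technical obstacle, though essentially formal, is to verify that the descent statement of the second paragraph holds in the category of motives for absolute Hodge cycles. This ought to reduce to the elementary linear algebra fact $(V \otimes_E F)^G = V$ for $E$-vector spaces $V$, applied on each realization (Betti, de Rham, $\ell$-adic), together with the observation that $R_{F/E}$ and base change $(-) \times_E F$ correspond realization-by-realization to the analogous operations on vector spaces.
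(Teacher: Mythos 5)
Your proposal is correct and follows essentially the same route as the paper: fix an isomorphism $M(\Xi) \times_E F \cong h^1(A)^\alpha \otimes h^1(A^\vee)^\beta$ (both sides being rank-one motives of type $(F, \Xi \circ N_{F/E}, T)$), apply $R_{F/E}$, transport the canonical $G_{F/E}$-action, and recover $M(\Xi)$ as the (co)invariants, i.e.\ as the image of the averaging idempotent. The formal descent step you flag is handled in the paper exactly as you suggest, by checking it realization-by-realization and invoking the full faithfulness of the realization functor.
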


\subsubsection*{Special values of $L$-functions}
The next step is to investigate Blasius' theorem in the context of the newly obtained presentation of the motive $M(\Xi)$. 
The strategy is now as follows:
\begin{itemize}
\item Find a presentation for the special basis elements $\{a_\eta \mid \eta \in J_E\}$ in terms of Betti classes of the motive $h^1(A)$.
\item Construct an element in $F^{d(\alpha)+d(\beta)} RM(\Xi)_\dR$, which admits a relation to the $L$-values $L(\chi,0)$. This is provided by taking a suitable linear combination of the Eisenstein-Kronecker classes constructed by Kings-Sprang.
\end{itemize}
Surprisingly, in the Betti realization the elements $a_\eta$ can be made very explicit. 
The construction of the motive $h^1(A)^\alpha$ allows to canonically attach an element $\gamma^{\balpha} \in h^1(A)^\alpha_B$ to every $\gamma \in h^1_B(A)$ in the Betti realization. 
For every embedding $\sigma$ of $F$, we choose an $L$-basis $\gamma_\sigma$ of $h^1_B(A^\sigma)$ and let $\gamma_\sigma^\vee \in h^1_B(A^{\sigma,\vee})$ denote dual basis with respect to the canonical pairing $h^1_B(A^\sigma) \otimes h^1_B(A^{\sigma,\vee}) \to (2 \pi i)^{-1} L$. Then, for every $\eta \in J_E$, we define $a_\eta$ as the image of 
\[
\sum_{\sigma|_E = \eta} t_\sigma \cdot \gamma_\sigma^{\balpha} \otimes (\gamma_\sigma^\vee)^{\bbeta} \in R_{F/E}(h^1(A)^\alpha \otimes h^1(A^\vee)^\beta)^\eta_B 
\]
in $M(\Xi)_B^\eta$. Here, the elements $t_\sigma \in T^\times$ are explicit factors depending on the choice of $\{\gamma_\sigma \mid \sigma \in J_F\}$, cf.\ Definition \ref{t_elements_independence}. 
The following theorem shows that the elements $a_\eta$ constructed above are in fact suitable for applying Blasius' theorem. The difficult part consists in showing that the elements $a_\eta$ do not vanish when passing to the quotient $M(\Xi)_B^\eta$.

\begin{theorem*}[Theorem \ref{a_eta_elements_transformation}]
For every embedding $\eta \in J_E$, the element $a_\eta$ is a basis element of $M(\Xi)_B^\eta$. For any $\tau \in \Gal(\ol{\QQ}/\QQ)$, one has
$I_f(a_\eta)^\tau = \xi(\tau,\eta) \cdot I_f(a_{\tau\eta})$.
\end{theorem*}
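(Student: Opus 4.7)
The plan is to establish the two assertions separately: first the Galois transformation law in the adelic realization of the ambient motive $R_{F/E}(h^1(A)^\alpha \otimes h^1(A^\vee)^\beta)$, and then the non-vanishing of $a_\eta$ in the $1$-dimensional $T$-vector space $M(\Xi)_B^\eta$. Denote by
\[
\tilde a_\eta = \sum_{\sigma|_E = \eta} t_\sigma \, \gamma_\sigma^{\balpha} \otimes (\gamma_\sigma^\vee)^{\bbeta} \in R_{F/E}(h^1(A)^\alpha \otimes h^1(A^\vee)^\beta)_B^\eta
\]
the explicit preimage of $a_\eta$. The ambient Betti realization decomposes as $\bigoplus_{\sigma|_E = \eta} (h^1(A)^\alpha \otimes h^1(A^\vee)^\beta)_B^\sigma$, and since $F/E$ is Galois the projection to $M(\Xi)_B^\eta$ is, via (\ref{intro_motives_iso}), the $\Gal(F/E)$-coinvariants map, i.e., the sum over the $[F:E]$ summands; each individual summand projects isomorphically onto $M(\Xi)_B^\eta$, but these isomorphisms differ by the transport-of-structure $\Gal(F/E)$-action.

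For the transformation law, using that $h^1(A)$ and $h^1(A^\vee)$ are motives for $\psi$ and $\ol\psi$ respectively, Section~\ref{section_5_M_alpha} should furnish explicit formulas for $\tau \in \Gal(\ol\QQ/\QQ)$: one has $I_f(\gamma_\sigma^{\balpha})^\tau = (\psi^\sigma)^\alpha(\tau)\cdot I_f(\gamma_{\tau\sigma}^{\balpha})$ up to the re-indexing $\sigma \mapsto \tau\sigma$ that $\tau$ induces on embeddings of $F$, and analogously the factor $(\ol{\psi}^\sigma)^\beta(\tau)$ from $I_f((\gamma_\sigma^\vee)^{\bbeta})^\tau$. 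Multiplying these and summing over $\sigma|_E = \eta$ regroups the resulting sum as one over $\sigma' = \tau\sigma$ with $\sigma'|_E = \tau\eta$. The product factor $(\psi^\sigma)^\alpha(\tau) \cdot (\ol{\psi}^\sigma)^\beta(\tau)$ rewrites, via the Hecke identity~(\ref{hecke_identitiy}) $\Xi \circ N_{F/E} = \psi^\alpha \ol\psi^\beta$, as a value of $\Xi$; together with the contributions of $\lVert \cdot \rVert_E^{-d(\beta)}$ coming from the $(2\pi i)^{-1}$-twist in the canonical pairing $h^1_B(A^\sigma) \otimes h^1_B(A^{\sigma,\vee}) \to (2\pi i)^{-1} L$ and of the sign character $\varepsilon_\Phi$, this must match the explicit formula for $\xi(\tau,\eta)$ in (\ref{def_xi_elements}). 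The quotient map to $M(\Xi)$ is $\Gal(\ol\QQ/\QQ)$-equivariant, so the identity descends from $\tilde a_\eta$ to $a_\eta$.

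For non-vanishing, fix $\sigma_0$ with $\sigma_0|_E = \eta$. Each $\gamma_\sigma^{\balpha} \otimes (\gamma_\sigma^\vee)^{\bbeta}$ is a $T$-basis of its summand, so each projects to a $T^\times$-multiple of a fixed basis of $M(\Xi)_B^\eta$, and a priori the $[F:E]$ multiples could cancel in the sum. The factors $t_\sigma$ of Definition~\ref{t_elements_independence} are designed precisely so that the family $\{t_\sigma \, \gamma_\sigma^{\balpha} \otimes (\gamma_\sigma^\vee)^{\bbeta}\}_{\sigma|_E=\eta}$ is a single $\Gal(F/E)$-orbit in $R_{F/E}(\cdots)_B^\eta$, equivalently so that all members project to the \emph{same} nonzero element of $M(\Xi)_B^\eta$. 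With this normalisation, $\tilde a_\eta$ projects to $[F:E]\cdot t_{\sigma_0} \, \gamma_{\sigma_0}^{\balpha} \otimes (\gamma_{\sigma_0}^\vee)^{\bbeta}$, a basis of $M(\Xi)_B^\eta$.

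As the author highlights, the non-vanishing is the main obstacle. Making the sketch above rigorous requires constructing the $t_\sigma$ explicitly and verifying that the resulting family genuinely forms a $\Gal(F/E)$-orbit, which entails tracking how the natural action on $\gamma_\sigma$ propagates through the $\balpha$-construction of Section~\ref{section_5_M_alpha}, through the tensor product with the $\bbeta$-dual, and through the transport-of-structure action induced by $R_{F/E}$. A clean route is to first verify the transformation law for $\tau \in \Gal(F/E)$ in isolation, which forces the summands in $\tilde a_\eta$ into a common $\Gal(F/E)$-orbit, and then extend to all of $\Gal(\ol\QQ/\QQ)$; matching the $(2\pi i)$-factor from the pairing with the norm $\lVert\cdot\rVert_E^{-d(\beta)}$ entering $\Xi$ is the most delicate step in the explicit comparison of cocycles.
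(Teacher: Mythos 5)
Your overall strategy matches the paper's: prove the transformation law by a cocycle computation in the adelic realization, then show non-vanishing by arranging that the $[F:E]$ summands of $\tilde a_\eta$ all project to the \emph{same} nonzero element of $M(\Xi)_B^\eta$, so that they add up rather than cancel. However, the two steps you defer to ``making the sketch rigorous'' are precisely the content of the proof, and your sketch is missing the inputs needed to carry them out. First, for the transformation law with general $\tau\in G_\QQ$ the factor on $I_f(\gamma_\sigma^{\balpha})$ is \emph{not} a value of $\psi$ or of $\Xi$: it is $\lambda(\tau,\sigma)^{\alpha}$ where $\lambda(\tau,\sigma)$ is defined by $I_f(\gamma_\sigma)^\tau=\lambda(\tau,\sigma)I_f(\gamma_{\tau\sigma})$, and matching the resulting cocycle with $\xi(\tau,\eta)$ requires knowing that $\lambda(\tau,\sigma)\in L(\eta\Phi,\tau)$ --- i.e.\ the main theorem of complex multiplication --- together with the cocycle identity $\varepsilon_\Phi(\tau\sigma)=\varepsilon_{\eta\Phi}(\tau)\varepsilon_\Phi(\sigma)$. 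Your rewriting ``via the Hecke identity $\Xi\circ N_{F/E}=\psi^\alpha\ol\psi^\beta$'' is only valid for $\tau\in G_F$ (this is exactly the computation showing $t_\sigma$ is well defined, Proposition \ref{t_elements_independence}), not for the general case the theorem asserts.

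Second, and more seriously, the claim that the $t_\sigma$ are ``designed so that all members project to the same nonzero element'' is asserted, not proved, and it is the difficult part. The paper's argument needs three ingredients, none of which appears in your proposal: (a) after reducing to $\eta=1_E$ (via the already-proved transformation law) and observing the claim is independent of the choice of the $\gamma_\sigma$, one uses the Shimura--Taniyama reciprocity law (Theorem \ref{main_theorem_of_cm} together with Proposition \ref{properties_of_cosets}(ii)) to \emph{choose} the $\gamma_\sigma$ so that $\lambda_\sigma=\Phi^\ast(e_\sigma)^{-1}$ for ideles $e_\sigma$ with $r_E(e_\sigma)=\sigma|_{E^{\ab}}$; (b) one uses that $M(\Xi)$ is of type $(E,\Xi,T)$, so that $G_E$ acts on $I_f(b(\gamma_1))$ through $\Xi_a^{-1}\Xi$, which combined with (a) yields $b(\gamma_\sigma)=\lVert e_\sigma\rVert_E^{d(\beta)}\Xi(e_\sigma)\,b(\gamma_1)$ and hence $t_\sigma b(\gamma_\sigma)=b(\gamma_1)$ exactly because of how $\Xi$ was defined (the sign $\varepsilon_\Phi$ and the norm twist cancel); (c) one still must show $b(\gamma_1)\neq 0$, which follows because by (b) the vanishing of $b(\gamma_1)$ would force all $b(\gamma_\sigma)$ to vanish, contradicting that they span the one-dimensional $T$-vector space $M(\Xi)_B$. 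Your assertion that ``each individual summand projects isomorphically onto $M(\Xi)_B^\eta$'' is exactly point (c) and cannot be taken for granted: a priori the idempotent $e_\Xi$ could kill a given summand. Without (a)--(c) the non-vanishing is not established.
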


As the other main advantage of our approach, one sees that the values of the Eisenstein-Kronecker classes of Kings-Sprang, \cite{kingssprang}, naturally live in $F^{d(\alpha)+d(\beta)} RM(\Xi)_\dR$. We define a normalized class $\cEK$ in the de Rham realization of $h^1(A)^\alpha \otimes h^1(A^\vee)^\beta$ as certain linear combination of these classes and by adapting and slightly expanding the results of \cite{kingssprang}, we obtain:

\begin{theorem*}[Theorem \ref{the_main_theorem}]
For every $\sigma \in J_F$, one has
\[
I_\infty(\cEK^\sigma) = (2\pi i)^{d(\beta)} \cdot e(\sigma|_E) \cdot L(\chi,0) \cdot t_\sigma \cdot \gamma_\sigma^{\balpha} \otimes (\gamma_\sigma^\vee)^{\bbeta}
\]
in $(h^1(A)^\alpha \otimes h^1(A)^\beta)^\sigma_B \otimes \CC$.
\end{theorem*}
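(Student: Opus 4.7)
The strategy is to trace the Hodge comparison $I_\infty$ through the construction of $\cEK$ and invoke the main period formula from \cite{kingssprang}, suitably extended to accommodate the twists by $\alpha$ and $\beta$. By its definition, $\cEK$ is a normalized linear combination of the Eisenstein-Kronecker classes of \cite{kingssprang}, placed in $F^{d(\alpha)+d(\beta)}$ of the de Rham realization of $h^1(A)^\alpha \otimes h^1(A^\vee)^\beta$ via the presentation of $h^1(A)^\alpha$ as a direct summand of $\Sym^{d(\alpha)} R_{L/\QQ} h^1(A)$ developed in Section \ref{section_5_M_alpha}. The proof proceeds component-by-component: after tensoring with $\CC$, the de Rham realization decomposes as $\bigoplus_{\sigma \in J_F} (h^1(A^\sigma)^\alpha \otimes h^1(A^{\sigma,\vee})^\beta)_\dR$, and $\cEK^\sigma$ denotes the projection of $\cEK$ to the $\sigma$-summand.

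The first step is to make this projection explicit. Under the identification of $h^1(A^\sigma)^\alpha$ inside $\Sym^{d(\alpha)} R_{L/\QQ} h^1(A^\sigma)$, the class $\cEK^\sigma$ corresponds, up to the explicit normalizing factor $t_\sigma$, to a tensor product of Eisenstein-Kronecker differentials on the $L$-conjugates of $A^\sigma$, with exponents prescribed by $\alpha$ in the first factor and by $\beta$ in the second. Pairing this tensor against the Betti cycle dual to $\gamma_\sigma^{\balpha} \otimes (\gamma_\sigma^\vee)^{\bbeta}$ reduces the evaluation of $I_\infty(\cEK^\sigma)$ to a product of period integrals of the underlying Eisenstein-Kronecker differentials against lattice elements in the Lie algebra of $A^\sigma$.

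The second step is to apply the main period theorem of \cite{kingssprang}, which identifies these period integrals with $L(\chi,0)$. The factor $(2\pi i)^{d(\beta)}$ emerges from applying the canonical polarization pairing $h^1_B(A^\sigma) \otimes h^1_B(A^{\sigma,\vee}) \to (2\pi i)^{-1} L$ a total of $d(\beta)$ times in order to trade the de Rham dual against $\gamma_\sigma^\vee$. The idempotent $e(\sigma|_E)$ arises because $\cEK$ is intrinsically a class in $M(\Xi)_\dR$, a motive over $E$: its complexification decomposes along $T \otimes \CC = \prod_{\eta \in J_E} T \otimes_{E,\eta} \CC$, and the summand meeting the $\sigma$-component of the de Rham realization of $h^1(A)^\alpha \otimes h^1(A^\vee)^\beta$ is the one with $\eta = \sigma|_E$.

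The main obstacle is the bookkeeping needed to ensure that the precise linear combination defining $\cEK$ is such that the Kings-Sprang formula reconstructs the full $L$-value $L(\chi,0)$ and not just a partial Euler product, and that the sign character $\varepsilon_\Phi$ and the idele-norm twist $\lVert\cdot\rVert_E^{-d(\beta)}$ appearing in the definition of $\Xi$ are absorbed correctly into the factors $t_\sigma$. This requires carefully matching the sum over ideal classes implicit in the Kings-Sprang construction with the decomposition of $M(\Xi)$ supplied by the isomorphism (\ref{intro_motives_iso}), and verifying, through a somewhat lengthy computation tracking the normalizations of the various period pairings, that no extraneous powers of $2\pi i$ or algebraic periods survive.
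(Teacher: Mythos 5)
Your overall strategy coincides with the paper's: decompose the complexified de Rham realization over $\sigma \in J_F$, express $\cEK^\sigma$ through the Kings--Sprang analytic formula, and let the periods $\Omega_\sigma^{\iota\alpha}(\Omega_\sigma^\vee)^{\iota\beta}$ cancel. But there is a genuine gap in how you account for the factor $t_\sigma = \chi(\lambda_\sigma)^{-1}\varepsilon_\Phi(\sigma)$. You write that $\cEK^\sigma$ corresponds ``up to the explicit normalizing factor $t_\sigma$'' to a tensor of Eisenstein--Kronecker classes, and later that $\varepsilon_\Phi$ must be ``absorbed correctly into the factors $t_\sigma$'' as bookkeeping. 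This misplaces the source of $t_\sigma$: the definition of $\cEK$ contains no $\sigma$-dependent normalization at all, and $t_\sigma$ is an \emph{output} of the computation, produced by two separate and substantive mechanisms that your proposal never identifies. First, the sign $\varepsilon_\Phi(\sigma)$ arises from Proposition \ref{SignLemma}: conjugating the equivariant class $\EK^{\beta,\alpha}_{A,\Gamma}(f,x)$ by $\sigma$ permutes the CM-type $\Phi$ to $\sigma\Phi$, and the fundamental-class map of Proposition \ref{fundamental_class_prop} (the identification $H^{g-1}(\Gamma,\ZZ)\otimes\omega^g_{A/F}\cong\TSym^{1_\Phi}(\omega_{A/F})$ via an ordering of $\Phi$) changes by exactly this orientation sign. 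Second, the factor $\chi(\lambda_\sigma)^{-1}$ comes from the main theorem of complex multiplication: under $\sigma$ the torsion point $x_\frb$ is carried to a point described by the idele $\lambda_\sigma$, so the Eisenstein--Kronecker series on $A^\sigma(\frf\frb^{-1})$ computes the \emph{shifted} partial $L$-value $L_\frf(\chi_\iota,0,[\ell_\sigma\lambda_\sigma\frb])$ rather than $L_\frf(\chi_\iota,0,[\frb])$ (Theorem \ref{LValueProposition}), and only after reindexing the sum over $[\frb]\in I^\frf/P^\frf$ does the full $L$-value emerge, at the cost of the prefactor $\chi(\lambda_\sigma)^{-1}$.

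Neither mechanism is routine bookkeeping: the systematic control of Galois conjugates of Eisenstein--Kronecker classes is precisely the new content of Section \ref{section_7_ek_classes}, and without the sign lemma and the CM-theoretic shift of ideal classes your argument cannot produce $t_\sigma$ or distinguish it from $1$. A secondary imprecision: the ``main period theorem'' of \cite{kingssprang} (Theorem \ref{analytic_ek_classes}) yields Eisenstein--Kronecker series, i.e.\ \emph{partial} $L$-values attached to single ray classes; the full $L_\frf(\chi_\iota,0)$ appears only because $\cEK$ is by design the $\chi(\frb)$-weighted sum over representatives of $I^\frf/P^\frf$, together with the distribution relation removing the auxiliary ideal $\frc$. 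You flag this issue but do not resolve it, and resolving it is inseparable from the ideal-class shift above.
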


The de Rham realization of $RM(\Xi)$ simply is $M(\Xi)_\dR$ regarded as a $T$-vector space and in view of our construction, the latter is a direct factor in $(h^1(A)^\alpha \otimes h^1(A)^\beta)_\dR$. This allows to regard $\cEK$ as a class in $RM(\Xi)_\dR$ and the above theorem implies that
\[
I_\infty(\cEK) = \sum_{\sigma \in J_F} I_\infty(\cEK^\sigma) = (2 \pi i)^{d(\beta)} \cdot L(\chi,0) \cdot \sum_{\eta \in J_E} e(\eta) a_\eta.
\]
Now Blasius' theorem implies that $L(\chi,0)$ and $c^+(\chi)$ only differ by an element in $T$.

\subsection*{Outline of the paper}
In the first preliminary three sections, we recollect some facts from the theory of complex multiplication. In Section \ref{section_1_alg_hecke}, we review CM-types, algebraic Hecke characters and their associated $L$-functions. In Section \ref{section_2_CM_ab_var} we discuss CM abelian varieties and recall the main theorem of complex multiplication. Finally, in Section \ref{section_3_cm_motives}, the category of CM-motives and in particular its connection to algebraic Hecke characters is reviewed.

In Section \ref{section_5_M_alpha}, starting from a motive $M$ for a Hecke character $\psi$ with values in a number field $L$ and $\alpha \in I_L^+$, we give a detailed construction of the motive $M^\alpha$ for the Hecke character $\psi^\alpha$. 
Moreover, we discuss how to associate to Betti classes $\gamma$ (resp.\ de Rham classes $\omega$) of $M$ elements $\gamma^{\balpha}$ (resp.\ $\omega^{\balpha}$) in the Betti (resp.\ de Rham) realization of $M^\alpha$ and we express the resulting periods of $M^\alpha$ in terms of periods of $M$.
We formulate various decomposition results in terms of representations of a certain algebraic torus. While Galois descent is completely sufficient for these purposes, we choose this formalism for a more clean presentation.

In Section \ref{section_6_reflex_motive}, we start with a review of the reflex motive and Blasius' period relation. Various important objects, like the Hecke character $\Xi$, will be introduced. Via the results of \ref{section_5_M_alpha}, we then arrive at our alternative presentation of the reflex motive in terms of abelian varieties. Finally, we discuss the Betti realization in detail and provide the explicit presentation of the special basis elements $a_\eta$ in this new context.

In Section \ref{section_7_ek_classes}, we recall the definition of the Eisenstein-Kronecker classes of \cite{kingssprang}. The novel feature of this section is the systematic adaptation of the results of loc.\ cit.\ to also cover arbitrary Galois conjugates of Eisenstein-Kronecker classes. Along the way, we observe that the process of conjugation by $\sigma \in \Gal(\ol{\QQ}/\QQ)$ introduces the sign $\varepsilon_\Phi(\sigma)$. Moreover, the relation between the Eisenstein-Kronecker classes with special values of Hecke $L$-functions is extended to such conjugates.

Finally, in Section \ref{section_8_deligne_conj}, we show that the Eisenstein-Kronecker classes live naturally in the de Rham realization of the reflex motive. To any critical algebraic Hecke character we associate a normalized class $\cEK$ in the de Rham realization of the reflex motive and we reinterpret the value $L(\chi,0)$ as a period. Blasius' period relation and the main result of Section \ref{section_6_reflex_motive} are used to deduce Deligne's conjecture. 

\subsection*{Notations}
We assume throughout this text that number fields are embedded in $\CC$. We let $\ol{\QQ}$ denote the algebraic closure of $\QQ$ in $\CC$. In particular, complex conjugation on $\CC$ restricts to an automorphism of $\ol{\QQ}$ which we denote by the letter $c$ or $x \mapsto \ol{x}$ for $x \in \ol{\QQ}$.
For a number field $L$ we let $J_L \coloneqq \Hom(L,\CC) = \Hom(L, \ol{\QQ})$ denote the set of embeddings into $\CC$ (equivalently, into $\ol{\QQ}$) and let $I_L$ denote the free abelian group generated by $J_L$. 
We denote the fixed embedding of $L$ into $\CC$ by $1_L$. 
If $K \subset L$ is a subfield and $\sigma \in J_K$, we denote by $J_{L,\sigma}$ the set of those embeddings in $J_L$ which restrict to $\sigma$ on $K$ and we write $J_{L/K}$ instead of $J_{L, 1_K}$. 
For any element $\mu = \sum_{\sigma} \mu(\sigma) \sigma \in I_L$, we let $d(\mu) \coloneqq \sum_{\sigma} \mu(\sigma) \in \ZZ$ denote the \emph{degree of $\mu$}. For any subset $\Delta \subset J_L$, we write $I_\Delta \coloneqq \{\mu \in I_L \mid \mu(\sigma) = 0 \textrm{ for } \sigma \notin \Delta\}$ and $I_\Delta^+ \coloneqq \{\mu \in I_\Delta \mid \mu(\sigma) \geq 0 \textrm{ for all } \sigma \in \Delta\}$. For any integer $d \geq 0$, we also write $I^{+,d}_\Delta = \{\mu \in I^+_\Delta \mid d(\mu) = d\}$.

For a Galois extension $L/K$, we abbreviate its Galois group by $G_{L/K}$.
The absolute Galois group of a field $L$ will be denoted by $G_L$. 
For any number field $L$, we write $\AA_L$ for the ring of adeles and $\AA_L^\times$ for the group of ideles. We denote the ring of finite adeles by $L_f$ and the group of finite ideles by $L_f^\times$. Let $L_\infty = L \otimes \RR$ denote the archimedean part of $\AA_L$. The idele norm of $L$ will be denoted by $\lVert\cdot\rVert_L$. It is defined by $\lVert(x_v)\rVert_L = \prod_v |x_v|_v$, if $|\cdot|_v$ denotes the normalized absolute value on the completion $L_v$ for any place $v$ of $L$.
If $x \in L_f^\times$ is a finite idele, we denote its associated fractional ideal in $L$ by $[x]_L$.
For an integral ideal $\frf \subset \cO_L$, we let $I_L^\frf$ denote the group of fractional ideals of $L$ which are coprime to $\frf$. We let $P^\frf$ denote the subgroup of principal fractional ideals $(\alpha)$ with $\alpha \equiv 1 \mod \frf$ and $\alpha$ totally positive. 
If $L$ is a number field, we use the \emph{geometric} normalization of the Artin homomorphism and denote it by 
$r_L: \AA_L^\times \to G_L^\ab$.
If $L$ is totally complex, $r_L$ factors through $L_f^\times$.
We let
$\chi_\cyc: G_\QQ \to \widehat{\ZZ}^\times$
denote the cyclotomic character. Throughout this paper, we fix a square root $i$ of $-1$.

Let $R$ be a commutative ring and $M$ an $R$-module. For an integer $n \geq 0$, we let $\Sym^n M$ denote the $n$th symmetric powers, i.e.\ the coinvariants of $M^{\otimes n}$ under the permutation action of the symmetric group $S_n$, and we let $\TSym^n(M)$ denote the $R$-module of symmetric tensors in $M^{\otimes n}$, i.e.\ the $S_n$-invariants of $M^{\otimes n}$. 
For $m \in M$ we set 
\[
m^{[n]} \coloneqq m \otimes \ldots \otimes m \in \TSym^n(M).
\]
We have the canonical map 
\begin{equation}\label{TSymSym}
\psi: \TSym^n(M) \subset M^{\otimes n} \twoheadrightarrow \Sym^n(M)
\end{equation}
and a canonical morphism of graded algebras
$\Sym^\bullet(M) \to \TSym^\bullet(M)$,
given by the symmetrization map in each degree.
One checks that $\varphi \circ \psi$ and $\psi \circ \varphi$ are given by multiplication by $n!$. Hence, if $R$ is a $\QQ$-algebra, the map (\ref{TSymSym}) is an isomorphism that sends $m^{[n]}$ to $m^n$. The drawback of (\ref{TSymSym}) is that it does not define a morphism of graded algebras. Still, let us fix the convention that we identify $\TSym$ and $\Sym$ via (\ref{TSymSym}). The reason is that we can avoid some denominators in Definition \ref{def_modified_ek_class}.

\subsection*{Acknowledgements}
This paper is a simplified and more concise version of my PhD thesis at the University of Regensburg \cite{thesis}. I would like to thank my advisor Guido Kings for his constant support throughout the last years. I am also grateful to Johannes Sprang for helpful discussions about Eisenstein-Kronecker classes. It is a pleasure to thank Don Blasius for many valuable comments. Moreover, it should become evident to the reader how much this paper owes to the ideas developed in \cite{blasius}. This work was supported by the SFB 1085 "Higher Invariants" funded by the Deutsche Forschungsgesellschaft (DFG).

\section{Algebraic Hecke characters}\label{section_1_alg_hecke}
In this section, we recall basic definitions and facts about CM-types, algebraic Hecke characters and Hecke $L$-functions.

\subsection{CM-types}\label{section_cm_types}
In the following, let $L$ and $T$ be number fields. Recall that $J_L \coloneqq \Hom(L,\ol{\QQ})$ denotes the set of embeddings into the fixed algebraic closure $\ol{\QQ}$ and that $I_L$ denotes the free abelian group generated by $J_L$. For an element $\mu \in I_L$, we write
$\mu = \sum_{\sigma \in J_L} \mu(\sigma) \sigma$
with $\mu(\sigma) \in \ZZ$ and, for any $\ell \in L$, we set
\[
\ell^\mu \coloneqq \prod_{\sigma \in J_L} \sigma(\ell)^{\mu(\sigma)} \in \ol{\QQ}.
\]
For any extension of number fields $L \supset K$ and $\mu \in I_K$, we define $N_{L/K}^\ast(\mu) \in I_L$ by 
$N_{L/K}^\ast(\mu)(\sigma) = \mu(\sigma|_K)$
for all $\sigma \in J_L$.
There is a linear and continuous left action of $G_\QQ$ on $I_L$ by the rule $(\tau \mu)(\sigma) = \mu(\tau^{-1} \sigma)$ for $\tau \in G_\QQ$ and $\sigma \in J_L$. We let $L^\mu$ denote the fixed field of the stabilizer of $\mu$. As the notation suggests, $L^\mu$ is the number field generated by the elements $\ell^\mu$ for $\ell \in L$.

Recall that a CM-field is a number field $K$ which is a imaginary quadratic extension of a totally real field. Equivalently, $K$ admits a non-trivial automorphism $c_K$ such that $\sigma \circ c_K = c \circ \sigma$ for all embeddings $\sigma \in J_K$. The composite of CM-fields (and in particular the Galois closure) of CM-fields is again a CM-field. A \emph{CM-type} of a CM-field $K$ is a subset $\Phi \subset J_K$ such that 
\[
\Phi \cup \ol{\Phi} = J_K \;\;\; \textrm{ and } \;\;\; \Phi \cap \ol{\Phi} = \emptyset.
\]
Usually, by considering its characteristic function, we also regard $\Phi$ as an element in $I_K$.
More generally, let $L$ be a number field containing a CM-field and let $K$ denote its maximal CM subfield. We say that a subset $\Phi \subset J_L$ is a CM-type if there exists a CM-type $\Phi_K$ of $K$ such that 
$\Phi = N_{L/K}^\ast(\Phi_K)$.
The \emph{reflex field of $(L,\Phi)$} is defined as 
$E \coloneqq L^\Phi$.
Let us recall the definition of the reflex CM-type. We choose a Galois extension $L'$ of $\QQ$ containing $L$ and $E$ and consider the induced CM-type $\Phi' = N_{L'/L}^\ast(\Phi)$. We then define $(\Phi')^{-1} \in I_{L'}$ by the rule $(\Phi')^{-1}(\sigma) = \Phi'(\sigma^{-1})$ for all $\sigma \in J_{L'}$, where we identify $J_{L'}$ with $\Gal(L'/\QQ)$ via our fixed embedding. One can show that there exists a unique CM-type $\Phi^\ast$ of $E$ satisfying
\[
N_{L'/E}^\ast (\Phi^\ast) = (\Phi')^{-1}
\]
and the pair $(E,\Phi^\ast)$ is minimal with respect to this property. The CM-type $\Phi^\ast$ is independent of the choice of $L'$ and is called the \emph{reflex CM-type of $(L,\Phi)$}.

\subsection{Algebraic Hecke characters}
As before, let $L$ and $T$ be number fields. 
A group homomorphism $\theta: L^\times \to T^\times$ is called \emph{algebraic} if it is induced from a homomorphism of algebraic tori $\Res_{L/\QQ} \Gm \to \Res_{T/\QQ}\Gm$ on $\QQ$-valued points. Equivalently, there exists an element $\mu \in I_L$, which is invariant under the action of $G_T$, such that 
$\theta(\ell) = \ell^\mu$
for all $\ell \in L^\times$.
In the following, we equip $T$ with the discrete topology.
An \emph{algebraic Hecke character of $L$ with values in $T$} is a continuous group homomorphism $\chi: \AA_L^\times \to T^\times$, whose restriction $\chi|_{L^\times}: L^\times \to T^\times$ is an algebraic homomorphism. The corresponding element in $I_L$ is denoted by $\chi_a$ and called the \emph{infinity-type of $\chi$.} 

Note that if $L$ is totally imaginary, the archimedean part $L_\infty^\times \coloneqq (L \otimes \RR)^\times$ is connected and hence $\chi$ factors as a continuous character $\chi: L_f^\times \to T^\times$ of the finite ideles. In this case the idele norm character $\lVert \cdot \rVert_L : L_f^\times \to \QQ^\times$ is an algebraic Hecke character of infinity-type $- \sum_{\sigma \in J_L} \sigma$. Also note that by class field theory the Hecke characters with trivial infinity-type correspond precisely to the finite Galois characters $G_L^\ab \to T^\times$.

\begin{remark}
Hecke character as defined above appear in this form in \cite{serretate} and are sometimes referred to as Serre-Tate characters in the literature. Often Hecke characters are defined as certain quasicharacters of the idele class group $\AA_L^\times/L^\times$ and for algebraic Hecke characters the two definitions are equivalent: We can regard an algebraic Hecke character $\chi: \AA_L^\times \to T^\times$ as having values in $\CC^\times$ via our fixed embedding $T \subset \CC$ and the infinity-type $\chi_a$ induces a continuous homomorphism $\chi_\infty: L_\infty^\times \to T_\infty^\times \to \CC^\times$, where the latter map is again induced by our embedding. Then $\chi \chi_\infty^{-1}$ descends to a quasi-character of the idele class group.
\end{remark}

Let us briefly discuss the relationship with the classical definition of Hecke characters in terms of ideals. Let $\chi: \AA_L^\times \to T^\times$ be an algebraic Hecke character. By continuity, there exists an integral ideal $\frf \subset \cO_L$ such that the kernel of $\chi$ contains the open subgroup 
\[
U_\frf = (L_\infty^\times)^0 \times \prod_{v \mid \frf} (1 + \frf \cO_{L_v}) \times \prod_{v \nmid \frf \infty} \cO_{L_v}^\times.
\]
In this case, we say that \emph{$\chi$ is of conductor dividing $\frf$}. Let $S = {v \mid \frf \infty}$ and $\AA_L^{S,\times}$ be the ideles away from $S$. Recall that $I^\frf$ denote the group of fractional ideals of $L$ that are coprime to $\frf$.
Via the surjection $[\cdot]_L : \AA_L^{S,\times} \to I^\frf$, which maps an idele to its associated fractional ideal, the restriction of $\chi$ to $\AA_L^{S,\times}$ factors as a homomorphism 
\[
\chi': I^\frf \to T^\times
\] 
satisfying $\chi'((\alpha)) = \chi_a(\alpha)$ for all $(\alpha) \in P^\frf$, i.e.\ $\chi'$ is an algebraic Hecke character in the classical sense. Conversely, by weak approximation, any such homomorphism defines a unique algebraic Hecke character of $L$ with values in $T$. Note that the idele norm character $\lVert \cdot \rVert_L$ corresponds to the inverse of the norm character $N_{L/\QQ}^{-1} : I^\frf \to \QQ^\times$.

It follows from the definition that the infinity-type of an algebraic Hecke character becomes trivial on a finite index subgroup $\Gamma$ of $\cO_L^\times$. We refer to elements in $I_L$ with this property as being of \emph{Hecke character type with respect to $\Gamma$}.
It was shown by Weil that $\mu \in I_L$ is of Hecke character type with respect to some $\Gamma$ if $\mu$ is induced from either a totally real or CM subfield of $L$ and if there exists an integer $w \in \ZZ$ such that $\mu(\sigma) + \mu(\ol{\sigma}) = w$ for all $\sigma \in J_L$. 
As an important consequence any algebraic Hecke character $\chi$ of $L$ satisfies
\[
\chi \cdot \ol{\chi} = \lVert \cdot \rVert_L^{-w}
\]
for some integer $w \in \ZZ$, the \emph{weight of $\chi$}.

\begin{remark}
In fact, one obtains the following classification of algebraic Hecke characters (see e.g.\ \cite{schappacher}, §0, section 3):
If $L$ does not contain a CM-field, then $\chi$ is of the form
$\chi = \varrho \cdot  \lVert \cdot \rVert_L^{-w/2}$,
where $\varrho$ is a finite Hecke character, $\lVert \cdot \rVert$ is the idele norm character and $w \in 2\ZZ$ is an even integer.
On the other hand, if $L$ contains a CM-field and $K \subset L$ is the maximal CM-subfield, then $\chi$ is of the form
$\chi = \varrho \cdot (\varphi \circ N_{L/K})$,
where $\varrho$ is a finite Hecke character and $\varphi$ is an algebraic Hecke character of $K$.
\end{remark}

In this paper, we will be mainly concerned with Hecke characters of the latter type where the field of definition $L$ contains a CM-field.

\begin{definition}\label{criticality_def}
Let $L$ be a number field which contains a CM-field. An element $\mu \in I_L$ is called \emph{critical} if $\mu$ is of Hecke character type for some finite index subgroup $\Gamma \subset \cO_L^\times$ and if there exist a CM-type $\Phi$ of $L$ such that $\mu$ can be decomposed in the form
\[
\mu = \beta - \alpha
\] 
with $\alpha \in I_{\Phi}$ and 
$\beta \in I_{\ol{\Phi}}$ satisfying
\[
\alpha(\sigma) \geq 1 \;\;\; \textrm{ and } \;\;\; \beta(\ol{\sigma}) \geq 0 \;\;\; \textrm{ for all } \sigma \in \Phi.
\]
We say that an algebraic Hecke character is critical if its infinity-type is critical.
\end{definition}

By the above classification, $\alpha$ and $\beta$ in the definition above are induced from the maximal CM subfield of $L$.
Also note that the CM-type $\Phi$ is uniquely determined. To emphasize its role, we sometimes say that $\mu$ (or $\chi$) is critical with respect to $\Phi$. For a finite index subgroup $\Gamma \subset \cO_L^\times$, we let $\Crit_L(\Gamma)$ denote the set of all $\mu \in I_L$ which are critical and of Hecke character type with respect to $\Gamma$.

\subsection{Hecke $L$-functions}
We now discuss Hecke $L$-functions and their partial versions. Let $\frf \subset \cO_L$ be an integral ideal such that we can regard $\chi$ as a homomorphism $\chi: I^\frf \to T^\times$.
For any embedding $\iota \in J_T$, we put $\chi_\iota \coloneqq \iota \circ \chi$ and define the Hecke $L$-function
\begin{align}\label{hecke_l_function_def}
L_\frf(\chi_\iota, s) \coloneqq \sum_{\substack{\fra \in I^\frf \\ \fra \subset \cO_L}} \frac{\chi_\iota(\fra)}{N\fra^s} 
= \prod_{\frp \nmid \frf} (1 - \chi_\iota(\frp) N\frp^{-s})^{-1}
\end{align}
for all $s \in \CC$ with $\Re(s) > \frac{w}{2}+1$, where $w$ denotes the weight of $\chi$. The $L$-series $L_\frf(\chi,s)$ admits a meromorphic continuation to $\CC$ and satisfies a functional equation.
For every fractional ideal $\frb \in I^\frf$ we denote its corresponding ray class modulo $\frf$ by $[\frb]$. We define the partial $L$-functions	
\[
L_\frf(\chi_\iota, s, [\frb]) \coloneqq \sum_{\substack{\fra \in [\frb] \\ \fra \subset \cO_L}} \frac{\chi_\iota(\fra)}{N\fra^s}
\]
for $\Re(s) > \frac{w}{2}+1$, which also admit meromorphic continuations to $\CC$.
Finally, we use the identification $T \otimes \CC \cong \CC^{J_T}$, $t \otimes z \mapsto (\iota(t) z)_{\iota \in J_T}$ in order to define a $T \otimes \CC$-valued $L$-function
\[
L_\frf(\chi,s) \coloneqq (L_\frf(\chi_\iota,s))_{\iota \in J_T} 
\]
and similarly we define its partial analogue $L_\frf(\chi,s,[\frb])$
for all $\frb \in I^\frf$.

\section{Abelian varieties with complex multiplication}\label{section_2_CM_ab_var}
To fix definitions, we recollect some basics about abelian varieties with complex multiplication. We also recall Serre's tensor construction and discuss the main theorem of complex multiplication.

\subsection{CM abelian varieties}
In this section, let $L$ denote a totally imaginary number field of degree $[L:\QQ] = 2g$ which contains a CM-field. Let $F$ be a number field.
An \emph{abelian variety with complex multiplication by $L$} is an abelian variety $A/F$ of dimension $g$ equipped with an embedding 
\[
i: L \into \End^0_F(A) \coloneqq \End_F(A) \otimes_\ZZ \QQ.
\]
If $(A/F,i)$ is an abelian variety with complex multiplication by $L$, the ring
\[
L \cap \End_F(A) = i^{-1}(\End_F(A))
\]
is an order in $L$. If it is the maximal order $\cO_L$, we say that \emph{$(A,i)$ has complex multiplication by $\cO_L$}. We usually drop the embedding $i$ from the notation.
Via the CM-structure, the singular cohomology $H^1(A(\CC),\CC)$ is a free $L \otimes \CC$-module of rank $1$. It follows from the Hodge decomposition theorem that there exists a CM-type $\Phi$ of $L$ such that $L$ acts through the $g$ embeddings in $\Phi$ on the Lie algebra $\Lie(A/F)$. We say that \emph{$A/F$ is of CM-type $(L,\Phi)$}. 
We equip the dual abelian variety $A^\vee$ of $A$ with with the induced CM-structure 
\[
L \to \End_F(A) \xto{(-)^\vee} \End_F(A^\vee).
\]
With respect to this CM-structure the dual abelian variety $A^\vee$ is of CM-type $(L,\ol{\Phi})$.

Let $\pi: A \to \Spec(F)$ be an abelian variety with complex multiplication by $L$ and let $e$ denote its unit section.
We write
\[
\omega_{A/F} \coloneqq e^\ast \Omega^1_{A/F} \cong \pi_\ast \Omega^1_{A/F}
\]
for the co-Lie algebra of $A/F$ and abbreviate $\omega_{A/F}^g = \Lambda^g \omega_{A/F}$.
For an isogeny $\varphi: A \to B$ over $F$, we have homomorphisms $\varphi^\ast: \omega_{B/F} \to \omega_{A/F}$ and 
\[
\varphi_{\#} \coloneqq (\varphi^\vee)^\ast: \omega_{A^\vee/F} \to \omega_{B^\vee/F}
\] 
by functoriality. Since $F$ is a field of characteristic zero, all isogenies over $F$ are étale, so that $\varphi^\ast$ and $\varphi_{\#}$ are isomorphisms.
We let $H^1_\dR(A/F)$ (or sometimes $h^1_\dR(A)$) denote the first algebraic de Rham cohomology of $A/F$. 
Via the CM-structure, $\omega_{A/F}$, $\Lie(A/F)$ and $H^1_\dR(A/F)$ acquire the structures of $L \otimes F$-modules.
There is a canonical isomorphism $R^1\pi_\ast \cO_A \cong \Lie(A^\vee/F)$ and an exact sequence
\[
0 \to \omega_{A/F} \to H^1_\dR(A/F) \to \Lie(A^\vee/F) \to 0
\]
of $L \otimes F$-modules.

\subsection{Serre's tensor construction}\label{section_serre_tensor}
Let us collect a few facts about Serre's tensor construction. More details can for example be found in \cite{CCO}, 1.7.4.
Let $A/F$ be an abelian variety with CM by $\cO_L$. For every finitely generated projective $\cO_L$-module $\fra$, there is an abelian variety 
$A(\fra) \coloneqq A \otimes_{\cO_L} \fra$
defined over $F$ which represents the functor
\[
\Sch_F^\op \to \Ab, \;\;\; (A \otimes_{\cO_L} \fra)(T) = A(T) \otimes_{\cO_L} \fra.
\]
If $\fra, \frb$ are fractional ideals of $L$ with $\fra \subset \frb$, one has a canonical isogeny $A(\fra) \to A(\frb)$
of degree $[\frb:\fra]$. In the case where $\fra \subset \cO_L$ is an integral ideal, we denote the isogeny induced by the inclusion $\cO_L \subset \fra^{-1}$ by 
\[
[\fra]: A \to A(\fra^{-1})
\]
and define
$A[\fra] \coloneqq \ker([\fra])$.

\begin{remark}\label{serretensorCM}
If $K$ is a CM-subfield of $L$ and $B$ an abelian variety with complex multiplication by $\cO_K$ then $B \otimes_{\cO_K} \cO_L$ canonically acquires complex multiplication by $\cO_L$.
Conversely, let $A/F$ be an abelian variety with CM by $\cO_L$. It follows (cf.\ \cite{CCO}, Theorem 1.3.1.1) that $A$ is isotypical, i.e.\ $F$-isogenous to $B^n$ where $B/F$ is a simple abelian variety of CM-type. Then the center of $\End^0(B)$ is a CM-field $K$ with $[L:K] = n$. We may assume that $B$ has CM by $\cO_K$. Via
$K \subset M_{n \times n}(\End^0(B)) \cong \End^0(A)$,
one obtains $K \subset L$ as $L$ is its own centralizer in $\End^0(A)$. This way one obtains an $\cO_L$-linear $F$-isogeny $A \simeq B \otimes_{\cO_K} \cO_L$.
\end{remark}

\subsection{The main theorem of complex multiplication}
In the following, let $L$ denote a number field containing a CM-field and let $\Phi$ be a CM-type of $L$. Let $E$ denote the reflex field of $(L,\Phi)$ and let $\Phi^\ast \in I_E$ be the reflex CM-type. For each embedding $\sigma \in J_L$, we choose a lift $w_\sigma \in G_\QQ$ such that $cw_\sigma = w_{c\sigma}$. We define the type transfer
$\tr_\Phi: G_\QQ \to G_L^\ab$
by the formula
\[
\tr_{L,\Phi}(\tau) = \prod_{\sigma \in \Phi} (w_{\tau \sigma}^{-1} \tau w_\sigma)^{-1} \mod G_L^c,
\]
where $G_L^c$ denotes the topological closure of the commutator subgroup of $G_L$. The type transfer satisfies the following properties:
 
\begin{proposition}\label{properties_type_transfer} 
The map $\tr_{L,\Phi}$ is a well-defined, continuous and independent of the choice of lifts $w_\sigma$ for $\sigma \in J_L$. It satisfies the following properties:
\begin{enumerate}
\item For all $\tau_1, \tau_2 \in G_\QQ$, one has $\tr_{L,\Phi}(\tau_1 \tau_2) = \tr_{L,\Phi \circ \tau_2^{-1}}(\tau_1) \cdot \tr_{L,\Phi}(\tau_2)$.
\item Let $L' \supset L$ be a finite extension and $\Phi' = N_{L'/L}^\ast(\Phi)$. Let $\tr_{L'/L}: G_L^\ab \to G_{L'}^\ab$ denote the usual transfer map. One has $\tr_{L',\Phi'} = \tr_{L'/L} \circ \tr_{L,\Phi}$.
\item The restriction $\tr_{L,\Phi}: G_E \to G_L^\ab$ is a homomorphism and the diagram
\[
\begin{tikzcd}
E_f^\times \arrow{r}{({\Phi^\ast})^{-1}} \arrow{d}[swap]{r_K} & L_f^\times \arrow{d}{r_L} \\
G_E^\ab \arrow{r}{\tr_{L,\Phi}} & G_L^\ab
\end{tikzcd}
\]
is commutative.
\end{enumerate}
\end{proposition}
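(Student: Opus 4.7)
The plan is to verify the four assertions in order; the main obstacle will be the commutativity of the diagram in (iii), which encodes the classical reciprocity law for the reflex norm. First I would handle well-definedness, continuity and independence of lifts. Since $w_{\tau\sigma}$ and $\tau w_\sigma$ are both lifts of $\tau\sigma$, the factor $w_{\tau\sigma}^{-1}\tau w_\sigma$ lies in $G_L$, and continuity is automatic from continuity of multiplication in $G_\QQ$ and finiteness of $\Phi$. For independence, an alternative compatible system has the form $w_\sigma' = w_\sigma\gamma_\sigma$ with $\gamma_\sigma \in G_L$; the constraint $cw_\sigma' = w_{c\sigma}'$ forces $\gamma_\sigma = \gamma_{c\sigma}$. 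Working modulo $G_L^c$, the change in the product reduces to comparing $\prod_{\sigma\in\Phi}\gamma_\sigma$ with $\prod_{\sigma\in\tau\Phi}\gamma_\sigma$, which agree because $c$ provides a bijection $\Phi \setminus \tau\Phi \leftrightarrow \tau\Phi \setminus \Phi$.

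The properties (i) and (ii) are formal consequences of the defining formula. For (i), the telescoping
\[
w_{\tau_1\tau_2\sigma}^{-1}(\tau_1\tau_2)w_\sigma = \bigl(w_{\tau_1(\tau_2\sigma)}^{-1}\tau_1 w_{\tau_2\sigma}\bigr)\bigl(w_{\tau_2\sigma}^{-1}\tau_2 w_\sigma\bigr),
\]
combined with the reindexing $\sigma' = \tau_2\sigma$ of the first group of factors (which sweeps out the CM-type $\Phi \circ \tau_2^{-1}$), yields the cocycle identity after taking inverses and multiplying over $\sigma \in \Phi$. For (ii), I would pick coset representatives $v_{\sigma'}$ of $G_{L'}$ in $G_L$ indexed by $\sigma' \in J_{L'}$ lying over a given $\sigma \in J_L$ and use the lifts $w_{\sigma'} \coloneqq w_\sigma v_{\sigma'}$. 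Expanding $\tr_{L',\Phi'}(\tau)$ in these lifts and regrouping so that the outer product runs over $\sigma \in \Phi$, the outer factor reproduces $\tr_{L,\Phi}(\tau)$ while the inner factor is exactly the Verlagerung formula for the group-theoretic transfer $G_L^{\ab} \to G_{L'}^{\ab}$.

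For (iii), the homomorphism assertion is immediate: for $\tau \in G_E$ one has $\tau\Phi = \Phi$ as elements of $I_L$, because $E = L^\Phi$ is by definition the fixed field in $G_\QQ$ of the stabilizer of $\Phi$, so the cocycle identity of (i) collapses to multiplicativity. The commutativity of the diagram is the substantive point, and my plan is to use continuity and density of principal ideles to reduce to checking the identity on $E^\times \subset E_f^\times$: the lower-left route evaluates via the geometric Artin reciprocity on principal ideles, while the upper-right route uses the explicit action of $(\Phi^\ast)^{-1}$ on $E^\times$ combined with the defining relation $N_{L'/E}^\ast(\Phi^\ast) = (\Phi')^{-1}$ in a Galois closure $L'$. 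After extending scalars to $L'$ via (ii), the comparison reduces to a combinatorial identity relating the reflex CM-type to the chosen lifts, essentially the Shimura-Taniyama relation; this can either be verified by direct bookkeeping with the $w_\sigma$ or invoked from the classical theory of complex multiplication (e.g.\ from Lang's \emph{Complex Multiplication}). This last step is the main technical obstacle.
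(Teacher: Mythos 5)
Your treatment of well-definedness, continuity, independence of the lifts, and of parts (i) and (ii) is correct and is exactly the content the paper compresses into ``a straight-forward consequence of the definitions''; likewise the observation that $\tau\Phi=\Phi$ for $\tau\in G_E$ collapses the cocycle identity to multiplicativity is the right reason the restriction to $G_E$ is a homomorphism. One small caveat on your independence argument: an arbitrary alternative system of lifts need not be of the form $w_\sigma'=w_\sigma\gamma_\sigma$ with $\gamma_\sigma=\gamma_{c\sigma}$ exactly; the constraint $cw_\sigma'=w_{c\sigma}'$ only forces $\gamma_{c\sigma}=w_{c\sigma}^{-1}cw_\sigma\gamma_\sigma$, so the bookkeeping is slightly more delicate than stated, though the conclusion is unaffected.

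The genuine gap is in your primary plan for the commutativity of the diagram in (iii). The reduction ``by continuity and density of principal ideles to checking the identity on $E^\times\subset E_f^\times$'' cannot work, for two independent reasons. First, $E^\times$ is not dense in $E_f^\times$: for a totally imaginary $E$ the quotient of $E_f^\times$ by the closure of $E^\times$ is essentially $G_E^{\ab}$, which is a nontrivial profinite group, so agreement on $E^\times$ does not propagate to all of $E_f^\times$. Second, even if it did, the check on $E^\times$ is vacuous: $r_E$ kills $E^\times$ by reciprocity, and since $(\Phi^\ast)^{-1}$ maps $E^\times$ into $L^\times$, which $r_L$ also kills, both composites are trivial on principal ideles and the comparison carries no information. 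The actual content of (iii) is the Shimura--Taniyama reciprocity law, which cannot be extracted from the behaviour on global elements. Your fallback --- reduce to the CM case and invoke the classical theory --- is in fact exactly what the paper does: it uses part (ii) (compatibility with the transfer) to reduce to the case where $L$ is a CM-field and then cites Lang, \emph{Complex Multiplication}, Ch.~7, Theorem~1.1. So the proposal reaches the right destination, but only via the citation; the announced reduction to $E^\times$ should be discarded.
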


\begin{proof}
Except for (iii), the proposition is a straight-forward consequence of the definitions. In the setting of (ii), the transfer map is injective and hence one can reduce assertion (iii) to the case where $L$ is a CM-field. A proof can now be found in \cite{lang}, Ch.7, Theorem 1.1.
\end{proof}

If $L$ is a CM-field and $\tau \in G_\QQ$, we define 
\begin{align}\label{coset_cm_case}
L(\Phi, \tau) \coloneqq \{\lambda \in L_f^\times \mid r_L(\lambda) = \tr_{L,\Phi}(\tau) \textrm{ and } \lambda \lambda^c \chi_\cyc(\tau) \in L^\times\}.
\end{align}
The set $L(\Phi,\tau)$ forms an $L^\times$-coset in $L_f^\times$, cf.\ \cite{lang}, Ch.7, Theorem 2.2. 
If, more generally, $L$ is a totally imaginary field containing a CM-field $K$ and $\Phi$ is a type of $L$, which is lifted from a CM-type $\Phi_K$ of $K$, we define
\begin{align}\label{modified_coset}
L(\Phi,\tau) \coloneqq L^\times \cdot K(\Phi_K,\tau).
\end{align}
By Proposition \ref{properties_type_transfer} (iii) and the compatibility of the Artin map with the transfer, this definition is independent of the choice of $(K,\Phi_K)$ and we have 
$r_L(\lambda) = \tr_{L,\Phi}(\tau)$
for any $\lambda \in L(\Phi,\tau)$.
We record two properties of the cosets (\ref{modified_coset}) for later use:

\begin{proposition}\label{properties_of_cosets}
Let $K$ be a CM-field and $\Phi_K$ a CM-type. Let $L$ be a number field containing $K$ and let $\Phi$ be the CM-type lifted from $\Phi_K$. Let $E$ denote the reflex field of $(L,\Phi)$.
\begin{enumerate}
\item For any $\tau_1, \tau_2 \in G_\QQ$, one has $L(\Phi, \tau_1 \tau_2) = L(\Phi \circ \tau_2^{-1}, \tau_1) \cdot L(\Phi, \tau_2)$.
\item If $\tau \in G_E$ and $e \in E_f^\times$ satisfies $r_E(e) = \tau|_{E^\ab}$, one has
$L(\Phi,\tau) = L^\times \cdot \Phi^\ast(e)^{-1}$.
\end{enumerate}
\end{proposition}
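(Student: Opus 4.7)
My plan is to reduce both assertions to the case where $L = K$ is itself a CM-field, via the definition (\ref{modified_coset}). Since $\Phi = N_{L/K}^\ast(\Phi_K)$, a direct verification shows $\Phi \circ \tau_2^{-1} = N_{L/K}^\ast(\Phi_K \circ \tau_2^{-1})$, so both sides of (i) take the form $L^\times \cdot (\text{a single } K^\times\text{-coset in } K_f^\times)$; for (ii), one notes that $\Phi^\ast(e) \in K_f^\times$ and $K^\times \subset L^\times$, so that the identity also reduces to one of $K^\times$-cosets inside $K_f^\times$. Crucially, every set appearing is a single $K^\times$-coset of $K_f^\times$, so proving the desired equality amounts to exhibiting one common element.

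In the CM case, part (i) is obtained from pure multiplicativity. Proposition \ref{properties_type_transfer}(i) yields the type transfer identity $\tr_{K,\Phi_K}(\tau_1 \tau_2) = \tr_{K,\Phi_K \circ \tau_2^{-1}}(\tau_1) \cdot \tr_{K,\Phi_K}(\tau_2)$, and the cyclotomic character is obviously multiplicative. Hence if $\lambda_1 \in K(\Phi_K \circ \tau_2^{-1}, \tau_1)$ and $\lambda_2 \in K(\Phi_K, \tau_2)$, both defining conditions of $K(\Phi_K, \tau_1 \tau_2)$ are satisfied by $\lambda_1 \lambda_2$, yielding the desired equality of $K^\times$-cosets.

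For (ii), after reduction to the CM case it suffices to verify that $\Phi^\ast(e)^{-1}$ satisfies both defining conditions of $K(\Phi,\tau)$. The Artin-map condition $r_K(\Phi^\ast(e)^{-1}) = \tr_{K,\Phi}(\tau)$ is immediate from Proposition \ref{properties_type_transfer}(iii) together with the observation that $\tr_{K,\Phi}|_{G_E}$, being a homomorphism into an abelian group, factors through $G_E^\ab$, so that $\tr_{K,\Phi}(\tau) = \tr_{K,\Phi}(\tau|_{E^\ab}) = \tr_{K,\Phi}(r_E(e))$. The delicate step, which I expect to be the main obstacle, is the $\chi_\cyc$-condition $\lambda \lambda^c \chi_\cyc(\tau) \in K^\times$. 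My plan is to first establish the adelic identity $\Phi^\ast(e) \cdot \overline{\Phi^\ast(e)} = N_{E/\QQ}(e)$ in $\QQ_f^\times$, which on global $x \in E^\times$ follows from $\Phi^\ast + \overline{\Phi^\ast} = \sum_{\eta \in J_E} \eta$ and then extends by continuity to finite ideles. This turns the condition into $N_{E/\QQ}(e)^{-1} \chi_\cyc(\tau) \in \QQ^\times$. Combining the class field theoretic compatibility $r_\QQ \circ N_{E/\QQ} = \mathrm{res} \circ r_E$ with the fact that, under the geometric normalization of $r_\QQ$, the map $\chi_\cyc \circ r_\QQ$ restricts to the identity on the $\hat{\ZZ}^\times$-factor of the decomposition $\QQ_f^\times = \QQ_{>0}^\times \cdot \hat{\ZZ}^\times$, one reads off that the unit component of $N_{E/\QQ}(e)$ is precisely $\chi_\cyc(\tau)$. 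Thus the ratio $N_{E/\QQ}(e)/\chi_\cyc(\tau)$ lies in $\QQ_{>0}^\times \subset K^\times$, which is what we need. The only genuinely delicate point is keeping the sign conventions straight: under the arithmetic normalization the same computation would produce $\chi_\cyc(\tau)^{-1}$ in place of $\chi_\cyc(\tau)$, and the identity would fail.
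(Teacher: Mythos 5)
Your proposal is correct and follows essentially the same route as the paper: reduce to the CM case via the definition of $L(\Phi,\tau)$ as $L^\times\cdot K(\Phi_K,\tau)$, then deduce (i) from the cocycle property of the type transfer and (ii) from the commutative diagram relating $(\Phi^\ast)^{-1}$ to $\tr_{L,\Phi}$, using that each side is a single $K^\times$-coset. The paper's proof is a one-liner that leaves the verification of the condition $\lambda\lambda^c\chi_\cyc(\tau)\in K^\times$ implicit (it is absorbed into the citation of Lang), whereas you check it explicitly via $\Phi^\ast\cdot\ol{\Phi^\ast}=N_{E/\QQ}$ and the normalization of the Artin map; that computation, including the sign caveat about the geometric normalization, is correct.
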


\begin{proof}
The assertions immediately reduce to the case where $L$ is a CM-field. Statement (i) follows from Proposition \ref{properties_type_transfer} (ii) and assertion (ii) follows from Proposition \ref{properties_type_transfer} (iv).
\end{proof}

For the statement of the following version of the main theorem of complex multiplication, we use the notations $h^1_B(A) \coloneqq H^1(A(\CC),\QQ)$ and $h^1_f(A) \coloneqq (\varprojlim_n H^1_\et(A \times_F \ol{F},\ZZ/n\ZZ)) \otimes_\ZZ \QQ$ for Betti and finite adelic cohomology, cf.\ Section \ref{section_absolute_hodge}.

\begin{theorem}[Shimura-Taniyama, Tate, Langlands, Deligne] \label{main_theorem_of_cm}
Let $F$ be a number field and $A/F$ be an abelian variety of CM-type $(L,\Phi)$. Let $a \in h^1_B(A)$ be an $L$-basis. Let $\tau \in G_\QQ$ and $\lambda \in L(\Phi,\tau)$. Then there exists a unique $L$-basis $b \in h^1_B(A^\tau)$ such that the diagram
\[
\begin{tikzcd}
L_f \arrow{r}{a_f} \arrow{d}[swap]{\lambda} & h^1_f(A) \arrow{d}{\tau} \\
L_f \arrow{r}{b_f} & h^1_f(A^\tau)
\end{tikzcd}
\]
commutes. Here $a_f$ and $b_f$ are the $L_f$-linear isomorphisms corresponding to the bases $I_f(a \otimes 1)$ and $I_f(b \otimes 1)$, respectively.
\end{theorem}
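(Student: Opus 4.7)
The proof reduces to the classical main theorem of complex multiplication. Since every endomorphism in $i(L) \subset \End_F^0(A)$ is $F$-rational, the map $\tau: h^1_f(A) \to h^1_f(A^\tau)$ is $L_f$-linear with respect to the transported $L$-action on the target. Hence, after fixing any auxiliary $L$-basis $b_0 \in h^1_B(A^\tau)$, there exists a unique $\mu_0 \in L_f^\times$ with $\tau \circ a_f = b_{0,f} \circ \mu_0$, equivalently $\tau I_f(a) = \mu_0 \cdot I_f(b_0)$. Uniqueness of $b$ in the theorem is then immediate from the injectivity of $h^1_B(A^\tau) \hookrightarrow h^1_f(A^\tau)$, since the diagram forces $I_f(b) = \lambda^{-1} \tau I_f(a) = \mu_0 \lambda^{-1} \cdot I_f(b_0)$. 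For existence it suffices to prove $\mu_0 \in L(\Phi, \tau)$: writing $\lambda = u \mu_0$ with $u \in L^\times$, the element $b := u^{-1} b_0 \in h^1_B(A^\tau)$ is an $L$-basis whose image satisfies $I_f(b) = \mu_0 \lambda^{-1} \cdot I_f(b_0)$, realising the commutative diagram.

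The essential content is therefore the assertion $\mu_0 \in L(\Phi, \tau)$, and this is the main obstacle. Consider first the case that $L$ itself is a CM-field. The relation $r_L(\mu_0) = \tr_{L,\Phi}(\tau)$ is the Shimura-Taniyama reciprocity law: the Galois action on the Tate module of a CM abelian variety is given, up to an $L^\times$-factor absorbed into the choice of basis, by the Artin image of an idele realising the type transfer. The complementary condition $\mu_0 \mu_0^c \chi_\cyc(\tau) \in L^\times$ follows from the existence of a polarization and the compatibility of the Weil pairing $h^1_f(A) \otimes h^1_f(A^\vee) \to \QQ_f(-1)$ with the Galois action: on $A^\vee$, whose CM-type is $\ol{\Phi}$, the $\tau$-factor with respect to the dual basis equals $\mu_0^c$, and pairing against the cyclotomic twist on the right forces the stated $L^\times$-identity. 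Both together form the classical main theorem of Shimura-Taniyama, Tate, Langlands and Deligne; a complete proof can be found in \cite{lang}, Ch.7.

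For the general case where $L$ is totally imaginary containing the maximal CM-subfield $K$, we reduce to the CM-field case via Serre's tensor construction (Remark \ref{serretensorCM}). Up to $\cO_L$-linear $F$-isogeny, $A$ is of the form $B \otimes_{\cO_K} \cO_L$ with $B/F$ of CM-type $(K, \Phi_K)$, where $\Phi = N^\ast_{L/K}(\Phi_K)$. This yields a canonical $L_f$-linear identification $h^1_f(A) \cong h^1_f(B) \otimes_{K_f} L_f$ compatible with the Galois actions on both sides, under which any $L$-basis of $h^1_B(A)$ arises from a $K$-basis of $h^1_B(B)$ after multiplication by some element of $L^\times$. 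Applying the CM-field case to $B$ produces an element $\nu_0 \in K(\Phi_K, \tau) \subset K_f^\times$ whose image in $L_f^\times$ coincides with $\mu_0$ up to a factor in $L^\times$. Combining this with the definition $L(\Phi, \tau) = L^\times \cdot K(\Phi_K, \tau)$, we conclude $\mu_0 \in L(\Phi, \tau)$, completing the proof.
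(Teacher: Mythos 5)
Your proof is correct and follows essentially the same route as the paper: the content is delegated to the classical main theorem of complex multiplication in the CM-field case (you cite \cite{lang}, Ch.\ 7, the paper cites the dual version of \cite{milne}, Theorem 1.3), and the general totally imaginary $L$ is reduced to that case via Serre's tensor construction as in Remark \ref{serretensorCM}, using that the coset $L(\Phi,\tau) = L^\times \cdot K(\Phi_K,\tau)$ is independent of the choice of CM-subfield inducing $\Phi$. Your preliminary reformulation (that existence and uniqueness of $b$ amount to the transition idele $\mu_0$ lying in $L(\Phi,\tau)$) is a useful explicit spelling-out of what the paper only records in the remark following the theorem.
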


\begin{proof}
In the case where $L$ is a CM-field, this is a dual version (without the additional data of polarizations) of \cite{milne}, Theorem 1.3. The general case reduces to the CM-case by Remark \ref{serretensorCM}.
\end{proof}

\begin{remark}
Equivalently, if $a \in h^1_B(A)$ and $b \in h^1_B(A^\tau)$ are given $L$-bases, there exists a unique element $\lambda \in L(\Phi,\tau)$ such that the diagram in Theorem \ref{main_theorem_of_cm} commutes.
For a generalization of the above theorem to motives for arbitrary Hecke characters, we refer the reader to \cite{blasius}, 4.1 and 4.2.
\end{remark}

Let us also state the following equivalent formulation of Theorem \ref{main_theorem_of_cm} in terms of complex uniformizations:

\begin{theorem}\label{main_theorem_uniformizations}
Let $F$ be a number field and $A/F$ be an abelian variety of CM-type $(L,\Phi)$. Let $\theta: \CC^\Phi/\fra \to A(\CC)$ be an $\cO_L$-linear complex uniformization where $\fra$ is a fractional ideal of $L$. Let $\tau \in G_\QQ$ and $\lambda \in L(\Phi,\tau)$. Then there exists an $\cO_L$-linear complex uniformization $\theta_\tau: \CC^{\tau \Phi}/ [\lambda]^{-1}_L \fra  \to A^\tau(\CC)$ such that the diagram
\begin{equation}\label{maintheoremdiagram}
\begin{tikzcd}
 L/\fra \arrow{d}{\lambda^{-1}} \arrow{r}{\theta} & A(\ol{\QQ})_\tor \arrow{d}{\tau} \\
 L/ [\lambda]^{-1}_L \fra \arrow{r}{\theta_\tau} & A^\tau(\ol{\QQ})_\tor
\end{tikzcd}
\end{equation}
commutes.
\end{theorem}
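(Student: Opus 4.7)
The approach is to derive Theorem \ref{main_theorem_uniformizations} as a direct translation of Theorem \ref{main_theorem_of_cm} from the language of $L$-bases on Betti cohomology into the language of complex uniformizations. The key dictionary is that an $\cO_L$-linear uniformization $\theta\colon \CC^\Phi/\fra \to A(\CC)$ is the same datum as an $L$-linear isomorphism $\eta\colon L \xrightarrow{\sim} H_1(A(\CC),\QQ)$ carrying $\fra$ onto $H_1(A(\CC),\ZZ)$, together with the canonical CM-decomposition $\Lie(A(\CC)) \cong \CC^\Phi$. Dualising $\eta$ via the natural pairing with cohomology furnishes a distinguished $L$-basis $a \in h^1_B(A)$.

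First, I would feed $a$ and the given $\lambda \in L(\Phi,\tau)$ into Theorem \ref{main_theorem_of_cm} to obtain an $L$-basis $b \in h^1_B(A^\tau)$ satisfying $\tau \circ a_f = b_f \circ \lambda$. Dualising $b$ yields an $L$-linear isomorphism $\eta_\tau\colon L \xrightarrow{\sim} H_1(A^\tau(\CC),\QQ)$; I then define $\fra' := \eta_\tau^{-1}(H_1(A^\tau(\CC),\ZZ))$ and combine $\eta_\tau$ with the canonical identification $\Lie(A^\tau(\CC)) \cong \CC^{\tau\Phi}$, which reflects that base change by $\tau$ twists the CM-type to $\tau\Phi$. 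This produces the desired $\cO_L$-linear uniformization $\theta_\tau\colon \CC^{\tau\Phi}/\fra' \to A^\tau(\CC)$.

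The core computation is to identify $\fra'$ with $[\lambda]^{-1}_L \fra$ and to verify the torsion diagram (\ref{maintheoremdiagram}); for both I would pass to integral lattices under the duality. Under $\eta$, the integral part of $h^1_f(A)$ dualises to $H_1(A(\CC),\widehat\ZZ) = \fra \otimes \widehat\ZZ \subset L_f$ and analogously for $A^\tau$; since $\tau$ is an isomorphism of $\ol\QQ$-schemes, its dual carries these integral lattices isomorphically onto one another. Translating $\tau \circ a_f = b_f \circ \lambda$ through the duality therefore yields $\fra' \otimes \widehat\ZZ = \lambda^{-1}(\fra \otimes \widehat\ZZ)$ inside $L_f$, and applying the multiplicative map $[\cdot]_L$ gives $\fra' = [\lambda]^{-1}_L \fra$. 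The commutativity of (\ref{maintheoremdiagram}) then falls out by reduction modulo integral lattices: the restrictions of $\theta$ and $\theta_\tau$ to torsion are $\eta$ and $\eta_\tau$ modulo $\fra$ and $\fra'$, the Galois action on torsion is the reduction of the dualised $\tau$ on homology with $\widehat\ZZ$-coefficients, and multiplication by $\lambda^{-1}$ on $L_f$ descends to the map $L/\fra \to L/\fra'$ in (\ref{maintheoremdiagram}).

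The main obstacle is careful bookkeeping of the duality: a multiplication by $\lambda$ on the cohomology side of Theorem \ref{main_theorem_of_cm} must be shown to correspond to multiplication by $\lambda^{-1}$ on the Tate-module/lattice side, which is precisely what accounts for the inverse in $[\lambda]^{-1}_L \fra$. Ensuring that $\theta_\tau$ is genuinely $\cO_L$-linear and holomorphic rather than a mere $\QQ$-linear bijection further requires correctly identifying the CM-decomposition of $\Lie(A^\tau(\CC))$ with $\CC^{\tau\Phi}$, but this is routine given the functoriality of the Lie algebra under base change reviewed in Section \ref{section_2_CM_ab_var}.
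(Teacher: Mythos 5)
Your argument is correct and takes essentially the same route as the paper, whose proof simply cites Milne (Theorem 1.1 and Lemma 1.4) for the fact that the uniformization statement is equivalent to Theorem \ref{main_theorem_of_cm} via exactly the duality translation you carry out (including the inversion of $\lambda$ when passing from cohomology to the Tate module, which produces $[\lambda]_L^{-1}\fra$). The only point to pin down is the choice of $L$-linear identification $\Hom_\QQ(L,\QQ)\cong L$ (e.g.\ via the trace form) used to dualise bases, but any choice made uniformly for $A$ and $A^\tau$ works.
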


\begin{proof}
This is part of \cite{milne}, Theorem 1.1. In loc.\ cit., Lemma 1.4 the assertion is shown to be equivalent to Theorem \ref{main_theorem_of_cm}.
\end{proof}

Let us briefly recall the definition of the left vertical map in (\ref{maintheoremdiagram}): For any finite place $v$ write $\fra_v = \fra \cO_{L_v}$. The inclusions $L \subset L_v$ induce an $\cO_L$-linear isomorphism $L/\fra \to \bigoplus_{v} L_v/\fra_v$. Any idele $\lambda \in L_f^\times$ defines a map $\lambda: \bigoplus_v L_v/\fra_v \to \bigoplus_v L_v/ \lambda_v \fra_v$ by $(x_v)_v \mapsto (\lambda_v x_v)_v$ and hence gives rise to a map $\lambda : L/\fra \to L/ [\lambda]_L \fra$.

\begin{remark}
In this paper the general cosets (\ref{modified_coset}) will be used but their explicit description in terms of the type transfer is strictly-speaking only relevant to us for automorphisms which fix the reflex field of $(L,\Phi)$, i.e.\ we only need the reciprocity law of Shimura-Taniyama. The main theorem of complex multiplication in its general form is crucially used in \cite{blasius}, Theorem 4.5.2 in order to show that the reflex motive, which a priori is defined from $M(\chi)$ as a suitable direct factor in an exterior power of its dual, is the motive of a Hecke character. Up to a Tate twist this Hecke character agrees with $\Xi$ introduced in Section \ref{setup_hecke_character}.
\end{remark}

\section{CM-motives}\label{section_3_cm_motives} 
We will now introduce the motives that we will work with throughout this paper. By definition, the category of CM-motives is obtained as the full Tannakian subcategory within the category of motives for absolute Hodge cycles spanned by Artin motives and abelian varietes which are potentially of CM-type. We will start the following section by recalling the construction of this category and we discuss the fact that it embeds fully faithfully into a certain category of compatible systems of realizations. Moreover, we recall various basic notions like extension and restriction of scalars, motives with coefficients and $L$-functions. Finally, we discuss the relationship between CM-motives and algebraic Hecke characters.

\subsection{Motives for absolute Hodge cycles and CM-motives}\label{section_absolute_hodge}
Let $L$ be a subfield of $\CC$ and let $\ol{L}$ denote the algebraic closure of $L$ in $\CC$.
Let $X/L$ be a smooth projective variety. We have the following cohomology theories: 
\begin{enumerate}
\item[•] For every embedding $\sigma \in J_L$, we have the Betti cohomology $h^i_\sigma(X) \coloneqq H^i(X^\sigma(\CC),\QQ)$. It is a finite-dimensional $\QQ$-vector space and it is equipped with a pure rational $\QQ$-Hodge structure $h^i_\sigma(X) \otimes \CC = \bigoplus_{p+q=i} h_\sigma^{p,q}(X)$. If $\sigma$ is a real embedding, complex conjugation induces an involution $F_{\infty,\sigma}$ on $h^i_\sigma(X)$.
When $\sigma = 1_L$ is the fixed embedding, we simply write $h^i_B(X)$ for $h^i_{1_L}(X)$.

\item[•] The algebraic de Rham cohomology $h^i_\dR(X) \coloneqq H^i(X, \Omega^\bullet_{X/L})$. It is a finite dimensional $L$-vector space and equipped with the Hodge filtration $F^i \coloneqq \im(H^i(X, \Omega^{\geq i}_{X/L}) \to H^i(X,\Omega^\bullet_{X/L})$.

\item[•] The finite adelic cohomology $h^i_f(X) = (\varprojlim_{n} H^i_\et(X_{\ol{L}},\ZZ/n\ZZ)) \otimes_{\ZZ} \QQ$. It is a finite free $\QQ_f$-module equipped with a $\QQ_f$-linear continuous action of $G_L$.
 
\item[•] For every $\sigma \in J_L$, there is a comparison isomorphism
\[
I_{\infty,\sigma}: h^i_\dR(X) \otimes_{L,\sigma} \CC \xto{\sim} h^i_\sigma(X) \otimes \CC,
\]
such that 
\[
I_{\infty,\sigma}(F^p h_\dR^i(X) \otimes_{L,\sigma} \CC) = \bigoplus_{p' \geq p, q} h_\sigma^{p',q}(X).
\]
Let us simply write $I_\infty$ if $\sigma$ is the fixed embedding $L \subset \CC$.
\item[•] For every embedding $\ol{\sigma} \in J_{\ol{L}}$ which restricts to $\sigma \in J_L$, we have a comparison isomorphism
\[
I_{f,\ol{\sigma}} : h^i_\sigma(X) \otimes \QQ_f \xto{\sim} h^i_f(X),
\] 
such that, for every $\tau \in G_L$, the diagram
\[
\begin{tikzcd}
h^i_\sigma(X) \otimes \QQ_f \arrow{r}{I_{f, \ol{\sigma}\tau}} \arrow{rd}[swap]{I_{f,\ol{\sigma}}} & h^i_f(X) \arrow{d}{\tau} \\
& h^i_f(X)
\end{tikzcd}
\]
commutes. When $\ol{\sigma}$ is the fixed embedding $\ol{L} \subset \CC$, we simply denote it by $I_f$.

\item[•] We define the $\QQ$-rational Hodge structure 
$\QQ(1)_B \coloneqq 2\pi i \QQ$
with $\QQ(1)_B \otimes \CC$ of pure bidegree $(-1,-1)$, the filtered $L$-vector space 
$\QQ(1)_\dR \coloneqq L$
with $\QQ(1)_\dR = F^{-1} \supset F^0 = 0$ and the $G_L$-representation 
$\QQ(1)_f \coloneqq (\varprojlim \mu_n(\ol{L}) ) \otimes_{\ZZ} \QQ$.
For every $\sigma \in J_L$, we identify 
\[
\QQ(1)_\dR \otimes_{L,\sigma} \CC \cong \CC \cong 2\pi i \QQ \otimes \CC = \QQ(1)_B \otimes \CC
\]
via the inclusion $2 \pi i \QQ \subset \CC$. For every embedding $\ol{\sigma}: \ol{L} \into \CC$, we identify 
\[
\QQ(1)_B \otimes \QQ_f \cong (\varprojlim_n 2\pi i \ZZ/2\pi i n \ZZ) \otimes \QQ \xto{\sim} (\varprojlim_n \mu_n(\CC)) \otimes \QQ \cong \QQ(1)_f
\]
via the exponential map and $\ol{\sigma}: \mu_n(\ol{L}) \cong \mu_n(\CC)$. For every integer $m \in \ZZ$, we put
$h^i_\sigma(X)(m) \coloneqq h^i_\sigma(X) \otimes \QQ(1)_B^{\otimes m}$, $h^i_\dR(X)(m) \coloneqq h^i_\dR(X) \otimes_L \QQ(1)_\dR^{\otimes m}$ and $h^i_f(X)(m) \coloneqq h^i_f(X) \otimes_{\QQ_f} \QQ(1)_f^{\otimes m}$ in the respective categories, and we define comparison isomorphisms $I_{\infty,\sigma}$ and $I_{f,\ol{\sigma}}$ for all $\sigma \in J_L$ and $\ol{\sigma} \in J_{\ol{L}}$ by using the above identifications for $\QQ(1)$.
\end{enumerate}

Let $X/L$ be a smooth projective variety. Fix an integer $p \geq 0$. An \emph{absolute Hodge cycle on $X$} is an element 
\[
x = (x_\sigma, x_\dR, x_f)_{\sigma \in J_L} \in \prod_{\sigma \in J_L} h_\sigma^{2p}(X)(p) \times h^{2p}_\dR(X)(p) \times h^{2p}_f(X)(p)
\]
satisfying 
$I_{\infty,\sigma}(x_\sigma) = x_\dR$, $I_{f,\ol{\sigma}}(x_\sigma) = x_f$ and $x_\dR \in F^0 h^{2p}_\dR(X)(p)$
for all $\sigma \in J_L$ and $\ol{\sigma} \in J_{\ol{L}}$ extending $\sigma$. Denote the $\QQ$-vector space of absolute Hodge cycles by $C^p_{\AH}(X)$.

One then defines the category $\M_L$ of \emph{motives for absolute Hodge cycles} in the usual way: Starting from the category whose objects are smooth projective varieties over $L$ and where the morphisms between connected varieties $X$ and $Y$ are given by 
$\Hom(X,Y) = C^{\dim(X)}_\AH(X \times Y)$, one first passes to the Karoubian envelope. The usual cohomological grading gives rise to a natural $\ZZ$-grading. The category $\M_L$ is then obtained by inverting the Lefschetz motive $H^2(\PP^1)$ and finally modifying the commutativity constraint of the tensor structure, which is induced by the Cartesian product of varieties. For details we refer the reader to \cite{DMOS}, II.6. One can show that $\M_L$ is a semisimple Tannakian category over $\QQ$. Let $\cV_L$ denote the category of smooth projective varieties over $L$. There is a canonical contravariant functor 
\[
\cV_L^{\op} \to \M_L, \;\;\; X \mapsto h(X)
\]
and we let $h(X) = \bigoplus_{i \in \ZZ} h^i(X)$ denotes its canonical grading.

The category of \emph{CM-motives} $\CM_L$ is defined as the full Tannakian subcategory of $\M_L$
generated by the Artin motives over $L$, i.e.\ motives of zero-dimensional varieties over $L$, and motives $h^1(A)$ for potentially CM abelian varieties $A/L$.
We remark that $\QQ(n)$ for $n \in \ZZ$ lies in $\CM_L$ since $\QQ(-1)$ can be obtained as $h^2(E)$ for any potentially CM elliptic curve over $L$.
Throughout the remainder of this paper, we will exclusively work within $\CM_L$ and we simply refer to its objects as motives.

\subsection{Realizations}
It follows from the construction that the category $\M_L$ embeds fully faithfully into a category of compatible systems of realizations, which essentially consists of the abstract data listed at the beginning of Section \ref{section_absolute_hodge}, cf.\ \cite{DMOS}, Ch.II, Theorem 6.7 (g). For this, all embeddings of the base field $L$ have to be taken into account. Deligne's theorem that every Hodge cycle on an abelian variety over an algebraically closed field of characteristic zero is already absolute Hodge allows to embed the subcategory $\CM_L$ of CM-motives into a category of realizations $\cR_L$, where only the fixed embedding $L \subset \CC$ has to be considered. In the following, we will review the precise definition of $\cR_L$ as well as the fully faithful embedding $\CM_L \into \cR_L$.

Let $\cR_L$ denote the category of tuples $H = \{H_B, H_\dR, H_f, I_\infty, I_f\}$ consisting of the following data:
\begin{enumerate}
\item[•] A finite dimensional $\QQ$-vector space $H_B$ equipped with a decomposition
\[
H_B \otimes \CC = \bigoplus_{p,q \in \ZZ} H_B^{p,q},
\]
into $\CC$-subspaces such that $(H_B^{p,q})^{1 \otimes c} = H_B^{q,p}$.
\item[•] A finite dimensional $L$-vector space $H_\dR$ equipped with a decreasing filtration $F^\bullet H_\dR$. 

\item[•] A finite free $\QQ_f$-module $H_f$ equipped with a continuous linear action of the absolute Galois group $G_L$. We denote the action by $v \mapsto v^\tau$ for $\tau \in G_L$ and $v \in H_f$.

\item[•] A $\CC$-linear isomorphism $I_\infty: H_\dR \otimes_L \CC \xto{\sim} H_B \otimes \CC$ satisfying 
\[
I_\infty(F^p H_\dR \otimes \CC) = \bigoplus_{p' \geq p, q} H_B^{p',q}.
\]

\item[•] A $\QQ_f$-linear isomorphism $I_f: H_B \otimes \QQ_f \xto{\sim} H_f$.
\end{enumerate}
A morphism $\varphi: H \to H'$ in $\cR_L$ is a triple $\varphi = (\varphi_B,\varphi_\dR,\varphi_f)$, where:
\begin{enumerate}
\item[•] $\varphi_B: H_B \to H'_B$ is a $\QQ$-linear homomorphism such that $\varphi_B(H_B^{p,q}) \subset (H'_B)^{p,q}$ for all $p,q \in \ZZ$;
\item[•] $\varphi_\dR : H_\dR \to H'_\dR$ is a $L$-linear homomorphism satisfying $\varphi_\dR(F^p H_\dR) \subset F^p H'_\dR$ for all $p \in \ZZ$;
\item[•] $\varphi_f: H_f \to H'_f$ is a $G_L$-equivariant $\QQ_f$-linear homomorphism;
\item[•] $\varphi_f \circ I_f = I_f \circ \varphi_B$; 
\item[•] $I_\infty \circ \varphi_\dR = \varphi_B \circ I_\infty$.
\end{enumerate}

For any proper smooth variety $X/L$, 
the cohomology theories described in Section \ref{section_absolute_hodge} give rise to a tensor functor 
\[
\CM_L \to \cR_L, \;\;\; M \mapsto \{M_B, M_\dR, M_f, I_\infty, I_f\}.
\]
A motive is \emph{pure of weight $w \in \ZZ$}, if $M_B^{pq} = 0$ unless $p+q = w$. In this paper, we will only encounter pure motives.

Let us recall the following consequence of Deligne's theorem (\cite{DMOS}, Ch.I, Theorem 2.11) that every Hodge cycle on an abelian variety over an algebraically closed field of characteristic zero is absolute Hodge:

\begin{proposition}\label{fully_faithful_realizations}
The functor $\CM_L \to \cR_L$ is fully faithful. In fact, even more is true: For specifying a morphism $\varphi: M \to N$ in $\CM_L$, it suffices to give a morphism $\varphi_B : M_B \to N_B$ of $\QQ$-rational Hodge structures and a $G_L$-equivariant homomorphism $\varphi_f : M_f \to N_f$ such that $\varphi_B$ and $\varphi_f$ are compatible via the comparison isomorphism $I_f$.
\end{proposition}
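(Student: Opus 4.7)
Setting $S \coloneqq M^\vee \otimes N$, morphisms $M \to N$ in $\CM_L$ are by construction in bijection with absolute Hodge cycles on $S$ of Hodge type $(0,0)$, i.e.\ tuples $(x_\sigma, x_\dR, x_f)_{\sigma \in J_L}$ with $x_\sigma \in S_\sigma$ of bidegree $(0,0)$, $x_\dR \in F^0 S_\dR$, $x_f \in S_f$, and $I_{\infty,\sigma}(x_\dR) = x_\sigma$, $I_{f,\ol{\sigma}}(x_\sigma) = x_f$ for all $\sigma$ and every $\ol{\sigma} \in J_{\ol{L}}$ extending $\sigma$. The plan is to establish both assertions at once by showing that every pair $(x_B, x_f)$ with $x_B \in S_B$ Hodge of type $(0,0)$, $x_f \in S_f^{G_L}$, and $I_f(x_B) = x_f$ extends uniquely to such an absolute Hodge cycle; full faithfulness of $\CM_L \to \cR_L$ is then immediate, since any morphism in $\cR_L$ in particular provides such a pair.

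Uniqueness is routine: the relation $I_{f,\ol{\sigma}}(x_\sigma) = x_f$ forces $x_\sigma = I_{f,\ol{\sigma}}^{-1}(x_f)$, and the ambiguity in the choice of $\ol{\sigma}$ above a given $\sigma$ is absorbed by the $G_L$-invariance of $x_f$ via the compatibility $I_{f,\ol{\sigma}\tau} = \tau^{-1} \circ I_{f,\ol{\sigma}}$ recalled in Section \ref{section_absolute_hodge}. The de Rham component is then determined through $I_{\infty,\sigma}$, once we know it is $L$-rational.

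For existence, I would pass to $\ol{L}$: the base change $S_{\ol{L}}$ is still built from Artin motives and abelian varieties of CM-type, so Deligne's theorem that every Hodge class on such a motive over an algebraically closed field of characteristic zero is absolute Hodge (\cite{DMOS}, Ch.I, Theorem 2.11) applies. Applied to $x_B$, viewed as a Hodge class on $S_{\ol{L}}$ at the fixed embedding $\ol{L} \subset \CC$, this produces an absolute Hodge cycle on $S_{\ol{L}}$; since the embeddings $\ol{L} \hookrightarrow \CC$ lying above $\sigma \in J_L$ exhaust all Betti realizations of $S/L$, this cycle supplies candidate Betti components $x_\sigma$ for every $\sigma \in J_L$, together with a de Rham component $\tilde{x}_\dR \in F^0(S_\dR \otimes_L \ol{L})$ and the prescribed $x_f$.

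The main technical step, and what I expect to be the principal obstacle, is to show that this absolute Hodge cycle on $S_{\ol{L}}$ is $G_L$-equivariant, so that it descends to an absolute Hodge cycle on $S/L$. For any $\tau \in G_L$, applying $\tau^\ast$ yields a further absolute Hodge cycle on $S_{\ol{L}}$; its $f$-component equals $\tau(x_f) = x_f$ by hypothesis, and since by uniqueness an absolute Hodge cycle is determined by its $f$-component, $\tau^\ast$ fixes our cycle. In particular $\tilde{x}_\dR$ is $G_L$-invariant, hence descends to the required $x_\dR \in F^0 S_\dR$, and the Betti components $x_\sigma$ satisfy the expected Galois transformation rules, producing the absolute Hodge cycle on $S/L$ and thus the desired morphism in $\CM_L$.
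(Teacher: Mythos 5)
Your argument is correct and follows essentially the same route as the paper's proof: both reduce to Deligne's ``Hodge implies absolute Hodge'' theorem over $\ol{L}$ (you invoke \cite{DMOS}, Ch.I, Theorem 2.11 directly on the class in $(M^\vee\otimes N)_B$, where the paper cites the equivalent full-faithfulness statement Ch.II, Theorem 6.25) followed by Galois descent using the $G_L$-equivariance of the finite adelic component. Your write-up simply unpacks these two steps at the level of absolute Hodge cycles, with the descent mechanism (the $f$-component determines the cycle, and its $G_L$-invariance forces invariance of all components) spelled out explicitly.
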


\begin{proof}
This follows immediately from the following two observations: First, Deligne's theorem implies that the functor $M \mapsto M_B$ from the category of CM-motives over the algebraic closure $\ol{L}$ into the category of $\QQ$-rational Hodge structures is fully faithful, cf.\ \cite{DMOS}, Ch.II, Theorem 6.25. Second, it follows from the definition of an absolute Hodge cycle, that any morphism $M \times_L \ol{L} \to N \times_L \ol{L}$ with $M, N \in \CM_L$, whose finite adelic realization is $G_L$-equivariant, is induced from a morphism $M \to N$. For a recollection of the extension of scalars $M \times_L \ol{L}$ we refer the reader to the next section.
\end{proof}

In order to account for the other embeddings of $L$, we employ the following formalism that is also used in \cite{blasius}:
For every $\tau \in G_\QQ$ and $M \in \CM_L$, there is a motive $M^\tau \in \CM_{L^\tau}$, which is induced by taking the base change along $\tau: L \to L^\tau$ on the level of varieties. There is a $\tau$-semilinear bijection
$M_\dR \to M^\tau_\dR$, $\omega \mapsto \omega^\tau$
as well as a $\QQ_f$-linear bijection
$M_f \to M_f^\tau$, $v \mapsto v^\tau$.
Note that, in contrast to the maps $M_f \to M_f^\tau$, the object $M^\tau$ only depends on the restriction $\tau|_L \in J_L$. If $\tau \in G_L$, we recover the $G_L$-action on $M_f$. 

\subsection{Extension and restriction of scalars}\label{section_extn_restr_of_scalars}
Let $L'/L$ be an extension of subfields of $\CC$. Base change from $L$ to $L'$ on the level of varieties induces the extension of scalars functor
\[
(-) \times_L L' : \CM_L \to \CM_{L'}.
\]
For a motive $M$ in $\CM_L$, the extension of scalars $M \times_L L' \in \CM_{L'}$ satisfies $(M \times_L L')_\dR = M_\dR \otimes_L L'$, $(M \times_L L')_f = M_f$ with the Galois action restricted to $G_{L'} \subset G_L$ and all the other data is unchanged.

Now assume that $L'/L$ is an extension of number fields. By regarding varieties over $L'$ as being defined over $L$, one obtains the restriction of scalars functor
\[
R_{L'/L} : \CM_{L'} \to \CM_L.
\] 
For every motive $N$ in $\CM_{L'}$, the restriction of scalars $R_{L'/L}(N) \in \CM_L$ satisfies
\begin{enumerate}
\item[•] $R_{L'/L}(N)_B = \bigoplus_{\sigma \in J_{L'/L}} N_B^\sigma$,
\item[•] $R_{L'/L}(N)_\dR = N_\dR$ regarded as a $L$-module, and
\item[•] $R_{L'/L}(N)_f = \bigoplus_{\sigma \in J_{L'/L}} M_f^\sigma$,
\end{enumerate}
where an element $\tau \in G_L$ acts via the $\QQ_f$-linear maps $\tau: M_f^\sigma \xto{\sim} M_f^{\tau \sigma}$, for $\sigma \in J_{L'/L}$.

\subsection{Motives with coefficients}
Let $T$ be a number field.
A motive $M$ defined over $L$ \emph{with coefficients in $T$} is an object in $\CM_L$ equipped with a ring homomorphism
$T \to \End(M)$.
A morphism $\varphi: M \to N$ between motives with coefficients in $T$ is a morphism in $\CM_L$ which is compatible with the structure maps in the obvious way. The collection $\Hom_T(M,N)$ of such morphisms naturally forms a $T$-module. We denote the category of motives over $L$ with coefficients in $T$ by $\CM_L(T)$. We call $\dim_T M_B$ the rank of the motive $M$.

For a motive $M$ in $\CM_L(T)$, the realizations $M_B$, $M_\dR$ and $M_f$ acquire the structures of finite free $T$-, $T \otimes L$- and $T_f$-modules, respectively. Moreover the comparison isomorphisms $I_\infty$ and $I_f$ are $T \otimes \CC$-linear and $T_f$-linear, respectively.
Let $T' \supset T$ be an extension of number fields. There is a motive $M \otimes_T T'$ in $\CM_L(T')$, the \emph{extension of coefficients of $M$ from $T$ to $T'$}, satisfying 
\[
(M \otimes_T T')_B = M_B \otimes_T T', \;\;\; (M \otimes_T T')_\dR = M_\dR \otimes_T T', \;\;\; 
(M \otimes_T T')_f = M_f \otimes_T T'.
\]
Let $N$ be a motive in $\CM_L(T')$. There exists a motive $R_{T'/T} N$, \emph{the restriction of coefficients of $N$ from $T'$ to $T$}, whose realizations $(R_{T'/T} N)_B$, $(R_{T'/T} N)_\dR$ and $(R_{T'/T} N)_f$ are given by $N_B$, $N_\dR$ and $N_f$ regarded as modules over $T$, $T \otimes L$ and $T_f$, respectively.
For any two motives $M, N \in \CM_L(T)$, one can define a tensor product $M \otimes_T N$, which in terms of realizations is given by $(M \otimes_T N)_B = M_B \otimes_T N_B$, $(M \otimes_T N)_\dR = M_\dR \otimes_{T \otimes L} N_\dR$ and $(M \otimes_T N)_f = M_f \otimes_{T_f} N_f$. Similarly, one can define $T$-linear symmetric and exterior powers in the category $\CM_L(T)$.

\subsection{Motivic $L$-functions}
Let $L$ and $T$ be number fields and $M$ an object in $\CM_L(T)$. 
Let $v$ be a finite place of $L$. Fix a decomposition group $D_v \subset G_L$ of $v$ and denote the inertia subgroup by $I_v$. Let $\Fr_v \in D_v$ denote a geometric Frobenius element at $v$. For any prime $p$ such that $v \nmid p$, we define the polynomial
\[
P_v(M,t) \coloneqq \det_{T_p}(1- \Fr_v \cdot t| M_p^{I_v}) \in T_p[t],
\]
where $T_p \coloneqq T \otimes \QQ_p$. 
Conjecturally, the polynomial $P_v(M,t)$ actually lies in $T[t]$ and is independent of the prime $p$ with $v \nmid p$.
Assuming that this is true for $M$, 
we define the \emph{$L$-function of $M$} as the Euler product
\[
L(M,s) \coloneqq \prod_{v} P_v(M,Nv^{-s})^{-1},
\]
where $Nv$ denote the cardinality of the residue field at $v$.
This converges for $\Re(s) \gg 0$ and on its domain of convergence $L(M,s)$ is a holomorphic function with values in $T \otimes \CC$. Moreover, if $S$ is a finite set of places of $L$, we put
$L_S(M,s) \coloneqq \prod_{v \notin S} P_v(M,Nv^{-s})^{-1}$
 
It follows from the definition that for any $n \in \ZZ$ one has
\[
L(M(n),s) = L(M,s+n).
\]
In the study of the values of $L(M,s)$ at integers, the above equality allows to restrict the attention to the value at $s =0$.

\subsection{Motives of Hecke characters}
Let $L$ and $T$ be number fields and assume for simplicity that $L$ is totally imaginary. 
We briefly review the relationship between rank $1$ motives in $\CM_L(T)$ and algebraic Hecke characters $\chi: L_f^\times \to T^\times$, also cf.\ \cite{blasius}, 3.2. 

\begin{definition}
Let $\chi: L_f^\times \to T^\times$ be an algebraic Hecke character. We say that a motive $M \in \CM_L(T)$ is of type $(L,\chi,T)$ if $M$ is of rank $1$ and for every $\tau \in G_L$ and $\ell \in L_f^\times$ satisfying $r_L(\ell) = \tau|_{L^\ab}$, one has 
\[
v^\tau = \chi_a(\ell)^{-1} \chi(\ell) \cdot v 
\]
for all $v \in M_f$.
\end{definition}

Let $M$ be a motive of type $(L,\chi,T)$. Let $\frf \subset \cO_L$ be the exact conductor of $\chi$. For any finite place $v | \frf$, one has $\chi|_{\cO_{L_v}^\times} \neq 1$ and hence $M_p^{I_v} = 0$ for any prime $p$ with $v \nmid p$. If $v \nmid \frf$, the inertia group $I_v$ acts trivially on $M_p$ and any geometric Frobenius element $\Fr_v$ acts via multiplication by $\chi(\pi_v) \in T^\times$ where $\pi_v \in L_v^\times \subset L_f^\times$ is a uniformizer at $v$. This establishes the independence of $p$ for the local Euler factors of $M$ and shows
$L(M,s) = L_\frf(\chi,s)$,
where $L_\frf(\chi,s)$ is the Hecke $L$-function of $\chi$ defined in (\ref{hecke_l_function_def}). If $\chi$ is only of conductor dividing $\frf$ and if $S$ denote the finite set of places of $L$ which divide $\frf$, then one has $L_S(M,s) = L_\frf(\chi,s)$. 

In the next theorem, we summarize the close relationship between CM-motives and algebraic Hecke characters. In fact, there is a bijection between the isomorphism classes of \emph{rank $1$} CM-motives and algebraic Hecke characters:

\begin{theorem}[Deligne, Langlands]\label{rank_1_motives_are_heckecharacters} \leavevmode
\begin{enumerate}
\item Any two motives of type $(L,\chi,T)$ are isomorphic in $\CM_L(T)$.
\item For every algebraic Hecke character $\chi: L_f^\times \to T^\times$, there exists a motive $M(\chi)$ of type $(L,\chi,T)$.
\item Every motive $M \in \CM_L(T)$ of rank $1$ is of type $(L,\chi,T)$ for a unique algebraic Hecke character $\chi: L_f^\times \to T^\times$.
\end{enumerate}
\end{theorem}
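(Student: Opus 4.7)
My proof proposal proceeds by treating the three statements separately, with (ii) being the substantial one; (i) and (iii) are essentially formal consequences of Proposition \ref{fully_faithful_realizations}.

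For (iii), suppose $M \in \CM_L(T)$ has rank one. Then $M_f$ is a free $T_f$-module of rank one, and the continuous $T_f$-linear action of $G_L$ is given by a character $\rho: G_L \to T_f^\times$, which automatically factors through $G_L^{\ab}$. Composing with the geometric Artin homomorphism yields a continuous homomorphism $\rho \circ r_L: L_f^\times \to T_f^\times$. To produce the infinity type, note that $M_\dR$ is free of rank one over $T \otimes L$; via $I_\infty$, the Hodge decomposition of $M_B \otimes \CC$ pulls back to a decomposition of $M_\dR \otimes \CC$ compatible with the $L$-action, and the weights of this decomposition package into an element $\chi_a \in I_L$ invariant under $G_T$, giving an algebraic homomorphism $\chi_a: L^\times \to T^\times$. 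One then checks (using that the Hodge structure is pure of some weight $w$ and that rationality on $M_B$ implies the appropriate relation $\chi_a \cdot c^*\chi_a = \|\cdot\|^{-w}$) that $\chi \coloneqq \chi_a \cdot (\rho \circ r_L)$ extends by continuity to an algebraic Hecke character $L_f^\times \to T^\times$, and $M$ is of type $(L,\chi,T)$ by construction. Uniqueness of $\chi$ is immediate from the recipe. For (i), given two motives $M, M'$ of type $(L,\chi,T)$, pick any $T_f$-linear isomorphism $\varphi_f: M_f \to M_f'$; it is automatically $G_L$-equivariant because both actions are multiplication by the same $T_f$-valued scalar. Transport $\varphi_f$ through $I_f^{-1}$ to obtain a $T$-linear isomorphism $\varphi_B: M_B \to M_B'$ (after rescaling $\varphi_f$ by an element of $T_f^\times$ so that the induced map on $\QQ_f$-scalar extensions descends to $\QQ$-vector spaces). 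Since the Hodge structures on $M_B$ and $M_B'$ are both determined by $\chi_a$ as in (iii), we may further rescale by $T^\times$ so that $\varphi_B$ preserves Hodge types. By Proposition \ref{fully_faithful_realizations}, the pair $(\varphi_B, \varphi_f)$ is induced by an isomorphism in $\CM_L(T)$.

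For (ii), we construct $M(\chi)$ using Weil's classification recalled in the Remark. Write $\chi = \varrho \cdot (\varphi \circ N_{L/K})$ where $K \subset L$ is the maximal CM-subfield, $\varrho: L_f^\times \to T^\times$ is a finite-order Hecke character, and $\varphi$ is an algebraic Hecke character of $K$. Since tensor products of motives of type $(L,\chi_1,T)$ and $(L,\chi_2,T)$ are of type $(L,\chi_1\chi_2,T)$, it suffices to produce $M(\varrho)$ and $M(\varphi \circ N_{L/K})$ separately. The former is an Artin motive, obtained by viewing $\varrho$ as a character of $G_L^{\ab}$ with values in $T^\times$ and taking the rank-one $T$-module on which $G_L$ acts by $\varrho$; since $\varrho$ has finite order, this is cut out as a direct summand in the motive of a zero-dimensional variety. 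For the latter, it is enough to construct $M(\varphi)$ over $K$ and then base change along $K \subset L$, since the type of the base change is $\varphi \circ N_{L/K}$ up to the expected infinity-type shift. To construct $M(\varphi)$, we choose a CM-type $\Phi$ of $K$ and an abelian variety $A$ with CM by $\cO_K$ of type $(K,\Phi)$, defined over a sufficiently large Galois extension $K'/K$ containing the reflex field; then $h^1(A) \in \CM_{K'}$ is of type $(K',\psi_A,K)$ for the associated Hecke character $\psi_A$, and one isolates $M(\varphi)$ as an appropriate tensorial $T$-coefficient direct factor of a product of symmetric/alternating powers of $h^1(A)$, descending to $K$ via restriction of scalars and taking the $\Gal(K'/K)$-isotypic component corresponding to $\varphi$.

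The main obstacle is this last construction: one must match infinity types exactly and carry out the descent from $K'$ to $K$ compatibly with the $T$-coefficient structure, which is where the theorems of complex multiplication (Theorem \ref{main_theorem_of_cm}) enter to guarantee that the motivic Galois descent realizes the desired Hecke character on the nose rather than only up to a finite twist. Since this is essentially the content of the Deligne--Langlands construction of motives for CM Hecke characters (carried out in detail in \cite{DMOS}, Ch.~IV), we content ourselves with citing it; the cases we use most heavily, those of motives $h^1(A)$ of CM abelian varieties, are directly supplied by Theorem \ref{main_theorem_of_cm} together with the definition.
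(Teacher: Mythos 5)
The paper offers no proof of this statement: it is quoted as a theorem of Deligne and Langlands (cf.\ \cite{blasius}, 3.2; the construction is carried out in \cite{DMOS}, Ch.~IV), so there is no internal argument to compare yours against. Your sketch follows the standard route --- (i) and (iii) from full faithfulness of the realization functor (Proposition \ref{fully_faithful_realizations}), (ii) by Weil's classification reducing to Artin motives and to motives of CM abelian varieties, with base change along $K\subset L$ handled by Proposition \ref{dictionary_motives} --- and, like the paper, it ultimately outsources the substantial existence statement (ii) to \cite{DMOS}. At the level of rigor the paper itself adopts for this result, that is acceptable.

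Two points in your sketch are thinner than you present them. First, in (iii), the character $\rho\circ r_L$ is a priori only $T_f^\times$-valued and continuous for the adelic topology; the assertion that $\chi=\chi_a\cdot(\rho\circ r_L)$ is $T^\times$-valued and locally constant is the actual content of the theorem, and it does not follow from purity and rationality of the Hodge structure, as you suggest --- it uses that $M$ lies in $\CM_L(T)$, so that its adelic realization is potentially abelian and locally algebraic in Serre's sense. Relatedly, the Hodge bigrading of $M_B\otimes\CC$ for a rank one motive is indexed by $J_T$ and hence packages into an element of $I_T$, not of $I_L$; the infinity type $\chi_a\in I_L$ must be extracted from the Galois representation (as the algebraic part of $\rho\circ r_L$), not read off the Hodge decomposition as you claim. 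Second, in (i) you still owe the verification that the transported map $\varphi_B$ respects the Hodge bigradings, i.e.\ that the Hodge types of a motive of type $(L,\chi,T)$ are determined by $\chi$; this is true but is not automatic from your recipe in (iii), precisely because of the $I_T$ versus $I_L$ issue above. Neither point is fatal --- both are standard --- but as written they are assertions rather than proofs.
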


The correspondence between $T$-isomorphism classes of motives of rank $1$ in $\CM_L(T)$ with algebraic Hecke characters of $L$ with values in $T$ has the following properties: 

\begin{proposition}[cf.\ \cite{blasius}, Proposition 3.2.1]\label{dictionary_motives}
Let $\chi: L_f^\times \to T^\times$ be an algebraic Hecke character of weight $w(\chi)$.
Let $M(\chi)$ be a motive of type $(L,\chi,T)$ and let $\sigma \in G_\QQ$. Let $L'/L$ and $T'/T$ be finite extensions. 
\begin{enumerate}
\item $M(\chi)$ is pure of weight $w(\chi)$. 
\item $M(\chi)^\sigma$ is of type $(L^\sigma, \chi \circ \sigma^{-1}, T^\times)$.
\item $M(\chi) \times_L L'$ is of type $(L', \chi \circ N_{L'/L}, T)$. 
\item $M(\chi) \otimes_T T'$ is of type $(L, \chi, T')$.
\end{enumerate}
\end{proposition}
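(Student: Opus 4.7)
All four assertions reduce to verifying the defining conditions of a motive of type $(L,\chi,T)$ on the appropriate realizations. For (ii)--(iv) this is the Galois formula $v^\tau = \chi_a(\ell)^{-1}\chi(\ell) \cdot v$, which I would check by tracing through the definitions of base change (Section \ref{section_extn_restr_of_scalars}), restriction of scalars, and extension of coefficients; the key inputs are the standard compatibilities of the geometric Artin map $r_L$ with Galois conjugation (for (ii)) and with the norm $N_{L'/L}$ (for (iii)). Part (iv) is essentially tautological, since the Galois formula is $T$-linear in $v$ and hence extends to $M(\chi)_f \otimes_T T'$; together with the identification of all three realizations of $M(\chi) \otimes_T T'$ as $\otimes_T T'$-extensions of those of $M(\chi)$, it yields the claim.

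For (i), $M(\chi)_B$ is free of rank one over $T$, so under $T \otimes \CC \cong \prod_{\iota \in J_T} \CC$ the Betti realization $M(\chi)_B \otimes \CC$ decomposes as a direct sum of one-dimensional Hodge structures of bidegrees $(p_\iota, q_\iota)$. I would read these bidegrees off from the infinity type $\chi_a$ (using the $T \otimes L$-module structure on $M(\chi)_\dR$ together with the Hodge filtration transported by $I_\infty$), and then combine this with the identity $\chi \cdot \ol{\chi} = \lVert \cdot \rVert_L^{-w(\chi)}$, i.e., $\chi_a(\sigma) + \chi_a(\ol{\sigma}) = w(\chi)$ for every $\sigma \in J_L$, to conclude that $p_\iota + q_\iota = w(\chi)$ uniformly in $\iota$.

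For (ii), the main step is to fix a lift $\wtilde{\sigma} \in G_\QQ$ of $\sigma$, identify $M(\chi)^\sigma_f$ with $M(\chi)_f$ via $v \mapsto v^{\wtilde{\sigma}}$, and translate the $G_{L^\sigma}$-action into conjugation $\tau \mapsto \wtilde{\sigma}^{-1}\tau\wtilde{\sigma}$ on the $G_L$-side. For $\ell \in L_f^{\sigma,\times}$ with $r_{L^\sigma}(\ell) = \tau|_{(L^\sigma)^\ab}$, Galois-equivariance of $r_L$ gives $r_L(\wtilde{\sigma}^{-1}\ell) = (\wtilde{\sigma}^{-1}\tau\wtilde{\sigma})|_{L^\ab}$, and the elementary identities $(\chi \circ \sigma^{-1})(\ell) = \chi(\wtilde{\sigma}^{-1}\ell)$ and $(\chi \circ \sigma^{-1})_a(\ell) = \chi_a(\wtilde{\sigma}^{-1}\ell)$ then convert the defining property of $M(\chi)$ into the desired formula. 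For (iii), I would invoke $r_{L'} = r_L \circ N_{L'/L}$ (after restricting to maximal abelian quotients) and the identity $(\chi \circ N_{L'/L})_a = N_{L'/L}^\ast(\chi_a)$ to reduce the Galois formula for $M(\chi) \times_L L'$ directly to the one for $M(\chi)$. The mildly delicate check is the independence of (ii) from the choice of lift $\wtilde{\sigma}$, but any two lifts differ by an element of $G_L$ whose action on $M(\chi)_f$ is itself governed by the Galois formula, so the answer is consistent. I expect this bookkeeping in (ii) to be the main point of care; everything else is routine unwinding.
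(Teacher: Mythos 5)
The paper itself offers no proof of this proposition: it is stated as a recollection with a pointer to \cite{blasius}, Proposition 3.2.1, so there is no internal argument to compare against. Judged on its own terms, your treatment of (ii)--(iv) is the standard unwinding and is correct: (iv) is indeed tautological; (iii) follows from $r_{L'} = r_L \circ N_{L'/L}$ on abelianizations together with $(\chi \circ N_{L'/L})_a = N^\ast_{L'/L}(\chi_a)$; and in (ii) the conjugation bookkeeping $w^\tau = (v^{\wtilde{\sigma}^{-1}\tau\wtilde{\sigma}})^{\wtilde{\sigma}}$, the equivariance $r_{L^\sigma}(\sigma\ell) = \wtilde{\sigma}\, r_L(\ell)\, \wtilde{\sigma}^{-1}$, and the identity $(\chi\circ\sigma^{-1})_a(\rho) = \chi_a(\rho\circ\sigma)$ fit together as you describe; your remark that changing the lift $\wtilde{\sigma}$ only rescales the identification $M_f \cong M_f^\sigma$ by an element of $T_f^\times$ is the right consistency check. (The ``$T^\times$'' in the statement of (ii) is a typo in the paper for $T$.)

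The one place where your plan is thinner than it looks is (i). The definition of ``type $(L,\chi,T)$'' constrains only the rank and the $G_L$-action on $M_f$; it says nothing a priori about the Hodge filtration or bigrading. So the step ``read the bidegrees off from $\chi_a$ using the Hodge filtration'' presupposes that the Hodge structure of the \emph{abstract} motive $M(\chi)$ is determined by $\chi_a$ in the expected way, and that is not free: you either have to invoke Theorem \ref{rank_1_motives_are_heckecharacters}(i) (uniqueness up to isomorphism) and compute on an explicit model built from $h^1$ of CM abelian varieties, Artin motives and Tate twists, or argue differently. A cleaner route that avoids this: since every object of $\M_L$ decomposes canonically into pure pieces and $T$ acts by weight-preserving endomorphisms on the one-dimensional $T$-vector space $M(\chi)_B$, the motive is automatically pure of \emph{some} weight $w'$; one then identifies $w' = w(\chi)$ either from the Frobenius eigenvalues $\chi_\iota(\pi_v)$, which are Weil numbers of weight $w(\chi)$ by $\chi\ol{\chi} = \lVert\cdot\rVert_L^{-w(\chi)}$ and Weil's theorem, or from your Hodge-theoretic computation on one model. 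With that point repaired, the proof is complete.
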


\subsection{Motives of CM abelian varieties}\label{section_weil_pairing}
Let $A/F$ be an abelian variety over a number field $F$ with complex multiplication by a totally complex field $L$. The CM-structure equips the motive $h^1(A)$ in $\CM_F$ with the structure of a motive with coefficients in $L$. By the theory of complex multiplication, there exists a unique algebraic Hecke character 
\[
\psi: \AA_F^\times \to L^\times
\]
such that $h^1(A)$ is of type $(F,\psi,L)$. Let $\Phi$ denote the CM-type of $A$. Let $E$ be the reflex field of $(L,\Phi)$ and let $\Phi^\ast$ denote the reflex CM-type. Then $E \subset F$ and the infinity-type of $\psi$ is given by $\psi_a = N_{F/E}^\ast(\Phi^\ast)$. In particular, $\psi$ is of weight $1$. 

Finally, let us recall the Weil pairing: There is a perfect pairing (in fact, an isomorphism)
\[
h^1(A) \otimes_L h^1(A^\vee) \to L(-1)
\]
of motives over $F$ with coefficients in $L$ which induces the corresponding well-known pairings in each realization: In the de Rham realization one has a canonical pairing $h^1_\dR(A) \otimes_{L \otimes F} h^1_\dR(A^\vee) \to L \otimes F$, which is an $L$-linear version of the pairing given by \cite{BBM}, Théorème 5.1.6. In the finite adelic realization, it induces a perfect $G_F$-equivariant pairing
$\langle \cdot, \cdot \rangle_A: h^1_f(A) \otimes_{L_f} h^1_f(A^\vee) \to L_f(-1)$ which is an dual and $L$-linear version of the usual Weil pairing. For later reference, let us observe that for any $\tau \in G_\QQ$ the diagram
\[
\begin{tikzcd}
h^1_f(A) \otimes h^1_f(A^\vee) \arrow{d}[swap]{\tau} \arrow{r}{\langle \cdot, \cdot \rangle_A} & L_f(-1) \arrow{d}{\tau} \\
h^1_f(A^\tau) \otimes h^1_f(A^{\tau,\vee}) \arrow{r}{\langle \cdot, \cdot \rangle_{A^\tau}} & L_f(-1)
\end{tikzcd}
\]
commutes. Similarly, there is a canonical pairing $h^1_B(A) \otimes_L h^1_B(A^\vee) \to (2\pi i)^{-1} L$ and the above pairings are compatible via the comparison isomorphisms. For further information, we refer the reader to \cite{hodgeIII}, 10.2.

\section{The motive \texorpdfstring{$M^\alpha$}{M alpha}}\label{section_5_M_alpha}
Let $M$ be a motive of type $(F,\psi,L)$ and let $\alpha \in I_L^+$. The aim of this section is to provide a detailed construction of a motive $M^\alpha$ of type $(F,\psi^\alpha, L^\alpha)$. We obtain $M^\alpha$ as a direct factor in the $d(\alpha)$th symmetric power of the restriction of coefficients $R_{L/\QQ} M$. The desired projector will be constructed on the level of realizations and we can essentially treat all the realizations simultaneously by means of a general linear algebra construction. We start the section by discussing this general construction using the language of algebraic tori and their representations. We note that the construction of $M^\alpha$ presented here has already been outlined by Blasius in \cite{blasius}, Proof of Proposition 5.5.1. We discuss the realizations of $M^\alpha$ is greater detail and construct maps $M_B \to M_B^\alpha$, $\gamma \mapsto \gamma^{\balpha}$ as well as $M_\dR \to M^\alpha_\dR$, $\omega \mapsto \omega^{\balpha}$. Finally, the associated periods are expressed in terms of periods of $M$.

\subsection{Algebraic $\TT_{L,F}$-Modules}\label{section_alg_T_modules}
Let $L$ be a number field and $F$ a field of characteristic zero. Fix an algebraic closure $\ol{F}$ as well as an embedding $\ol{\QQ} \subset \ol{F}$. The chosen embedding $\ol{\QQ} \subset \ol{F}$ induces a homomorphism
\begin{align}\label{galois_group_homomorphism}
G_F \to G_\QQ
\end{align}
by restriction.
We consider the algebraic torus
\[
\TT_{L,F} \coloneqq \Res_{L/\QQ} \Gm \times_\QQ F
\]
over $F$. Its character group $X^\ast(\TT_{L,F})$ 
identifies with the free abelian group $I_L = \ZZ[J_L]$. The canonical action of $G_F$ is induced by the $G_\QQ$-action on $I_L$ via the homomorphism (\ref{galois_group_homomorphism}). Equivalently, we can regard $I_L$ as the free abelian group on 
$\Hom(L, \ol{F})$, on which $G_F$ acts in the obvious way.
An \emph{algebraic $\TT_{L,F}$-module} or \emph{representation of $\TT_{L,F}$} is an $F$-vector space $V$ which, for every $F$-algebra $R$, is equipped with a $R$-linear action
\[
\rho_R: \TT_{L,F}(R) = (L \otimes_\QQ R)^\times \to \Aut_{R}(V \otimes_F R)
\]
that is functorial in $R$.
The representations of algebraic groups of multiplicative type, such as $\TT_{L,F}$, can be described as follows:

\begin{proposition}\label{torus_decomposition}
Let $V$ be an algebraic $\TT_{L,F}$-module. There is a decomposition
\begin{align}\label{decomposition_along_characters_pt1}
V = \bigoplus_{\Theta \in G_F \bs I_L} V_\Theta
\end{align}
indexed by the $G_F$-orbits of the character group of $\TT_{L,F}$. Here $V_\Theta$ consists of those elements $v \in V$ such that there exists $\mu \in \Theta$ with
$\rho_{\ol{F}}(\ell)(v \otimes 1) = \ell^\mu \cdot (v \otimes 1)$ for all $\ell \in L^\times$.
\end{proposition}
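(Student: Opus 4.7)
The statement is the standard weight space decomposition for representations of a group of multiplicative type, carried out in a form that descends from $\ol{F}$ to $F$. My plan is to first perform the decomposition over $\ol{F}$, where $\TT_{L,F} \times_F \ol{F}$ splits as a product of copies of $\Gm$ indexed by $J_L$, and then to repackage this decomposition by grouping characters along their $G_F$-orbits in order to descend it to $F$.

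First, I would pass to $V_{\ol{F}} \coloneqq V \otimes_F \ol{F}$ and use that after base change $\TT_{L,F} \times_F \ol{F} \cong \Gm^{J_L}$ via the isomorphism $L \otimes_\QQ \ol{F} \cong \prod_{\sigma \in J_L} \ol{F}$, $\ell \otimes 1 \mapsto (\sigma(\ell))_\sigma$. For a representation of a split torus, the standard decomposition (which can be proved by exhibiting the idempotents that project onto each weight space, using that any finite collection of characters of a split torus is linearly independent) gives
\[
V_{\ol{F}} = \bigoplus_{\mu \in I_L} (V_{\ol{F}})_\mu,
\]
where $(V_{\ol{F}})_\mu = \{v \in V_{\ol{F}} \mid \rho_{\ol{F}}(\ell)(v) = \ell^\mu v \text{ for all } \ell \in L^\times\}$.

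Next, I would analyze how the $\ol{F}$-semilinear $G_F$-action on $V_{\ol{F}}$ (acting through the second tensor factor) interacts with this decomposition. Since the torus action $\rho$ is defined over $F$, one has $\tau \circ \rho_{\ol{F}}(\ell) = \rho_{\ol{F}}(\ell) \circ \tau$ for $\ell \in L^\times$ and $\tau \in G_F$; combining this with the rule $\tau(\ell^\mu v) = \ell^{\tau\mu}\, \tau(v)$ (which reflects exactly the Galois action on the character lattice $X^\ast(\TT_{L,F}) = I_L$), I obtain $\tau\bigl((V_{\ol{F}})_\mu\bigr) = (V_{\ol{F}})_{\tau\mu}$. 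Hence for each $G_F$-orbit $\Theta$ the subspace $W_\Theta \coloneqq \bigoplus_{\mu \in \Theta} (V_{\ol{F}})_\mu$ is $G_F$-stable, and $V_{\ol{F}} = \bigoplus_{\Theta \in G_F \bs I_L} W_\Theta$ is a $G_F$-stable decomposition.

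Finally, I would apply Galois descent: setting $V_\Theta \coloneqq (W_\Theta)^{G_F}$, the equality $V = (V_{\ol{F}})^{G_F} = \bigoplus_\Theta V_\Theta$ follows from Hilbert~90 in the form $V_{\ol{F}}^{G_F} \otimes_F \ol{F} \xrightarrow{\sim} V_{\ol{F}}$ applied to each summand. It remains to identify $V_\Theta$ with the set described in the statement: given $v \in V_\Theta \subset W_\Theta$, write $v = \sum_{\mu \in \Theta} v_\mu$ with $v_\mu \in (V_{\ol{F}})_\mu$; the condition that $v$ is fixed by $G_F$ together with the orbit relation above forces all $v_\mu$ to be nonzero simultaneously (if at all) and cyclically permuted, in particular $v \in W_\Theta$ means precisely that $\rho_{\ol{F}}(\ell)(v \otimes 1) = \ell^\mu (v\otimes 1)$ holds on the $\mu$-component for each $\mu \in \Theta$, which is equivalent to the stated condition since the $v_\mu$ determine each other by Galois conjugation. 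The main subtlety in the argument is precisely this last identification, i.e.\ checking that the descended subspace agrees with the intrinsically defined one in the statement; everything else is formal, given the classical decomposition for split tori.
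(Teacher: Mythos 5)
The paper itself gives no proof of this proposition: it is invoked as the standard weight-space decomposition for representations of a group of multiplicative type, so there is nothing to compare your argument against line by line. Your split-and-descend argument is the standard one and is sound: base change to $\ol{F}$ splits $\TT_{L,F}$, the weight decomposition $V_{\ol{F}} = \bigoplus_\mu (V_{\ol{F}})_\mu$ follows from linear independence of characters (using that $L^\times$ is Zariski dense in $\Res_{L/\QQ}\Gm$, so the condition on $\ell \in L^\times$ suffices), the computation $\tau(\ell^\mu) = \ell^{\tau\mu}$ shows $\tau$ carries $(V_{\ol{F}})_\mu$ to $(V_{\ol{F}})_{\tau\mu}$, and Galois descent (applied over the finite Galois subextension of $\ol{F}/F$ generated by the Galois closure of $L$, over which all weight spaces are defined) gives $V = \bigoplus_\Theta V_\Theta$ with $V_\Theta = V \cap \bigoplus_{\mu\in\Theta}(V_{\ol{F}})_\mu$. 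The one point you correctly sense as delicate, and should state more crisply, is the final identification: read literally, the proposition's description of $V_\Theta$ (a single $\mu \in \Theta$ with $\rho_{\ol{F}}(\ell)(v\otimes 1) = \ell^\mu(v\otimes 1)$ for all $\ell$) would force $V_\Theta = 0$ whenever $\Theta$ is not a singleton, since a nonzero $G_F$-fixed vector cannot lie in a single weight space $(V_{\ol{F}})_\mu$ unless $\mu$ is $G_F$-fixed. The description in the statement should be understood as characterizing $V_\Theta \otimes_F \ol{F}$ as the span of the $\mu$-eigenvectors for $\mu \in \Theta$, i.e.\ $V_\Theta = V \cap \bigoplus_{\mu \in \Theta}(V_{\ol{F}})_\mu$; this is the object your descent produces and is what all subsequent uses in the paper (e.g.\ the base-change formula for $V_\Theta$ and the construction of $V^\alpha$) require, so your proof establishes the intended claim. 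Rather than saying the eigenvector equation "holds on the $\mu$-component," simply define $V_\Theta$ by the intersection above and note that its base change to $\ol{F}$ recovers $\bigoplus_{\mu\in\Theta}(V_{\ol{F}})_\mu$ by descent.
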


Let us consider a field extension $F'/F$ and fix an algebraic closure $\ol{F'}$ of $F'$ as well as an embedding $\ol{F} \subset \ol{F'}$. Again, $G_{F'}$ acts on $I_L$ via the restriction map $G_{F'} \to G_F$ and in particular $G_{F'}$ acts on each $G_F$-orbit in $I_L$.
The $F'$-vector space $V' = V \otimes_F F'$ acquires the structure of an algebraic $\TT_{L,F'}$-module in a canonical way and 
for every $G_F$-orbit $\Theta$ of $I_L$, one has 
\begin{equation}\label{decomposition_base_change}
V_\Theta \otimes_F F' = \bigoplus_{\Theta' \in G_{F'} \bs \Theta} V'_{\Theta'}
\end{equation}
as $F'$-subspaces of $V \otimes_F F'$.

We now discuss a particular example, which will turn out to be useful in the next section for constructing direct factors in some motive.
Let $V$ be a $L \otimes_\QQ F$-module and fix an integer $d \geq 0$.
For any $F$-algebra $R$, there is a functorial action of $(L \otimes_\QQ R)^\times$ on $\Sym^d_F(V) \otimes_F R$ by letting an element $\ell \in L^\times$ act via $\Sym^d(\ell)$.
In this way, $\Sym^d_F(V)$ acquires the structure of an algebraic $\TT_{L,F}$-module.
By Proposition \ref{torus_decomposition}, we then obtain a decomposition
\begin{align}\label{decomposition_of_sym}
\Sym^d_F(V) = \bigoplus_{\Theta \in G_F \bs I_L} \Sym^d_F(V)_\Theta
\end{align}
indexed by the orbits of the action of $G_F$ on $I_L$ through the restriction map $G_F \to G_\QQ$.

\begin{definition}
In the following, let $\alpha \in I_L^+$ be a type and let $d = d(\alpha)$ denote its degree. Let $\Theta_\alpha = G_\QQ \cdot \alpha$ denote the $G_\QQ$-orbit of the type $\alpha$. For a $L \otimes F$-module $V$, we define $V^\alpha$ to be the direct summand
\[
V^\alpha = \bigoplus_{\Theta \in G_F \bs \Theta_\alpha} \Sym_F^d(V)_\Theta.
\]
of $\Sym_F^d(V)$ in (\ref{decomposition_of_sym}).
\end{definition}

Given an extension field $F' \supset F$ with algebraic closure $\ol{F'}$ and an embedding $\ol{F} \subset \ol{F'}$, we immediately deduce from (\ref{decomposition_base_change}) that 
\begin{equation}\label{change_of_fields}
(V \otimes_F F')^\alpha = V^\alpha \otimes_F F'.
\end{equation}

Let us briefly describe the decomposition in the split case, i.e.\ we assume that the Galois closure of $L$ in $\ol{F}$ already lies in $F$, so that the torus $\TT_{L,F}$ is split and $G_F$ acts trivially on $I_L$.
In this situation, let us fix the following notation: 
We write
\begin{align}\label{example_decomposition}
V = \bigoplus_{\sigma \in J_L} V(\sigma),
\end{align}
for the decomposition induced by the canonical decomposition $L \otimes F = \prod_{\sigma \in J_L} F_\sigma$, where $F_\sigma$ denotes the $L$-algebra $\sigma:L \to F$.
There are canonical isomorphisms
$V(\sigma) = V \otimes_{L \otimes F, \sigma} F$,
where the tensor product is taken with respect to the homomorphism $\sigma: L \otimes F \to F$, $\ell \otimes x \mapsto \sigma(\ell)x$. For any $\mu \in I^+_L$, we set
\begin{align} \label{sym_mu_notation}
\Sym^\mu(V) \coloneqq \bigotimes_{\sigma \in J_L} \Sym^{\mu(\sigma)}_F(V(\sigma))
\end{align}
and analogously for the symmetric tensors $\TSym$. Applying symmetric powers to (\ref{example_decomposition}) yields a canonical isomorphism
\begin{align}\label{decomposition_of_sym_split_case}
\Sym_F^d(V) &\cong \bigoplus_{\mu \in I_L^{+,d}} \Sym^\mu(V),
\end{align}
which coincides with the decomposition of $\Sym^d(V)$ according to Proposition \ref{torus_decomposition}.

From the above discussion we can now deduce that the $F$-vector space structure on $V^\alpha$ canonically extends to a $L^\alpha \otimes F$-module structure. Note that
$\Theta_\alpha = \{\iota \alpha \mid \iota \in J_{L^\alpha}\}$.
By (\ref{decomposition_of_sym_split_case}), we have the decomposition
\[
V^\alpha \otimes_F \ol{F} = \bigoplus_{\iota \in J_{L^\alpha}} \Sym^{\iota \alpha}(V \otimes_F \ol{F}).
\]
We now let an element $t \in L^\alpha$ act on $V^\alpha \otimes_F \ol{F}$ via multiplication by $\iota(t)$ on each 
summand $\Sym^{\iota \alpha}(V \otimes_F \ol{F})$ and one immediately checks that the action of $t$ on $V^\alpha \otimes_F \ol{F}$ is $G_F$-equivariant, so that it descends to an $F$-linear endomorphism of $V^\alpha$. This equips $V^\alpha$ with the structure of an $L^\alpha \otimes F$-module.
In fact, this construction gives rise to a functor $(-)^\alpha: \Mod_{L \otimes F} \to \Mod_{L^\alpha \otimes F}$. Any $L \otimes F$-linear homomorphism $f: V \to W$ induces a $\TT_{L,F}$-equivariant homomorphism $\Sym^d(f): \Sym^d_F(V) \to \Sym^d_F(W)$. In particular, $\Sym^d(f)$ is compatible with the decomposition (\ref{decomposition_of_sym}) and hence induces a homomorphism
\[
f^\alpha: V^\alpha \to W^\alpha,
\]
which is easily checked to be $L^\alpha \otimes F$-linear.

\subsection{Construction of $M^\alpha$} 
Let $F$ and $L$ be number fields and $M$ a CM-motive over $F$ with coefficients in $L$. Let $\alpha \in I_L^+$ and let $d \coloneqq d(\alpha)$. The goal of this section is to construct from $M$ a motive $M^\alpha$ in $\CM_F(L^\alpha)$. In the case where $M$ is of type $(F,\psi,L)$ for an algebraic Hecke character $\psi: \AA_F^\times \to L^\times$, the motive $M^\alpha$ will be of type $(F,\psi^\alpha,L^\alpha)$, where $\psi^\alpha$ denotes the Hecke character 
\[
\psi^\alpha : \AA_F^\times \to L^\times \xto{\alpha} (L^\alpha)^\times.
\]
For the construction, let us consider the motive
\[
N \coloneqq \Sym^d(R_{L/\QQ} M),
\]
where $R_{L/\QQ} M$ denotes restriction of coefficients. The idea is to apply the functor $(-)^\alpha$ from the previous section to obtain compatible direct summands in each realization and to obtain $M^\alpha$ as the resulting direct summand in $N$ by virtue of Proposition \ref{fully_faithful_realizations}.
Applying the functor $(-)^\alpha$ constructed in Section \ref{section_alg_T_modules} to the $L$-vector space $M_B$, we obtain a direct summand $M_B^\alpha$ in $N_B = \Sym^d_\QQ(M_B)$. Similarly, we obtain direct summands $M_\dR^\alpha$ in $N_\dR = \Sym_F^d(M_\dR)$ and, for each prime number $\ell$, a direct summand $M_\ell^\alpha$ in $N_\ell = \Sym^d_{\QQ_\ell}(M_\ell)$. 
Let $P_B$, $P_\dR$ and $P_\ell$ denote the corresponding idempotent endomorphisms of $N_B$, $N_\dR$ and $N_\ell$, respectively.

\begin{lemma}\label{projector_compatibilities} The projectors $P_B$, $P_\dR$ and $P_\ell$ satisfy the following properties:
\begin{enumerate}
\item The endomorphism $P_B \in \End(N_B)$ respects the Hodge structure, i.e.\ for all $p,q \in \ZZ$, one has
\[
P_B(N_B^{p,q}) \subset N_B^{p,q}.
\]
\item For each prime $\ell$, the endomorphism $P_\ell \in \End(N_\ell)$ is $G_F$-equivariant.
\item The comparison isomorphisms $I_\infty: N_\dR \otimes_F \CC \to N_B \otimes \CC$ and $I_\ell : N_B \otimes \QQ_\ell \to N_\ell$ are compatible with $P_\dR$, $P_B$ and $P_\ell$.
\end{enumerate}
\end{lemma}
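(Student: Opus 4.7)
The plan is to reduce all three compatibilities to a single observation: each projector $P_B$, $P_\dR$, $P_\ell$ is intrinsically defined from the $L$-module structure of the corresponding realization, and the $L$-action on $M$, coming from the ring map $L \to \End_{\CM_F(L)}(M)$, is by construction compatible with every structure on the realizations. Concretely, by functoriality one obtains an $L$-action on $M_B$ by morphisms of $\QQ$-Hodge structures, an $L$-action on $M_\dR$ by $F$-linear filtered endomorphisms, and an $L$-action on $M_\ell$ by $\QQ_\ell$-linear $G_F$-equivariant endomorphisms, all intertwined by $I_\infty$ and $I_\ell$. I would record this as the starting point.

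To verify (i), I would work after base change to $\CC$. Using the splitting $L \otimes \CC \cong \prod_{\sigma \in J_L} \CC$ one obtains $M_B \otimes \CC = \bigoplus_\sigma M_B(\sigma)$, and the decomposition (\ref{decomposition_of_sym_split_case}) gives
\[
\Sym^d_\QQ(M_B) \otimes \CC \;=\; \bigoplus_{\mu \in I_L^{+,d}} \Sym^\mu(M_B \otimes \CC).
\]
Since the $L$-action on $M_B$ preserves the Hodge bigrading, each $M_B(\sigma)$ inherits one, and hence so does every $\Sym^\mu(M_B \otimes \CC)$. The projector $P_B \otimes \id_\CC$ selects exactly the summands indexed by $\mu \in \Theta_\alpha$, so it respects the bigrading. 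Because $P_B$ is already defined over $\QQ$, this suffices to conclude that it is a morphism of Hodge structures.

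For (ii), the same base-change strategy over $\ol{\QQ_\ell}$ applies. The key input is that the $G_F$-action on $M_\ell$, being $\QQ_\ell$-linear and commuting with the $L$-action, extends $\ol{\QQ_\ell}$-linearly to an action on $M_\ell \otimes \ol{\QQ_\ell}$ that still commutes with the split $L \otimes \ol{\QQ_\ell}$-action. Thus $G_F$ preserves each character piece $M_\ell(\sigma)$, hence each $\Sym^\mu(M_\ell \otimes \ol{\QQ_\ell})$, and therefore the summand cut out by $\mu \in \Theta_\alpha$; so $P_\ell \otimes \id$ is $G_F$-equivariant, and by descent so is $P_\ell$. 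For (iii), I would note that both $I_\infty$ and $I_\ell$ are $L$-linear by definition of morphisms in $\cR_L$ with coefficients, so their induced maps on $\Sym^d$ are $L \otimes \CC$- respectively $L \otimes \QQ_\ell$-linear; they identify the character decompositions on either side, and hence commute with $P_\dR$/$P_B$ and $P_B$/$P_\ell$ respectively.

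None of the three steps involves a serious difficulty: the proof is a routine Galois descent from a splitting field combined with the formal compatibilities built into $\CM_F(L)$. The only point that deserves mild care is the bookkeeping of the different base fields ($\QQ$ for Betti, $F$ for de Rham, $\QQ_\ell$ for the $\ell$-adic realization), together with the fact that the projectors are a priori only defined over those base fields, which is precisely why one passes through a splitting field in each case before concluding by descent.
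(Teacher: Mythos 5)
Your proposal is correct and follows essentially the same route as the paper: in each realization one base changes to a splitting field, decomposes $\Sym^d$ into the character spaces $\Sym^\mu$, and uses that the Hodge bigrading, the $G_F$-action, and the comparison isomorphisms are all $L$-linear, hence compatible with that decomposition and with the projector onto the summands indexed by the orbit of $\alpha$. No gaps to report.
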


\begin{proof}
By (\ref{decomposition_base_change}) and (\ref{decomposition_of_sym_split_case}), we have a decomposition
\begin{align}\label{random_decomposition_used_once}
N_B \otimes \CC = \bigoplus_{\mu \in I_L^{+,d}} \Sym^\mu(M_B \otimes \CC)
\end{align}
where 
$\Sym^\mu(M_B \otimes \CC) = \bigotimes_{\sigma \in J_L} \Sym_\CC^{\mu(\sigma)}(M_B \otimes_{L,\sigma} \CC)$, and $P_B$ is the projector onto the summand indexed by the Galois orbit of $\alpha$.
Since the Hodge decomposition is $L$-linear, the decomposition(\ref{random_decomposition_used_once}) respects the Hodge bigradings.
For (ii), one has to show that the direct summand $M_\ell^\alpha$ is stable under the action of $G_F$. It suffices to check this after base change to $\ol{\QQ}_\ell$. Then from (\ref{decomposition_base_change}) and (\ref{decomposition_of_sym_split_case}), one obtains
\[
N_\ell \otimes_{\QQ_\ell} \ol{\QQ}_\ell = \bigoplus_{\mu \in I_L^{+,d}} \Sym^\mu (M_\ell \otimes \ol{\QQ}_\ell),
\]
where
$\Sym^\mu (M_\ell \otimes \ol{\QQ}_\ell) = \bigotimes_{\sigma \in J_L} \Sym_{\ol{\QQ}_\ell}^{\mu(\sigma)} ( M_\ell \otimes_{L \otimes \QQ_\ell, \sigma} \ol{\QQ}_\ell)$.
Since the $G_F$-representation $M_\ell$ is $L$-linear, this is a decomposition of $G_F$-representations and the assertion follows. 
Finally, since $I_\infty: N_\dR \otimes_F \CC \to N_B \otimes \CC$ identifies with the $d$th $\CC$-linear symmetric power of the $L \otimes \CC$-linear comparison isomorphism $I_{\infty,M}$, it is a $\TT_{L,\CC}$-equivariant isomorphism and by construction it restricts to the isomorphism $I_{\infty,M}^\alpha: (M_\dR \otimes_F \CC)^\alpha \to (M_B \otimes \CC)^\alpha$. The desired compatibility now follows from (\ref{change_of_fields}).
The compatibility of $I_\ell$ with $P_B$ and $P_\ell$ follows completely analogously since $I_\ell: N_B \otimes \QQ_\ell \to N_\ell$ is induced by the $L \otimes \QQ_\ell$-linear isomorphism $I_{\ell,M}: M_B \otimes \QQ_\ell \to M_\ell$ on $d$th symmetric powers.
\end{proof}

From the above discussion and Proposition \ref{fully_faithful_realizations} we obtain an idempotent endomorphism
\[
P \in \End(\Sym^d(M_\QQ))
\]
which agrees with the projectors $P_B$, $P_\dR$ and $P_\ell$ in the respective realizations.

\begin{definition}
Let $M$ be a CM-motive over $F$ with coefficients in $L$ and let $\alpha \in I_L^+$ be of degree $d$. We define $M^\alpha = P \cdot \Sym^d(M_\QQ)$.
\end{definition}

\begin{lemma}
The motive $M^\alpha$ canonically acquires coefficients in the number field $L^\alpha$.
\end{lemma}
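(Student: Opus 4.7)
The plan is to exhibit a ring homomorphism $L^\alpha \to \End(M^\alpha)$ by constructing, for each $t \in L^\alpha$, compatible endomorphisms in the Betti, de Rham, and finite adelic realizations and then gluing them via Proposition \ref{fully_faithful_realizations}.

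First I would observe that the construction at the end of Section \ref{section_alg_T_modules} endows, functorially in the input, each realization of $M^\alpha$ with a canonical $L^\alpha \otimes R$-module structure, where $R$ is the coefficient ring of the relevant realization. Concretely, applying $(-)^\alpha$ to the $L$-vector space $M_B$ yields the $L^\alpha$-module $M_B^\alpha$; applied to the $L \otimes F$-module $M_\dR$ it yields the $L^\alpha \otimes F$-module $M_\dR^\alpha$; and applied to the $L \otimes \QQ_f$-module $M_f$ it yields the $L^\alpha \otimes \QQ_f$-module $M_f^\alpha$ (and similarly $\ell$-adically for each prime $\ell$). Thus, for each $t \in L^\alpha$, multiplication by $t$ defines $\QQ$-linear (resp.\ $F$-linear, resp.\ $\QQ_f$-linear) endomorphisms $[t]_B$, $[t]_\dR$, $[t]_f$ of the three realizations of $M^\alpha$.

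Next I would verify the compatibilities required to apply Proposition \ref{fully_faithful_realizations}. The map $[t]_B$ respects the Hodge bigrading because the Hodge decomposition on $M_B \otimes \CC$ is $L \otimes \CC$-linear, so after passing to $\Sym^d$ the decomposition (\ref{random_decomposition_used_once}) is by sub-Hodge-structures; the $L^\alpha$-action permutes (in fact preserves) the summands indexed by $\Theta_\alpha$ by scalars. The map $[t]_f$ is $G_F$-equivariant: checking this after extension to $\ol{\QQ}_\ell$ for each $\ell$, the decomposition of $M_\ell^\alpha \otimes \ol{\QQ}_\ell$ into isotypic components under $\TT_{L,\ol{\QQ}_\ell}$ is $G_F$-stable (since the $\TT_{L,\QQ_\ell}$-module structure on $M_\ell$ is $L$-linear), and the $L^\alpha$-action is defined so as to descend from the canonical $G_F$-equivariant action on this decomposition. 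Finally, the comparison isomorphism $I_\infty$ is $L \otimes \CC$-linear and $I_f$ is $L \otimes \QQ_f$-linear, so the functoriality of $(-)^\alpha$ implies that they intertwine $[t]_\dR$ with $[t]_B$ and $[t]_B$ with $[t]_f$, respectively.

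Having established these compatibilities, Proposition \ref{fully_faithful_realizations} produces a unique motivic endomorphism $[t]: M^\alpha \to M^\alpha$ in $\CM_F$ whose realizations are $[t]_B$, $[t]_\dR$, $[t]_f$. The assignment $t \mapsto [t]$ is a ring homomorphism $L^\alpha \to \End_{\CM_F}(M^\alpha)$ since it is so in each realization and the embedding into realizations is faithful. The main point, and the only mildly subtle one, is the $G_F$-equivariance of $[t]_f$; everything else is essentially a functoriality statement for the construction $V \mapsto V^\alpha$ of Section \ref{section_alg_T_modules} applied in turn to each realization of $M$.
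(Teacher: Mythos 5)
Your proof is correct and follows essentially the same route as the paper: both endow each realization of $M^\alpha$ with the $L^\alpha$-module structure coming from the functor $(-)^\alpha$ of Section \ref{section_alg_T_modules}, check that the comparison isomorphisms are $L^\alpha$-linear (being obtained by applying $(-)^\alpha$ to the $L$-linear isomorphisms $I_{\infty,M}$ and $I_{\ell,M}$), and conclude via Proposition \ref{fully_faithful_realizations}. You merely make explicit the Hodge- and $G_F$-compatibilities of the individual endomorphisms $[t]$, which the paper leaves implicit in Lemma \ref{projector_compatibilities}.
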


\begin{proof}
Recall that each of the realizations $M^\alpha_B$, $M^\alpha_\dR$ and $M^\alpha_\ell$ has the structure of a module over $L^\alpha$, $L^\alpha \otimes_\QQ F$ and $L^\alpha \otimes_\QQ \QQ_\ell$, respectively. Moreover, by the proof of Lemma \ref{projector_compatibilities}, the comparison isomorphism $I_\infty: M_\dR^\alpha \otimes_F \CC \to M_B^\alpha \otimes \CC$ is given, via the identifications of Lemma \ref{change_of_fields}, by applying the functor $(-)^\alpha$ to the $L \otimes \CC$-linear isomorphism $I_{\infty,M}$. In particular, it is $L^\alpha \otimes \CC$-linear. Similarly, the comparison isomorphism $I_\ell: M^\alpha_B \otimes \QQ_\ell \to M_\ell^\alpha$ is $L^\alpha \otimes \QQ_\ell$-linear.
Hence, the motive $M^\alpha$ acquires coefficients in $L^\alpha$ and in each realization the induced $L^\alpha$-module structure is given as in subsection \ref{section_alg_T_modules}.
\end{proof}

Next, we show that $M^\alpha$ is a motive of type $(F,\psi^\alpha,L^\alpha)$. Let us first make the following observation about algebraic homomorphisms:
Let $\alpha_\ell: (L \otimes \ol{\QQ}_\ell)^\times \to (L^\alpha \otimes \ol{\QQ}_\ell)^\times$ denote the homomorphism induced $\alpha$. Then, under the usual identification $L^\alpha \otimes \ol{\QQ}_\ell = \ol{\QQ}_\ell^{J_{L^\alpha}}$, the homomorphism $\alpha_\ell$ is given by 
\begin{align}\label{lpart_of_alpha}
\alpha_\ell(y) = \Bigl( \prod_{\sigma \in J_L} \sigma(y)^{\iota \alpha(\sigma)} \Bigr)_{\iota \in J_{L^\alpha}},
\end{align}
where $y \in (L \otimes \ol{\QQ}_\ell)^\times$ and $\sigma: L \otimes \ol{\QQ}_\ell \to \ol{\QQ}_\ell$ denotes the homomorphism given by $\ell \otimes z \mapsto \sigma(\ell)z$.

\begin{proposition}
Let $M$ be a motive of type $(F,\psi,L)$. Then $M^\alpha$ is of type $(F,\psi^\alpha,L^\alpha)$.
\end{proposition}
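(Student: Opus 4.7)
The plan is to verify the two defining conditions of being of type $(F,\psi^\alpha,L^\alpha)$: first that $M^\alpha$ has rank $1$ as a motive with coefficients in $L^\alpha$, and second that for every $\tau \in G_F$ and $\ell \in F_f^\times$ with $r_F(\ell) = \tau|_{F^\ab}$, the element $\tau$ acts on $M^\alpha_f$ by multiplication by $(\psi^\alpha)_a(\ell)^{-1}\psi^\alpha(\ell) \in (L^\alpha)_f^\times$.

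For the rank assertion, I would extend scalars on the Betti realization to $\ol{\QQ}$. Since $M_B$ is free of rank $1$ over $L$, the split-case decomposition (\ref{decomposition_of_sym_split_case}) combined with (\ref{change_of_fields}) gives
\[
M^\alpha_B \otimes_\QQ \ol{\QQ} \;=\; \bigoplus_{\mu \in \Theta_\alpha} \Sym^\mu(M_B \otimes \ol{\QQ}),
\]
and each direct summand is $1$-dimensional over $\ol{\QQ}$, because each factor $M_B \otimes_{L,\sigma} \ol{\QQ}$ is. Since $\Theta_\alpha = \{\iota\alpha \mid \iota \in J_{L^\alpha}\}$ has cardinality $[L^\alpha:\QQ]$, we conclude $\dim_\QQ M^\alpha_B = [L^\alpha:\QQ]$. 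Combined with the canonical $L^\alpha$-module structure constructed in Section \ref{section_alg_T_modules}, this shows $M^\alpha_B$ is free of rank $1$ over $L^\alpha$, so $M^\alpha$ has rank $1$ as a motive with coefficients in $L^\alpha$.

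For the Galois formula, fix $\tau \in G_F$ and $\ell \in F_f^\times$ with $r_F(\ell) = \tau|_{F^\ab}$. Set $y \coloneqq \psi_a(\ell)^{-1}\psi(\ell) \in L_f^\times = \TT_{L,\QQ_f}(\QQ_f)$. By hypothesis, $\tau$ acts on $M_f$ as multiplication by $y$, and hence on $N_f = \Sym^d_{\QQ_f}(M_f)$ via the canonical $\TT_{L,\QQ_f}$-representation. The projector $P_f$ cutting out $M^\alpha_f$ is $\TT_{L,\QQ_f}$-equivariant by construction, so $\tau$ preserves $M^\alpha_f$ and acts there through the restricted $\TT_{L,\QQ_f}$-action. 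The task is then to identify this action with multiplication by $\alpha_f(y) \in (L^\alpha)_f^\times$.

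To do this, I would work prime by prime and compare on both sides after base change to $\ol{\QQ}_\ell$. By (\ref{change_of_fields}) and (\ref{decomposition_of_sym_split_case}),
\[
M^\alpha_\ell \otimes_{\QQ_\ell} \ol{\QQ}_\ell \;=\; \bigoplus_{\iota \in J_{L^\alpha}} \Sym^{\iota\alpha}(M_\ell \otimes \ol{\QQ}_\ell),
\]
and the element $y$ acts on the $\iota\alpha$-summand by the scalar $y^{\iota\alpha} = \prod_{\sigma \in J_L} \sigma(y)^{\iota\alpha(\sigma)}$. By the explicit formula (\ref{lpart_of_alpha}), this scalar is precisely the $\iota$-component of $\alpha_\ell(y) \in (L^\alpha \otimes \ol{\QQ}_\ell)^\times$. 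On the other hand, the $L^\alpha$-structure on $M^\alpha$ is defined so that $t \in L^\alpha$ acts on $\Sym^{\iota\alpha}(M_\ell\otimes\ol{\QQ}_\ell)$ by multiplication by $\iota(t)$. Hence the action of $y$ coincides with the $(L^\alpha)_\ell$-multiplication by $\alpha_\ell(y)$, and patching over all $\ell$ yields that $\tau$ acts on $M^\alpha_f$ by $\alpha_f(y)$. Since $\alpha_f$ is multiplicative and $(\psi^\alpha)_a = \alpha \circ \psi_a$, $\psi^\alpha = \alpha \circ \psi$, we get $\alpha_f(y) = (\psi^\alpha)_a(\ell)^{-1}\psi^\alpha(\ell)$, which is the required identity. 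The only genuine bookkeeping obstacle is matching the weight-$\iota\alpha$ scalar action on the symmetric power summand with the algebraic homomorphism $\alpha$ evaluated on ideles; once one writes both in terms of the $J_{L^\alpha}$-components this is immediate from (\ref{lpart_of_alpha}).
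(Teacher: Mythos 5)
Your proposal is correct and follows essentially the same route as the paper's own proof: the rank computation via the $\ol{\QQ}$-decomposition of $M^\alpha_B$ into one-dimensional summands indexed by the $G_\QQ$-orbit of $\alpha$, and the identification of the Galois action on $M^\alpha_\ell \otimes \ol{\QQ}_\ell$ with multiplication by $\alpha_\ell(\psi_a(x)^{-1}\psi(x))$ via (\ref{lpart_of_alpha}). The only (cosmetic) issue is that you use $\ell$ both for the idele in $F_f^\times$ and for the rational prime; the paper writes $x$ for the idele to avoid this clash.
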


\begin{proof}
By Lemma \ref{decomposition_base_change} and (\ref{decomposition_of_sym_split_case}), one has
\[
M_B^\alpha \otimes \ol{\QQ} = \bigoplus_{\mu \in G_\QQ \cdot \alpha} \Sym^\mu(M_B \otimes \ol{\QQ})
\]
where each summand 
is a $1$-dimensional $\ol{\QQ}$-vector space.
As the orbit $G_\QQ \cdot \alpha$ is of cardinality $[L^\alpha:\QQ]$, it follows that $M_B^\alpha$ is a $1$-dimensional $L^\alpha$-vector space and hence $M^\alpha$ is a motive of rank $1$.
We now show that, for a fixed rational prime $\ell$, the $G_F$-representation $M^\alpha_\ell$ is determined by the Hecke character $\psi^\alpha$. To check this, we may pass to the algebraic closure $\ol{\QQ}_\ell$. Then, by Proposition \ref{decomposition_base_change}, (\ref{decomposition_of_sym_split_case}) and by using that the $G_\QQ$-orbit of $\alpha$ is given by $\{\iota \alpha \mid \iota \in J_{L^\alpha}\}$, we have
\[
M^\alpha_\ell \otimes_{\QQ_\ell} \ol{\QQ}_\ell = \bigoplus_{\iota \in J_{L^\alpha}} \Sym^{\iota \alpha}(M_\ell \otimes_{\QQ_\ell} \ol{\QQ}_\ell),
\]
and the module $L^\alpha \otimes \ol{\QQ}_\ell$ acts in the obvious way via $L^\alpha \otimes \ol{\QQ}_\ell \cong \ol{\QQ}_\ell^{J_{L^\alpha}}$. Let $\tau \in G_F$ and $x \in F_f^\times$ such that $r_F(x) = \tau|_{F^\ab}$. 
Since $M$ is of type $(F,\psi,L)$, the action of $\tau$ on $M_\ell \otimes_{\QQ_\ell} \ol{\QQ}_\ell$ is given via multiplication by $\psi^{-1}_{a,\ell}(x) \psi(x) \in (L \otimes \ol{\QQ}_\ell)^\times$. Thus, $\tau$ acts on $M^\alpha_\ell \otimes_{\QQ_\ell} \ol{\QQ}_\ell$ via multiplication by the element
\[
\Bigl( \prod_{\sigma \in J_L} \sigma \bigl( \psi^{-1}_{a,\ell}(x) \psi(x) \bigr)^{\iota \alpha(\sigma)} \Bigr)_{\iota \in J_{L^\alpha}}
\]
in $\ol{\QQ}_\ell^{J_{L^\alpha}}$
and, by (\ref{lpart_of_alpha}) above, this corresponds to the element
\[
\alpha_\ell (\psi^{-1}_{a,\ell}(x) \psi(x)) = \psi^{-\alpha}_{a,\ell}(x) \cdot \psi^\alpha(x)
\]
in $(L^\alpha \otimes \ol{\QQ}_\ell)^\times$.
\end{proof}

\subsection{Construction of elements in $M_B^\alpha$ and $M_\dR^\alpha$}
Starting from elements in $M_B$ and $M_\dR$, we now explain how to construct elements in $M^\alpha_B$ and $M^\alpha_\dR$. In the de Rham realization, it is sufficient for our purposes to restrict to the case where the field of definition $F$ contains the Galois closure of $L$, so that the torus $\TT_{L,F}$ is split. In this case, the de Rham realization becomes easier to manage. More generally, one could define a map $V \to V^\alpha$ for any $L \otimes F$-module $V$.

\subsubsection*{Betti realization}
As before, let $F$ and $L$ be number fields and $M$ be a motive of type $(F,\psi,L)$. Let $\gamma \in M_B$. We will now construct elements $\gamma^{\boldsymbol{\alpha}} \in M_B^\alpha$ via Galois descent. Recall the notation introduced in (\ref{sym_mu_notation}) and that
\[
M_B^\alpha \otimes \ol{\QQ} = \bigoplus_{\iota \in J_{L^\alpha}} \Sym^{\iota \alpha}(M_B \otimes \ol{\QQ}).
\]
For every $\iota \in J_{L^\alpha}$, we define
\[
\gamma^{\iota \alpha} \coloneqq \bigotimes_{\sigma \in J_L} (\gamma \otimes_{\sigma} 1_{\ol{\QQ}})^{\iota \alpha(\sigma)} \in \Sym^{\iota\alpha}(M_B \otimes \ol{\QQ})
\]
and we set
$
\gamma^{\boldsymbol{\alpha}} \coloneqq (\gamma^{\iota\alpha})_{\iota \in J_{L^\alpha}} \in M_B^\alpha \otimes \ol{\QQ}$.
Here, we write $\gamma \otimes_\sigma 1_{\ol{\QQ}}$ to emphasize that the element lies in $M_B \otimes_{L,\sigma} \ol{\QQ}$. 
The action of an element $\tau \in G_\QQ$ sends $\gamma^{\iota \alpha}$ to $\bigotimes_{\sigma \in J_L} (\gamma \otimes_{\tau \sigma} 1)^{\iota\alpha(\sigma)} = \bigotimes_{\sigma \in J_L} (\gamma \otimes_\sigma 1)^{\tau \iota \alpha(\sigma)} = \gamma^{\tau \iota \alpha}$. It follows that $\tau(\gamma^{\boldsymbol{\alpha}}) = \gamma^{\boldsymbol{\alpha}}$.
Hence, we obtain an element
\begin{align}\label{betti_element_in_m_alpha}
\gamma^{\boldsymbol{\alpha}} \in M_B^\alpha.
\end{align}

\subsubsection*{De Rham realization}
For simplicity, we restrict ourselves to the case where $F$ contains the Galois closure of $L$. Then the torus $\TT_{L,F}$ is split and we may regard every embedding of $L$ to take values in $F$. The de Rham realization of $M^\alpha$ is given by
\[
M_\dR^\alpha = \bigoplus_{\iota \in J_{L^\alpha}} \Sym^{\iota \alpha}(M_\dR)
\]
Now, if we are given $\omega \in M_\dR$ and $\iota \in J_{L^\alpha}$, we define
\[
\omega^{\iota \alpha} \coloneqq \bigotimes_{\sigma \in J_L} (\omega \otimes_\sigma 1_F)^{\iota \alpha(\sigma)} \in \Sym^{\iota \alpha}(M_\dR)
\]
and set
\begin{align}\label{de_rham_element_in_m_alpha}
\omega^{\boldsymbol{\alpha}} \coloneqq (\omega^{\iota\alpha})_{\iota \in J_{L^\alpha}} \in M_\dR^\alpha.
\end{align}

\subsection{Periods of $M^\alpha$}
Let $M$ be a motive of type $(F,\psi,L)$ and assume that $F$ contains the Galois closure of $L$. 
Let $\gamma \in M_B$ be an $L$-basis and $\omega \in M_\dR$. Consider the $L \otimes \CC$-linear comparison isomorphism
\[
I_{\infty, M} : M_\dR \otimes_F \CC \to M_B \otimes \CC
\]
attached to the motive $M$. There is a unique element $\Omega \in L \otimes \CC$ satisfying
\begin{align}\label{omega_period}
I_{\infty,M}(\omega) = \Omega \cdot \gamma.
\end{align}

\begin{notation}\label{OmegaNotation}
Let $\alpha \in I_L^+$.
For an element $\Omega \in L \otimes \CC$ and $\iota \in J_{L^\alpha}$, we define
\[
\Omega^{\iota \alpha} \coloneqq \prod_{\sigma \in J_L} \Omega(\sigma)^{\iota \alpha(\sigma)},
\]
where $\Omega = (\Omega(\sigma))_{\sigma \in J_L}$ under the canonical isomorphism $ L \otimes \CC = \CC^{J_L}$. Moreover, we put
\[
\Omega^{\boldsymbol{\alpha}} \coloneqq (\Omega^{\iota \alpha})_{\iota \in J_{L^\alpha}} \in L^\alpha \otimes \CC,
\]
where again, we use the identification $L^\alpha \otimes \CC = \CC^{J_{L^\alpha}}$.
\end{notation}

\begin{proposition}\label{MAlphaPeriod}
Let $M$ be a motive of type $(F,\psi,L)$ and assume that $F$ contains the Galois closure of $L$. Let $\gamma \in M_B$ be an $L$-basis, $\omega \in M_\dR$ and $\Omega \in L \otimes \CC$ as in (\ref{omega_period}). Then 
\[
I_{\infty, M^\alpha}(\omega^{\boldsymbol{\alpha}}) = \Omega^{\boldsymbol{\alpha}} \cdot \gamma^{\boldsymbol{\alpha}}.
\]
\end{proposition}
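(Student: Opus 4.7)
The plan is to reduce the claim to a term-by-term computation using the explicit split-case decompositions and the fact that, by construction, $I_{\infty, M^\alpha}$ is nothing but the restriction of $\Sym^{d(\alpha)}(I_{\infty, M})$ to the direct summand cut out by the functor $(-)^\alpha$. The hypothesis that $F$ contains the Galois closure of $L$ (and hence of $L^\alpha$) is what allows one to work entirely with the split decomposition (\ref{decomposition_of_sym_split_case}) in each realization, with no further Galois descent required.

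First, I would unpack the comparison isomorphism for $M$ using the decomposition $M_\dR \otimes_F \CC = \bigoplus_{\sigma \in J_L} M_\dR(\sigma) \otimes_F \CC$ and $M_B \otimes \CC = \bigoplus_{\sigma \in J_L} M_B \otimes_{L,\sigma} \CC$. Since $I_{\infty, M}$ is $L \otimes \CC$-linear, it decomposes as $I_{\infty, M} = \bigoplus_\sigma (I_{\infty, M})_\sigma$ with
\[
(I_{\infty, M})_\sigma : M_\dR(\sigma) \otimes_F \CC \xrightarrow{\sim} M_B \otimes_{L,\sigma} \CC,
\]
and the defining relation $I_{\infty, M}(\omega) = \Omega \cdot \gamma$ translates into $(I_{\infty, M})_\sigma(\omega \otimes_\sigma 1_F) = \Omega(\sigma) \cdot (\gamma \otimes_\sigma 1)$ for every $\sigma \in J_L$.

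Next, by Lemma \ref{projector_compatibilities} and (\ref{change_of_fields}), the isomorphism $I_{\infty, M^\alpha}$ is obtained from $\Sym^{d(\alpha)}(I_{\infty, M})$ by restricting to the $(-)^\alpha$-summand. In the split case this summand decomposes as
\[
M_\dR^\alpha \otimes_F \CC = \bigoplus_{\iota \in J_{L^\alpha}} \bigotimes_{\sigma \in J_L} \Sym^{\iota\alpha(\sigma)}_\CC (M_\dR(\sigma) \otimes_F \CC),
\]
and analogously for $M_B^\alpha \otimes \CC$. Hence $I_{\infty, M^\alpha}$ is, component by component in $\iota \in J_{L^\alpha}$, the tensor product over $\sigma$ of the $\iota\alpha(\sigma)$-th symmetric powers of $(I_{\infty, M})_\sigma$. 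Applied to $\omega^{\iota\alpha} = \bigotimes_\sigma (\omega \otimes_\sigma 1_F)^{\iota\alpha(\sigma)}$, this yields
\[
\bigotimes_{\sigma \in J_L} \bigl( \Omega(\sigma) \cdot (\gamma \otimes_\sigma 1) \bigr)^{\iota\alpha(\sigma)} = \Bigl( \prod_{\sigma \in J_L} \Omega(\sigma)^{\iota\alpha(\sigma)} \Bigr) \cdot \bigotimes_{\sigma \in J_L} (\gamma \otimes_\sigma 1)^{\iota\alpha(\sigma)} = \Omega^{\iota\alpha} \cdot \gamma^{\iota\alpha},
\]
by the very definitions of $\Omega^{\iota\alpha}$ and $\gamma^{\iota\alpha}$. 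Summing over $\iota \in J_{L^\alpha}$ and recognising the result as $\Omega^{\boldsymbol{\alpha}} \cdot \gamma^{\boldsymbol{\alpha}}$ under the identification $L^\alpha \otimes \CC = \CC^{J_{L^\alpha}}$ gives the claim.

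In short, the proof is entirely a bookkeeping exercise after one has matched up the decompositions on the two sides. There is no real obstacle; the only point requiring care is to verify that $I_{\infty, M^\alpha}$ truly is the tensor product of symmetric powers of the $(I_{\infty, M})_\sigma$ on each $\iota$-summand, which is precisely the content of applying the functor $(-)^\alpha$ to $I_{\infty, M}$ after base change to $\CC$ and restricting to the relevant direct factor, as already established in the construction of $M^\alpha$.
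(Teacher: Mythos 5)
Your proof is correct and follows essentially the same route as the paper: decompose $I_{\infty,M}$ along $L\otimes\CC=\CC^{J_L}$, observe that $I_{\infty,M^\alpha}$ is obtained by applying the functor $(-)^\alpha$ to $I_{\infty,M}$, and read off the identity from the definitions of $\omega^{\boldsymbol{\alpha}}$, $\gamma^{\boldsymbol{\alpha}}$ and $\Omega^{\boldsymbol{\alpha}}$. The paper's proof is just a more compressed version of the same computation.
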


\begin{proof}
According to the canonical isomorphism $L \otimes \CC = \CC^{J_L}$, the isomorphism $I_{\infty,M}$ decomposes into $\CC$-linear isomorphisms
$
I_{\infty,\sigma}: M_\dR \otimes_{F \otimes L,\sigma} \CC \to M_B \otimes_{L,\sigma} \CC$,
which satisfy
\[
I_{\infty,\sigma}(\omega \otimes_\sigma 1_\CC) = \Omega_\sigma \cdot (\gamma \otimes_\sigma 1_\CC).
\]
Since the $L^\alpha \otimes \CC$-linear isomorphism $I_{\infty, M^\alpha}$ is induced by the $L \otimes \CC$-linear isomorphism $I_{\infty,M}$ by applying the functor $(-)^\alpha$, it now follows from the construction of the elements $\gamma^{\balpha}$ and $\omega^{\balpha}$ that
$I_{\infty,M^\alpha}(\omega^{\balpha}) = \Omega^{\balpha} \cdot \gamma^{\balpha}$, as desired.
\end{proof}

\section{The reflex motive}\label{section_6_reflex_motive}
We recall the reflex motive $M(\Xi)$ of Blasius and state his period relation which makes it possible to recover the Deligne period $c^+(\chi)$ from the reflex motive. Afterwards, we use the results of Section \ref{section_5_M_alpha} to obtain an alternative presentation of $M(\Xi)$ in terms of an abelian variety $A$ with complex multiplication by $L$. Finally, we show that the special element in the Betti realization in Blasius' period relation can be expressed explicitly in terms of Betti classes of $A$.

\subsection{Blasius' period relation}\label{setup_hecke_character}
In the following, let $L$ be a totally imaginary field containing a CM-field. Let $K$ be the maximal CM-subfield of $L$ and $\Phi_K$ a CM-type of $K$. Let $E$ denote the reflex field of $(K,\Phi)$ and let $\Phi^\ast \in I_E$ denote the reflex CM-type on $E$. Let $\Phi = N_{L/K}^\ast(\Phi_K)$ be the CM-type of $L$ lifted from $\Phi_K$. Let $\chi_a \in I_L$ be critical with respect to $\Phi$, i.e.\ $\chi_a$ is of Hecke character type and
\[
\chi_a = \beta - \alpha
\]
with $\alpha \in I_\Phi$ and $\beta \in I_{\ol{\Phi}}$ such that 
$\alpha(\sigma) \geq 1$ and $\beta(\ol{\sigma}) \geq 0$
for all $\sigma \in \Phi$, cf.\ Definition \ref{criticality_def}.
In particular, there exist $\alpha_0 \in I_{\Phi_K}$, $\beta_0 \in I_{\ol{\Phi}_K}$ such that 
$\alpha = N_{L/K}^\ast(\alpha_0)$ and $\beta = N_{L/K}^\ast(\beta_0)$
and there is an integer $w \in \ZZ$ such that 
\begin{align}\label{weight_condition}
\beta(\ol{\sigma}) - \alpha(\sigma) = w 
\end{align}
for all $\sigma \in \Phi$, i.e.\ $c \beta - \alpha = w\Phi$.

\begin{lemma} Let $\chi_a = \beta - \alpha$ be of Hecke character type as above.
\begin{enumerate}
\item The field $L^\alpha$ is a CM-field containing the reflex field $E$ of $(K,\Phi_K)$.
\item One has $L^\beta \subset L^\alpha$.
\end{enumerate}
\end{lemma}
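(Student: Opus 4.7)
The plan is to reduce to the CM-subfield $K$ and then to exploit the centrality of complex conjugation in the Galois group of a Galois CM-field. Writing $\alpha = N_{L/K}^\ast(\alpha_0)$ and observing that the pullback $N_{L/K}^\ast : I_K \to I_L$ is injective and $G_\QQ$-equivariant, the stabilizer of $\alpha$ in $G_\QQ$ coincides with that of $\alpha_0$, and hence $L^\alpha = K^{\alpha_0}$; the analogous reduction gives $L^\beta = K^{\beta_0}$.

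For (i), I would first show $E \subset L^\alpha$. Since $\alpha_0(\sigma_0) \geq 1$ for every $\sigma_0 \in \Phi_K$, the support of $\alpha_0$ equals $\Phi_K$, so any $\tau$ fixing $\alpha_0$ must permute $\Phi_K$ and therefore lies in $\operatorname{Stab}(\Phi_K) = G_E$; consequently $\operatorname{Stab}(\alpha_0) \subset G_E$, i.e.\ $E \subset K^{\alpha_0}$. To see that $L^\alpha$ is CM, note that $L^\alpha$ is contained in the Galois closure $K'$ of $K$ over $\QQ$, which is itself a Galois CM-field; its complex conjugation is central in $\Gal(K'/\QQ)$, so every subfield of $K'$ is stable under $c$, and in particular $cL^\alpha = L^\alpha$. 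A $c$-stable subfield of a CM-field is either totally real or CM, so it only remains to rule out the totally real case: if $L^\alpha$ were totally real, then $c$ would lie in $\operatorname{Stab}(\alpha_0)$, forcing $c\alpha_0 = \alpha_0$; but the supports of $c\alpha_0$ and $\alpha_0$ are respectively $\ol{\Phi}_K$ and $\Phi_K$, which are disjoint and nonempty, a contradiction.

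For (ii), I would use the critical decomposition to rewrite $\beta = c\alpha + w\ol{\Phi}$, obtained by applying $c$ to the identity $c\beta - \alpha = w\Phi$. To show $\operatorname{Stab}(\alpha) \subset \operatorname{Stab}(\beta)$, take $\tau \in \operatorname{Stab}(\alpha)$. Since $L^\alpha$ is CM by (i), $c$ normalizes $\operatorname{Stab}(\alpha)$, so $\operatorname{Stab}(c\alpha) = c\operatorname{Stab}(\alpha)c^{-1} = \operatorname{Stab}(\alpha)$ and $\tau$ fixes $c\alpha$. Moreover, $\operatorname{Stab}(\alpha) \subset G_E$ by (i), and $G_E$ also equals $\operatorname{Stab}(\ol{\Phi})$ (the reflex field of $\ol{\Phi}_K$ coincides with that of $\Phi_K$ because $E$ is CM and hence $c$-stable). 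Thus $\tau$ fixes $\ol{\Phi}$ as well, and combining gives $\tau\beta = \tau(c\alpha) + w\tau(\ol{\Phi}) = \beta$, whence $L^\beta \subset L^\alpha$. The main subtle point throughout is the CM-ness of $L^\alpha$ in (i), for which the centrality of complex conjugation in a Galois CM-field is the decisive structural ingredient; once this is available, (ii) reduces to a clean manipulation of stabilizers.
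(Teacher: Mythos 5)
Your proof is correct and follows essentially the same route as the paper: reduce to $K$ via $L^\alpha = K^{\alpha_0}$, deduce $E \subset L^\alpha$ from the fact that $\operatorname{Stab}(\alpha_0)$ preserves the support $\Phi_K$, and prove (ii) from $\beta = c\alpha + w\ol{\Phi}$ by checking that $\operatorname{Stab}(\alpha)$ fixes both $c\alpha$ and $\ol{\Phi}$. The only (harmless) variation is that you rule out the totally real case for $L^\alpha$ directly from $c\alpha_0 \neq \alpha_0$, whereas the paper deduces CM-ness from the containment $E \subset L^\alpha$.
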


\begin{proof}
Since $K$ is a CM-field, it is clear that $L^\alpha = K^{\alpha_0}$ is either totally real or a CM-field. Any $\tau \in G_\QQ$ which fixes $\alpha_0 \in I_{\Phi_K}$ also has to fix the CM-type $\Phi_K$ on which $\alpha_0$ is supported. It follows that $E \subset L^\alpha$. In particular, $L^\alpha$ is a CM-field.
For assertion (ii), consider $\tau \in G_\QQ$ such that $\tau \alpha = \alpha$. Since $\alpha$ is induced from a CM-field, it follows that $\tau$ fixes $c \alpha$ as well. Moreover, by (i), $\tau$ fixes the CM-type $\Phi$ and hence also its complex conjugate $\ol{\Phi}$. Finally, $\beta = w \ol{\Phi} + c \alpha$ by the weight condition (\ref{weight_condition}) and thus $\tau \beta = \beta$, which proves the claim.
\end{proof}

Now assume that $T$ is a CM-field containing $L^\alpha$ and that 
\[
\chi: L_f^\times \to T^\times
\]
is an algebraic Hecke character with infinity type $\chi_a$. 

\begin{definition}\label{def_sign_epsilon}
Let $\eta \in J_E$. We define the map
\[
\varepsilon_{\eta \Phi} : G_\QQ \to \{\pm 1\}
\]
as follows: For any $\tau \in G_\QQ$, we let $\varepsilon_{\eta \Phi}(\tau)$ be the sign of the bijection
\[
\langle c \rangle \bs J_L \xto{\sim} \eta \Phi \xto{\tau} \tau \eta \Phi \xto{\sim} \langle c \rangle \bs J_L,
\]
where the first and the last bijection are induced from the inclusions $\Phi \subset J_L$ and $\eta \Phi \subset J_L$.
\end{definition}
For $\tau_1, \tau_2 \in G_\QQ$, one immediately checks that
\begin{align}\label{sign_epsilon_properties}
\varepsilon_{\eta\Phi}(\tau_1 \tau_2) = \varepsilon_{\tau_2 \eta \Phi}(\tau_1) \varepsilon_{\eta \Phi}(\tau_2).
\end{align}
In particular, the restriction $\varepsilon_{\Phi} : G_{E} \to \{ \pm 1\}$ is a character and we may regard it as a finite Hecke character $\varepsilon_{\Phi}: E_f^\times \to T^\times$. 

\begin{definition}\label{def_hecke_character_xi}
Let
$\Xi: E_f^\times \to T^\times$
denote the algebraic Hecke character
\begin{align}\label{def_character_xi}
\Xi = (\chi \circ \Phi^\ast)^{-1}  \lVert \cdot \rVert_E^{-d(\beta)}  \varepsilon_\Phi.
\end{align}
Note that the infinity-type of $\Xi$ is given by $\Xi_a = (\Phi^\ast)^\alpha \cdot (\ol{\Phi}^\ast)^{\beta}$.
\end{definition}
For any $\eta \in J_E$, we have the coset $L(\eta \Phi, \tau)$, defined in (\ref{modified_coset}), associated to the CM-type $\eta \Phi$ of $L$.
For any $\tau \in G_\QQ$, let us define the elements $\xi(\tau,\eta) \in T_f^\times$ by 
\begin{align}\label{def_xi_elements}
\xi(\tau,\eta) = \chi_a(\lambda)^{-1} \chi(\lambda) \chi_\cyc(\tau)^{-d(\beta)} \varepsilon_{\eta \Phi}(\tau),
\end{align}
where $\lambda \in L(\eta\Phi, \tau)$ can be chosen arbitrarily.

The next result asserts the existence of non-trivial elements $a_\eta$ in $M(\Xi)^\eta_B$ for each $\eta \in J_E$ whose images in the finite adelic realizations are related by the Galois action up to the factors $\xi(\tau,\eta)$:

\begin{proposition}[\cite{blasius}, Corollary 4.4.2]\label{existence_betti_system}
Let $M(\Xi)$ be a motive of type $(E,\Xi,T)$. For every $\eta \in J_E$ there exists a $T$-basis $a_\eta \in M(\Xi)^\eta_B$ such that for any $\tau \in G_\QQ$ the diagram
\[
\begin{tikzcd}
T_f \arrow{r}{I_f(a_\eta)} \arrow{d}[swap]{\xi(\tau,\eta)} & M(\Xi)_f^\eta \arrow{d}{\tau} \\
T_f \arrow{r}{I_f(a_{\tau \eta})} & M(\Xi)_f^{\tau \eta}
\end{tikzcd}
\]
is commutative, where the element $\xi(\tau,\eta) \in T_f^\times$ is defined as in (\ref{def_xi_elements}).
\end{proposition}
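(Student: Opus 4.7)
The plan is to build the system $(a_\eta)_{\eta \in J_E}$ from an initial choice $a_{1_E}$ by Galois transport. The scheme rests on two algebraic properties of the scalars $\xi(\tau, \eta)$: a cocycle identity, and compatibility with the action of $G_E$ on $M(\Xi)_f$ dictated by the type $(E, \Xi, T)$. Once these are in place, the $a_\eta$ are defined using lifts of the embeddings $\eta$ to $G_\QQ$, and the general Galois equivariance reduces to the cocycle relation.

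First I would verify that $\xi(\tau, \eta)$ is independent of the choice of $\lambda \in L(\eta \Phi, \tau)$: two choices differ by an element $\ell \in L^\times$, and $\chi|_{L^\times} = \chi_a$ forces $\chi_a(\ell)^{-1}\chi(\ell) = 1$. The cocycle identity $\xi(\tau_1 \tau_2, \eta) = \xi(\tau_1, \tau_2 \eta) \xi(\tau_2, \eta)$ then follows by combining Proposition \ref{properties_of_cosets} (i), the multiplicativity of $\chi_\cyc$ and of $\chi$, and identity (\ref{sign_epsilon_properties}) for $\varepsilon_\Phi$. For $\sigma \in G_E$ and $e \in E_f^\times$ with $r_E(e) = \sigma|_{E^\ab}$, Proposition \ref{properties_of_cosets} (ii) furnishes the distinguished choice $\lambda = \Phi^\ast(e)^{-1}$; unravelling definition (\ref{def_character_xi}) of $\Xi$ then yields $\xi(\sigma, 1_E) = \Xi_a(e)^{-1} \Xi(e)$, which is precisely the scalar by which $\sigma$ acts on $I_f(a)$ for any $T$-basis $a$ of $M(\Xi)_B$, since $M(\Xi)$ is of type $(E, \Xi, T)$.

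With these ingredients at hand, I would fix a $T$-basis $a_{1_E}$ of $M(\Xi)_B$, choose a lift $\tau_\eta \in G_\QQ$ of each $\eta \in J_E$ (with $\tau_{1_E} = 1$), and define $a_\eta$ by the relation $\tau_\eta(I_f(a_{1_E})) = \xi(\tau_\eta, 1_E) \cdot I_f(a_\eta)$. The $G_E$-compatibility of the preceding paragraph together with $T_f$-linearity of the Galois action shows that $a_\eta$ does not depend on the choice of lift $\tau_\eta$. The general formula $\tau(I_f(a_\eta)) = \xi(\tau, \eta) I_f(a_{\tau \eta})$ then follows by taking $\tau \tau_\eta$ as a lift of $\tau \eta$ and applying the cocycle identity to rewrite $\xi(\tau \tau_\eta, 1_E) = \xi(\tau, \eta) \xi(\tau_\eta, 1_E)$.

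The heart of the argument, and the main obstacle, is the rationality claim underpinning the construction of $a_\eta$: the element $\xi(\tau_\eta, 1_E)^{-1} \tau_\eta(I_f(a_{1_E}))$ a priori lies only in the finite adelic realization $M(\Xi)^\eta_f$, and one must show that it is contained in the image of $I_f : M(\Xi)^\eta_B \to M(\Xi)^\eta_f$ and forms a $T$-basis there. This is exactly where the main theorem of complex multiplication (Theorem \ref{main_theorem_of_cm}) intervenes: realizing $M(\Xi)$, after base change to a sufficiently large Galois extension $F/E$, as a direct summand of $h^1(A)^\alpha \otimes h^1(A^\vee)^\beta$ for an abelian variety $A/F$ of CM-type $(L, \Phi)$, and applying Theorem \ref{main_theorem_of_cm} to $h^1(A)$ with a suitable $\lambda \in L(\Phi, \tau_\eta)$, produces an explicit Betti-rational basis with the required transformation behaviour. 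This is essentially the approach pursued in the remainder of the present section.
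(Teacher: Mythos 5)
Your outline is correct, but note that the paper itself offers no proof of this proposition: it is quoted from \cite{blasius}, Corollary 4.4.2, where it is deduced from Blasius' \emph{motivic} main theorem of complex multiplication applied directly to the rank-one motive $M(\Xi)$. What you propose instead --- formal cocycle bookkeeping for the $\xi(\tau,\eta)$, followed by a rationality argument through the presentation $M(\Xi)\cong e_\Xi\cdot R_{F/E}(h^1(A)^\alpha_T\otimes h^1(A^\vee)^\beta_T)$ and the geometric main theorem of CM (Theorem \ref{main_theorem_of_cm}) --- is essentially the content of the paper's own Theorem \ref{a_eta_elements_transformation}, which independently re-establishes the proposition by exhibiting explicit elements $a_\eta$. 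The two packagings trade difficulties: transporting an abstract $T$-basis $a_{1_E}$, as you do, makes the basis property automatic once rationality is known (the Galois action and multiplication by $\xi(\tau_\eta,1_E)^{-1}$ are $T_f$-linear bijections), but it concentrates all the arithmetic in the claim that $\xi(\tau_\eta,1_E)^{-1}\tau_\eta(I_f(a_{1_E}))$ is Betti-rational; the paper's version starts from manifestly rational elements $b(\gamma_\sigma)=e_\Xi\cdot\gamma_\sigma^{\balpha}\otimes(\gamma_\sigma^\vee)^{\bbeta}$ and must instead prove that the explicit combination $\sum_\sigma t_\sigma b(\gamma_\sigma)$ does not die under the projector $e_\Xi$. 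One point in your sketch deserves emphasis: the rationality step is not a formal consequence of Theorem \ref{main_theorem_of_cm} alone. Writing $a_{1_E}=\sum_\sigma c_\sigma b(\gamma_\sigma)$ with $c_\sigma\in T$, the coefficients appearing after transport are $\xi(\tau,1_E)^{-1}\chi_a(\lambda(\tau,\sigma))^{-1}\chi_\cyc(\tau)^{-d(\beta)}$, and showing that these lie in $T^\times$ (rather than merely in $T_f^\times$) is exactly the content of Proposition \ref{t_elements_independence}, i.e.\ $t_\sigma\in T^\times$, which rests on the character identity $\Xi\circ N_{F/E}=\psi^\alpha\ol{\psi}^{\beta}$. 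With that ingredient made explicit, your argument goes through and agrees with the construction carried out in Section \ref{section_6_reflex_motive}.
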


The following theorem allows to recover the Deligne period $c^+(\chi)$ from the reflex motive $M(\Xi)$:

\begin{theorem}[Blasius] \label{blasius_period_relation}
Let $\Xi$ be the Hecke character as defined in (\ref{def_character_xi}), $M(\Xi)$ a motive of type $(E,\Xi,T)$ and $\{a_\eta \mid \eta \in J_E\}$ as in Proposition \ref{existence_betti_system}. Then $F^{d(\alpha)+d(\beta)} RM(\Xi)_\dR$ is a $1$-dimensional $T$-vector space. If $\omega$ is a basis, there exists a unique element $v(\chi) \in (T \otimes \CC)^\times$ satisfying
\[
I_\infty(\omega) = v(\chi) \cdot \sum_{\eta \in J_E} e(\eta) a_\eta,
\]
and one has
\[
(2\pi i)^{-d(\beta)} v(\chi) = c_+(\chi) \mod T^\times.
\]
\end{theorem}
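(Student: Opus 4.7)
The plan is to deduce both assertions from an explicit construction of $M(\Xi)$ as a direct factor inside a suitable symmetric/exterior power built from $M(\chi)$, following the strategy outlined in \cite{blasius}. Concretely, one first observes that the reflex motive should arise, up to a Tate twist by $(2\pi i)^{d(\beta)}$, as a direct summand of $\bigwedge^{n} R_{L/\QQ} M(\chi)^{\vee}$ (or a related symmetric construction), in a way that identifies its Hecke character with $\Xi$ via the main theorem of complex multiplication (this is the role played by \cite{blasius}, Theorem 4.5.2). Having this presentation makes the Betti basis $a_\eta$ and the Hodge-filtration datum of $M(\Xi)$ computable from corresponding data on $M(\chi)$.

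For the first claim, compute the Hodge type of $M(\Xi)$ from its infinity-type $\Xi_a = (\Phi^{\ast})^{\alpha} (\ol{\Phi}^{\ast})^{\beta}$. At each coefficient embedding $\iota \in J_T$ and each place $\eta \in J_E$ the one-dimensional summand $M(\Xi)_\dR \otimes_{T\otimes E, \iota\otimes\eta} \CC$ sits in bidegree $(p_\iota(\eta), q_\iota(\eta))$, where $p_\iota(\eta)$ is read off from the pairing of $\iota\Xi_a$ with $\eta$. Using the defining property $N^{\ast}_{L'/E}(\Phi^{\ast}) = (\Phi')^{-1}$ of the reflex CM-type and the supports of $\alpha,\beta$, one checks that for each $\iota$ there is a unique $\eta = \eta(\iota) \in J_E$ for which $p_\iota(\eta) = d(\alpha)+d(\beta)$, while $p_\iota(\eta') < d(\alpha)+d(\beta)$ otherwise. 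Hence $F^{d(\alpha)+d(\beta)}RM(\Xi)_\dR$ has exactly one $\CC$-line in each $\iota$-component of $RM(\Xi)_\dR \otimes \CC$, i.e.\ it is $T$-free of rank one. Uniqueness of $v(\chi) \in (T\otimes\CC)^{\times}$ in the displayed formula then follows because both $\omega$ and $\sum_{\eta} e(\eta) a_\eta$ are $T\otimes\CC$-bases of the same line after applying $I_\infty$.

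For the period identification, transport everything to the direct-factor presentation of $M(\Xi)$ inside the exterior construction on $M(\chi)$. The bases $a_\eta$ of Proposition \ref{existence_betti_system} are characterized by a Galois transformation law whose factors $\xi(\tau,\eta)$ are dictated by the main theorem of complex multiplication for $M(\chi)$; consequently, when expressed inside $\bigwedge^{n} R_{L/\QQ} M(\chi)^{\vee}$, the $a_\eta$ can be written as explicit wedge/tensor products of a $T$-basis of $RM(\chi)_B$ (and its image under complex conjugation), normalized by the Weil-type pairing that appears in the construction. Dually, the generator $\omega$ of $F^{d(\alpha)+d(\beta)}RM(\Xi)_\dR$ is identified with a wedge built from a basis of $RM(\chi)_\dR/F^0$. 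Inserting these presentations into the comparison isomorphism $I_\infty$ turns the identity $I_\infty(\omega) = v(\chi)\sum_\eta e(\eta) a_\eta$ into the statement that $v(\chi)$ is the determinant, taken over bases of $RM(\chi)_B^{+}$ and $RM(\chi)_\dR/F^0$, of $I_\infty^{-1}$ composed with projection — up to a factor coming from the Weil pairing identifying $M(\chi)^{\vee}$ with $M(\chi^{-1})$, which contributes exactly $(2\pi i)^{d(\beta)}$ from the $\beta$-many Tate-twist factors $\QQ(1)_B = 2\pi i \QQ$. This is the content of $(2\pi i)^{-d(\beta)}v(\chi) \equiv c^{+}(\chi) \pmod{T^{\times}}$.

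The main obstacle is the construction of the direct-factor presentation of $M(\Xi)$ and the verification that, inside it, the abstractly characterized Betti bases $a_\eta$ really do match the concrete tensor expressions one writes down. The Galois transformation law for the $a_\eta$ is encoded in the cosets $L(\eta\Phi,\tau)$ via the sign $\varepsilon_{\eta\Phi}$ and the cyclotomic factor $\chi_{\cyc}(\tau)^{-d(\beta)}$, so matching it requires the full main theorem of CM applied to $M(\chi)$, together with careful bookkeeping of the signs $\varepsilon_{\eta\Phi}$ (which arise from reordering the wedge factors under the Galois action) and of the Tate twists introduced by the Weil pairing. Once the identification is in place, the period computation itself is a determinant calculation on the nose.
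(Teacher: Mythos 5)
The paper does not actually prove this theorem: its ``proof'' consists of a citation to Blasius (\cite{blasius}, Proposition 5.3.4 in the CM case; \cite{blasius97}, Theorem H.3 in general), together with the single remark that the sign of Definition \ref{def_sign_epsilon} must be used in place of the slightly different sign of \cite{blasius}, Theorem 4.5.2. Your proposal instead attempts to sketch the proof of Blasius' result itself. The architecture you describe --- realize $M(\Xi)$, up to a Tate twist, as a direct factor of $\bigwedge^{n} R_{L/\QQ} M(\chi)^{\vee}$ via the main theorem of complex multiplication, read off the Hodge filtration from $\Xi_a = (\Phi^\ast)^\alpha(\ol{\Phi}^\ast)^\beta$ to get the one-dimensionality of $F^{d(\alpha)+d(\beta)}RM(\Xi)_\dR$, and then identify $v(\chi)$ with the determinant of $I_+$ --- is indeed the shape of Blasius' argument, so the route is right in spirit. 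The first part (the Hodge-type computation and the existence and uniqueness of $v(\chi)$) is essentially fine as sketched.

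The period identification, however, has a genuine gap. The elements $a_\eta$ are characterized only by the Galois transformation law $I_f(a_\eta)^\tau = \xi(\tau,\eta)\, I_f(a_{\tau\eta})$ with $\xi(\tau,\eta) = \chi_a(\lambda)^{-1}\chi(\lambda)\chi_\cyc(\tau)^{-d(\beta)}\varepsilon_{\eta\Phi}(\tau)$; to substitute them into $I_\infty(\omega) = v(\chi)\sum_\eta e(\eta)a_\eta$ you must produce concrete representatives inside the exterior-power presentation and \emph{verify} that they transform with exactly this factor, sign $\varepsilon_{\eta\Phi}$ and cyclotomic twist included, before invoking uniqueness up to $T^\times$. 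You name this as ``the main obstacle'' and then do not resolve it --- but it is precisely the content of the theorem, not a routine verification (it is the analogue, in Blasius' presentation, of what this paper must prove in Theorem \ref{a_eta_elements_transformation} for its alternative presentation, and there the non-vanishing and the matching of coefficients take real work). Two further points are asserted rather than derived: why the determinant that appears is computed on the subspace $RM(\chi)_B^{+}$ fixed by $F_\infty$ (this is where criticality and the choice of $\Phi$ enter, and it is not visible in your sketch), and why the discrepancy between $v(\chi)$ and $c^+(\chi)$ is exactly $(2\pi i)^{d(\beta)}$ rather than some other power of $2\pi i$ coming from the Weil pairing. If you intend to use the theorem as the paper does, a citation with the sign caveat suffices; if you intend to reprove it, these steps must actually be carried out.
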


\begin{proof}
This is \cite{blasius}, Proposition 5.3.4 in the case where $L$ is a CM-field. For general $L$, the result is \cite{blasius97}, Theorem H.3.  The proof works just as in the CM-case if one uses the sign from Definition \ref{def_sign_epsilon} instead of the slightly different sign defined in \cite{blasius}, Theorem 4.5.2.
\end{proof}

\subsection{An alternative presentation of the reflex motive}\label{subsection_alt_construction}
Let $F/E$ be a finite Galois extension and $A/F$ an abelian variety of CM-type $(L,\Phi)$ such that the CM-character $\psi: F_f^\times \to L^\times$ of $A$ satisfies
\begin{equation}\label{xiandpsi}
\Xi \circ N_{F/E} = \psi^\alpha \smash{\ol{\psi}}^\beta.
\end{equation}
This is possible since the infinity-type of $\Xi$ is given by $(\Phi^\ast)^\alpha (\ol{\Phi^\ast})^\beta$.
By the constructions and results of Section \ref{section_5_M_alpha}, we have the motives $h^1(A)^\alpha$ and $h^1(A^\vee)^\beta$ of types $(F,\psi^\alpha,L^\alpha)$ and $(F,\smash{\ol{\psi}}^\beta, L^\beta)$, respectively. Let $h^1(A)^\alpha_T$ and $h^1(A^\vee)^\beta_T$ denote the extension of coefficients to $T$. Then
\[
N \coloneqq h^1(A)_T^\alpha \otimes h^1(A^\vee)_T^\beta
\]
is a motive of type $(F,\psi^\alpha \smash{\ol{\psi}}^\beta, T)$. 
Let us now fix a motive $M(\Xi)$ of type $(E,\Xi,T)$. By (\ref{xiandpsi}) there exists an isomorphism 
\[
M(\Xi) \times_E F \cong N
\]
of CM-motives over $F$ with coefficients in $T$ and we fix such a choice. This induces an isomorphism
\[
R_{F/E}(M(\Xi) \times_E F) \cong R_{F/E} N
\]
and by transport of structure, we obtain a $G_{F/E}$-action on $R_{F/E} N$ whose coinvariants (and invariants) identify with $M(\Xi)$. In fact, there exists an idempotent endomorphism $e_\Xi$ of $N$ whose image is isomorphic to $M(\Xi)$, i.e.\
\begin{equation}\label{alt_constr_m_xi}
M(\Xi) \cong e_\Xi \cdot R_{F/E}(h^1(A)^\alpha_T \otimes h^1(A^\vee)^\beta_T).
\end{equation}
However, for our purposes it is actually sufficient that $M(\Xi)$ can be realized as a quotient.

\begin{remark}
By generalizing results of Goldstein-Schappacher, \cite{goldsteinschappacher}, to motives for Hecke characters, some additional information can be acquired: Using a theorem of Casselman it is possible to construct a suitable CM abelian variety $A/F$ satisfying (\ref{xiandpsi}) over an abelian extension $F/E$. The restriction of scalars $R_{F/E} N$ acquires coefficients in the $T$-algebra $\sT \coloneqq \bigoplus_{\sigma \in G_{F/E}} \Hom_T(N,N^\sigma)$, where the multiplication is defined by the rule $(\phi_\sigma \sigma) (\phi_\tau \tau) = (\phi_\tau^\sigma \circ \phi_\sigma) \sigma\tau$ and one can show that $\sT$ is a commutative semisimple $T$-algebra such that $R_{F/E} N$ is of rank $1$ over $\sT$. In particular, there exists a Hecke character $\wtilde{\Xi}: E_f^\times \to \sT^\times$ whose values generate $\sT$ and such that $\wtilde{\Xi} \circ N_{F/E} = \Xi \circ N_{F/E}$. It follows that there exists an idempotent $e_\Xi$ in $\sT$ such that $M(\Xi) = e_\Xi \cdot R_{F/E} N$. For more details we refer the reader to sections 4 and 9.1 of \cite{thesis}.
\end{remark}

\subsection{The Betti realization of $M(\Xi)$}\label{BettiRealizationOfMXi}
For each $\sigma \in J_F$, let $\gamma_\sigma \in h^1_B(A^\sigma)$ be an $L$-basis. Via the canonical $L$-linear pairing
$h^1_B(A^\sigma) \otimes_L h^1_B(A^{\vee,\sigma}) \to L(-1)_B = (2\pi i)^{-1} L$,
we can associate dual basis vectors
$\gamma_\sigma^\vee \in h^1_B(A^{\vee,\sigma})$.
For each $\eta \in J_E$, the Betti realization of $M(\Xi)^\eta$ is given by
\begin{align}\label{betti_realization_M_xi}
M(\Xi)^\eta_B = e_\Xi \cdot \bigoplus_{\sigma \in J_{F,\eta}} h^1_B(A^\sigma)^\alpha_T \otimes h^1_B(A^{\vee,\sigma})^\beta_T.
\end{align}
Recall the construction $\gamma \mapsto \gamma^{\balpha}$ from (\ref{betti_element_in_m_alpha}).
For each $\sigma \in J_F$ and any $L$-basis $\gamma_\sigma \in h^1_B(A^\sigma)$, we consider the $T$-basis
\[
\gamma_\sigma^{\balpha} \otimes (\gamma_\sigma^{\vee})^{\bbeta}
\]
of $h^1_B(A^\sigma)^\alpha_T \otimes h^1_B(A^{\vee,\sigma})^\beta_T$. 

\begin{proposition} \label{betticoefficients1}
For $\sigma \in J_F$ and $\tau \in G_\QQ$, let $\lambda(\tau, \sigma) \in L_f^\times$ denote the unique element satisfying
\[
I_f(\gamma_\sigma)^\tau = \lambda(\tau,\sigma) I_f(\gamma_{\tau \sigma}).
\]
Then one has
\[
I_f(\gamma_\sigma^{\balpha} \otimes (\gamma_\sigma^\vee)^{\bbeta})^\tau = \lambda(\tau,\sigma)^{\alpha-\beta} \cdot \chi_\cyc(\tau)^{-d(\beta)} \cdot I_f(\gamma_{\tau\sigma}^{\balpha} \otimes (\gamma_{\tau\sigma}^\vee)^{\bbeta}).
\]
\end{proposition}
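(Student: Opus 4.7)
My plan is to handle the two tensor factors separately, establishing
\begin{equation*}
\tau\, I_f(\gamma_\sigma^{\balpha}) = \lambda(\tau,\sigma)^{\alpha} \cdot I_f(\gamma_{\tau\sigma}^{\balpha}) \quad\text{and}\quad \tau\, I_f((\gamma_\sigma^\vee)^{\bbeta}) = \mu(\tau,\sigma)^{\beta} \cdot I_f((\gamma_{\tau\sigma}^\vee)^{\bbeta}),
\end{equation*}
where $\mu(\tau,\sigma) \in L_f^\times$ is defined by $\tau I_f(\gamma_\sigma^\vee) = \mu(\tau,\sigma) I_f(\gamma_{\tau\sigma}^\vee)$ as the analogue of $\lambda(\tau,\sigma)$ for the dual basis. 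Since $I_f$ is compatible with tensor products and the Galois action is diagonal on a tensor product, the two scalars can then be pulled out as elements of $L^\alpha_f$ and $L^\beta_f$ respectively --- both of which lie in $T_f$ --- and combined.

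For the first of these identities, I would decompose after tensoring with $\ol{\QQ}_f$. By Section \ref{section_alg_T_modules} one has a splitting $h^1_f(A^\sigma)^\alpha \otimes \ol{\QQ}_f = \bigoplus_{\iota \in J_{L^\alpha}} \Sym^{\iota\alpha}(h^1_f(A^\sigma) \otimes \ol{\QQ}_f)$ with $\iota$-component of $I_f(\gamma_\sigma^{\balpha})$ equal to $I_f(\gamma_\sigma^{\iota\alpha}) = \bigotimes_{\sigma' \in J_L} (I_f(\gamma_\sigma) \otimes_{\sigma'} 1)^{\iota\alpha(\sigma')}$, by $L_f$-linearity of $I_f$. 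The operator $\tau$ acts on $h^1_f(A^\sigma) \otimes_{L_f,\sigma'} \ol{\QQ}_f$ by $x \otimes_{\sigma'} y \mapsto \tau(x) \otimes_{\tau\sigma'} \tau(y)$, so substituting $\tau I_f(\gamma_\sigma) = \lambda(\tau,\sigma) I_f(\gamma_{\tau\sigma})$ and pulling the scalar across the tensor produces the factor $(\tau\sigma')(\lambda(\tau,\sigma))$. Reindexing via $\rho = \tau\sigma'$ (so $\iota\alpha(\sigma') = (\tau\iota\alpha)(\rho)$) collects these scalars into $\lambda(\tau,\sigma)^{\tau\iota\alpha}$ and transforms the tensor into $I_f(\gamma_{\tau\sigma}^{\tau\iota\alpha})$. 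Reassembling over $\iota \in J_{L^\alpha}$, and using that $\tau$ acts on $L^\alpha \otimes \ol{\QQ}_f$ by cycling its components $\iota \mapsto \tau\iota$, one recognizes the result as the $\iota$-component of $\lambda(\tau,\sigma)^\alpha \cdot I_f(\gamma_{\tau\sigma}^{\balpha})$. The identical argument, with $\gamma_\sigma^\vee$, $\mu(\tau,\sigma)$ and $\beta$ in place of $\gamma_\sigma$, $\lambda(\tau,\sigma)$ and $\alpha$, gives the dual formula.

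It then remains to identify $\mu(\tau,\sigma)$, and this is where the Weil pairing from Section \ref{section_weil_pairing} enters. Since $\gamma_\sigma^\vee$ is dual to $\gamma_\sigma$, one has $\langle \gamma_\sigma, \gamma_\sigma^\vee\rangle = (2\pi i)^{-1}$, so $\langle I_f(\gamma_\sigma), I_f(\gamma_\sigma^\vee)\rangle_{A^\sigma} = I_f((2\pi i)^{-1})$ in $L_f(-1)$. Applying $\tau$ and using the Galois compatibility of the pairing, the left-hand side becomes $\lambda(\tau,\sigma)\mu(\tau,\sigma) \cdot I_f((2\pi i)^{-1})$, while the right-hand side transforms by $\chi_\cyc(\tau)^{-1}$ (as $\tau$ acts on $\QQ(-1)_f$ by that scalar). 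Hence $\mu(\tau,\sigma) = \lambda(\tau,\sigma)^{-1} \chi_\cyc(\tau)^{-1}$, and so $\lambda^\alpha\mu^\beta = \lambda^{\alpha-\beta} \chi_\cyc(\tau)^{-\beta} = \lambda^{\alpha-\beta}\chi_\cyc(\tau)^{-d(\beta)}$, the last equality because on rational scalars the homomorphism $\beta$ acts as $c \mapsto c^{d(\beta)}$. The main technical obstacle is the bookkeeping in the middle paragraph, specifically verifying that the per-component scalar $\lambda(\tau,\sigma)^{\tau\iota\alpha} \in \ol{\QQ}_f$ does assemble into the global element $\lambda(\tau,\sigma)^\alpha \in L^\alpha_f$ via the canonical decomposition of $L^\alpha \otimes \ol{\QQ}_f$; once this identification is in place, the rest follows by functoriality and the Weil pairing computation.
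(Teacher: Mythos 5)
Your proof is correct and follows the same route as the paper: reduce to the transformation laws of the two tensor factors (yielding $\lambda(\tau,\sigma)^{\alpha}$ and $\mu(\tau,\sigma)^{\beta}$, the latter being the paper's $\lambda^{\vee}(\tau,\sigma)^{\beta}$), then compute $\mu(\tau,\sigma)=\lambda(\tau,\sigma)^{-1}\chi_\cyc(\tau)^{-1}$ from the Galois compatibility of the Weil pairing together with the $\sigma$-independence of $\langle\gamma_\sigma,\gamma_\sigma^\vee\rangle=(2\pi i)^{-1}$. The only difference is that you spell out the eigenspace bookkeeping behind the first step, which the paper dismisses as immediate from the construction of $I_f$ on $h^1(A^\sigma)^\alpha$; your semilinear reindexing there lands on the correct scalar $\lambda(\tau,\sigma)^{\alpha}$.
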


\begin{proof}
Let $\lambda^\vee(\tau,\sigma) \in L_f^\times$ denote the coefficients obtained from the system of dual basis elements $\{\gamma_\sigma^\vee \mid \sigma \in J_F\}$. By construction of the comparison isomorphism $I_f$ for the motives $h^1(A^\sigma)^\alpha$ and $h^1(A^{\sigma,\vee})^\beta$, it immediately follows that 
\[
I_f(\gamma_\sigma^{\balpha} \otimes (\gamma_\sigma^\vee)^{\bbeta})^\tau = \lambda(\tau,\sigma)^\alpha \cdot \lambda^\vee(\tau,\sigma)^\beta \cdot I_f(\gamma_{\tau\sigma}^{\balpha} \otimes (\gamma_{\tau\sigma}^\vee)^{\bbeta})
\]
and it suffices to show $\lambda^\vee(\tau, \sigma) = \lambda(\tau, \sigma)^{-1} \chi_\cyc(\tau)^{-1}$.
Recall from Section \ref{section_weil_pairing} that the Weil pairings
\[
\langle \cdot, \cdot \rangle_\sigma : h^1_f(A^\sigma) \times h^1_f(A^{\vee,\sigma}) \to L_f(-1)
\]
are compatible with the action of $G_\QQ$, i.e.\ for any $\tau \in G_\QQ$, $a \in h^1_f(A^\sigma)$ and $b \in h^1_f(A^{\vee,\sigma})$, one has $\langle a, b \rangle_\sigma^\tau = \langle a^\tau, b^\tau \rangle_{\tau \sigma}$, where we use the full $G_\QQ$-action on $L_f(-1)$. It follows that 
\[
\langle \gamma_{\sigma,f}, \gamma_{\sigma,f}^\vee \rangle_\sigma^\tau 
= \lambda(\tau, \sigma) \lambda^\vee(\tau,\sigma) \cdot  \langle \gamma_{\tau \sigma,f}, \gamma_{\tau \sigma,f}^\vee \rangle_{\tau \sigma} 
\]
and on the other hand we have
\[
\langle \gamma_{\sigma,f}, \gamma_{\sigma,f}^\vee \rangle_\sigma^\tau = \chi_\cyc(\tau)^{-1} \cdot  \langle \gamma_{\sigma,f} , \gamma_{\sigma,f}^\vee \rangle_\sigma.
\]
Since the Weil pairing is of motivic origin and by definition of the dual bases $\gamma_\sigma^\vee$ in terms of the pairing in the Betti realization, the $L_f$-basis $\langle \gamma_{\sigma,f} , \gamma_{\sigma,f}^\vee \rangle_\sigma$ of $L_f(-1)$ is independent of $\sigma$.
Indeed, if $\langle \cdot, \cdot \rangle_B$ denotes the pairing in the Betti realization, we have $\langle \gamma_{\sigma,f}, \gamma_{\sigma,f}^\vee \rangle_\sigma = I_f(\langle \gamma_\sigma, \gamma_\sigma^\vee \rangle_B)$ for all $\sigma \in J_F$ and by construction, $\langle \gamma_\sigma, \gamma_\sigma^\vee \rangle_B = (2\pi i)^{-1} \in L(-1)_B$ is independent of the embedding $\sigma$.
Thus
\[
\lambda^\vee(\tau, \sigma) = \lambda(\tau, \sigma)^{-1} \chi_\cyc(\tau)^{-1},
\]
as desired.
\end{proof}

\subsection{Basis elements in $M(\Xi)^\eta_B$}
As before, let $F/E$ be a finite Galois extension and $A/F$ an abelian variety of CM-type $(L,\Phi)$ such that its CM-character $\psi$ satisfies $\Xi \circ N_{F/E} = \psi^\alpha \smash{\ol{\psi}}^\beta$. Recall that we can regard $M(\Xi)$ as a quotient of $R_{F/E}(h^1(A)^\alpha_T \otimes h^1(A^\vee)^\beta_T)$ as in (\ref{alt_constr_m_xi}). Fix a system $\{\gamma_\sigma \mid \sigma \in J_F\}$ of $L$-bases $\gamma_\sigma \in h^1_B(A^\sigma)$ and, for every $\tau \in G_\QQ$ and $\sigma \in J_F$, let 
$\lambda(\tau,\sigma) \in L_f^\times$
denote the unique elements such that
\[
I_f(\gamma_\sigma)^\tau = \lambda(\tau,\sigma) \cdot I_f(\gamma_{\tau\sigma}).
\]
Recall the definition of the elements $\xi(\tau,\eta)$ in (\ref{def_xi_elements}).
The aim of this section is to use (\ref{alt_constr_m_xi}) to explicitly construct a system of basis elements $a_\eta \in M(\Xi)^\eta_B$ for $\eta \in J_E$ satisfying
\begin{align}\label{betti_coefficients_for_a_eta}
I_f(a_\eta)^\tau = \xi(\tau,\eta) \cdot I_f(a_{\tau \eta}).
\end{align}
Essentially, we define the elements $a_\eta$ to satisfy (\ref{betti_coefficients_for_a_eta}) by design. The more involved part will then be to show that these elements are in fact non-zero.

\begin{proposition}\label{t_elements_independence}
Let $\sigma \in G_\QQ$. The element
\[
t_\sigma \coloneqq \chi(\lambda(\sigma, 1_F))^{-1} \varepsilon_\Phi(\sigma) \in T^\times
\]
only depends on the restriction $\sigma|_F$.
\end{proposition}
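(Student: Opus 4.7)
The strategy is to compare $t_{\sigma_1}$ and $t_{\sigma_2}$ for two lifts $\sigma_1, \sigma_2 \in G_\QQ$ of the same embedding of $F$. Write $\sigma_2 = \sigma_1 \rho$ with $\rho \in G_F$; the plan is to reduce the claim to verifying the single identity
\[
\chi(\lambda(\rho, 1_F)) = \varepsilon_\Phi(\rho) \qquad \text{for all } \rho \in G_F.
\]
To carry out the reduction, first establish the cocycle relation $\lambda(\sigma_1\rho, 1_F) = \lambda(\sigma_1, 1_F) \cdot \lambda(\rho, 1_F)$ by iterating the defining relation $I_f(\gamma_\sigma)^\tau = \lambda(\tau,\sigma) I_f(\gamma_{\tau\sigma})$ and using $\rho|_F = 1_F$. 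The essential input is that the Galois action on $h^1_f(A)$ is $L_f$-linear, which is built into the main theorem of complex multiplication (Theorem \ref{main_theorem_of_cm}). Next, (\ref{sign_epsilon_properties}) applied with $\eta = 1_E$, together with the fact that $\rho \in G_E$ fixes $1_E$, yields $\varepsilon_\Phi(\sigma_1\rho) = \varepsilon_\Phi(\sigma_1)\,\varepsilon_\Phi(\rho)$. Substituting both cocycles into the definition of $t_\sigma$ gives $t_{\sigma_2} = t_{\sigma_1} \cdot \chi(\lambda(\rho,1_F))^{-1}\,\varepsilon_\Phi(\rho)$, so the boxed identity suffices.

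To prove that identity, the plan is to exhibit two complementary descriptions of $\lambda(\rho, 1_F)$. By the main theorem of complex multiplication, $\lambda(\rho, 1_F) \in L(\Phi, \rho)$, and Proposition \ref{properties_of_cosets}(ii) gives $L(\Phi, \rho) = L^\times \cdot \Phi^\ast(e)^{-1}$ for any $e \in E_f^\times$ with $r_E(e) = \rho|_{E^\ab}$. Choose $e = N_{F/E}(e_F)$ for some $e_F \in F_f^\times$ with $r_F(e_F) = \rho|_{F^\ab}$ and write $\lambda(\rho, 1_F) = \ell \cdot \Phi^\ast(e)^{-1}$ with $\ell \in L^\times$. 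On the other hand, since $h^1(A)$ is of type $(F, \psi, L)$, the type condition gives $\lambda(\rho, 1_F) = \psi_a(e_F)^{-1} \psi(e_F)$. Since the infinity-type of $\psi$ satisfies $\psi_a = N_{F/E}^\ast(\Phi^\ast)$, i.e.\ $\psi_a(e_F) = \Phi^\ast(N_{F/E}(e_F)) = \Phi^\ast(e)$, comparing the two descriptions identifies $\ell = \psi(e_F) \in L^\times$.

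Finally, compute $\chi(\lambda(\rho, 1_F)) = \chi_a(\ell) \cdot (\chi \circ \Phi^\ast)^{-1}(e)$. Inverting (\ref{def_character_xi}) gives $(\chi \circ \Phi^\ast)^{-1}(e) = \Xi(e) \cdot \lVert e \rVert_E^{d(\beta)} \cdot \varepsilon_\Phi(e)^{-1}$, and (\ref{xiandpsi}) gives $\Xi(e) = \Xi(N_{F/E}(e_F)) = \psi(e_F)^\alpha \bar\psi(e_F)^\beta$. Using $\chi_a = \beta - \alpha$, $\lVert e \rVert_E = \lVert e_F \rVert_F$, $\varepsilon_\Phi(e) = \varepsilon_\Phi(\rho)$, and the weight-$1$ identity $\psi\bar\psi = \lVert \cdot \rVert_F^{-1}$ (together with $q^\beta = q^{d(\beta)}$ for $q \in \QQ^\times$), the product collapses:
\[
\chi(\lambda(\rho,1_F)) \cdot \varepsilon_\Phi(\rho) \;=\; \psi(e_F)^{\beta-\alpha} \cdot \psi(e_F)^\alpha \bar\psi(e_F)^\beta \cdot \lVert e_F \rVert_F^{d(\beta)} \;=\; (\psi \bar\psi)(e_F)^\beta \cdot \lVert e_F \rVert_F^{d(\beta)} \;=\; 1,
\]
so $\chi(\lambda(\rho, 1_F)) = \varepsilon_\Phi(\rho)^{-1} = \varepsilon_\Phi(\rho)$ (the latter since $\varepsilon_\Phi$ is $\pm 1$-valued), and thus $t_{\sigma_2} = t_{\sigma_1}$. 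The main obstacle is the juxtaposition of the two descriptions of $\lambda(\rho, 1_F)$: MTCM supplies the reflex-norm description suited to matching $\chi$ through the definition of $\Xi$, whereas the type description via $\psi$ — combined with $\psi_a = N_{F/E}^\ast(\Phi^\ast)$ — pins down the algebraic factor $\ell = \psi(e_F)$ needed for the final cancellation.
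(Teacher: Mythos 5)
Your proof is correct and follows essentially the same route as the paper: reduce to showing $t_\rho=1$ for $\rho\in G_F$ via the two cocycle relations, then compute $\lambda(\rho,1_F)=\psi(e_F)\,\Phi^\ast(e)^{-1}$ and cancel using the definition of $\Xi$, the identity $\Xi\circ N_{F/E}=\psi^\alpha\ol{\psi}^\beta$ and the weight-$1$ relation $\psi\ol{\psi}=\lVert\cdot\rVert_F^{-1}$. The only (harmless) difference is your detour through the main theorem of complex multiplication and the coset $L(\Phi,\rho)$ to pin down $\ell$, which is redundant since the type condition for $(F,\psi,L)$ already yields the formula for $\lambda(\rho,1_F)$ directly, as in the paper.
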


\begin{proof}
Two element $\sigma, \sigma' \in G_\QQ$ with the same restriction to $F$ satisfy $\sigma' = \sigma \tau$ for some $\tau \in G_F$. We have
$\lambda(\sigma \tau, 1_F) = \lambda(\sigma, 1_F) \lambda(\tau, 1_F)$
and hence
$t_{\sigma\tau} = t_\sigma t_\tau$.
It therefore suffices to show $t_\tau = 1$. Since $h^1(A)$ is of type $(F,\psi,L)$ and $\psi_a = \Phi^\ast \circ N_{F/E}$, one has
\[
\lambda(\tau,1_F) = \psi(x) \cdot \Phi^\ast(N_{F/E}(x))^{-1} ,
\]
for any $x \in F_f^\times$ with $r_F(x) = \tau|_{F^\ab}$. Hence, one obtains
\begin{align*}
t_\tau &= \chi(\lambda(\tau,1_F))^{-1} \varepsilon_\Phi(\tau) \\
&= \chi_a(\psi(x))^{-1} \chi(\Phi^\ast(N_{F/E}(x))) \varepsilon_\Phi(\tau) \\
&= \psi(x)^{\alpha-\beta} \cdot  \Xi(N_{F/E}(x))^{-1} \cdot \lVert N_{F/E}(x)\rVert_E^{-d(\beta)} \cdot \varepsilon_\Phi(N_{F/\QQ}(x)) \cdot \varepsilon_\Phi(\tau) \\
&= \psi(x)^{-\beta} \ol{\psi}(x)^{-\beta} \cdot \lVert x \rVert_F^{-d(\beta)} \\
&= 1,
\end{align*}
which proves the claim.
\end{proof}

Let us introduce the following notation: Recall (\ref{betti_realization_M_xi}). For each $\sigma \in J_F$, consider the element
\[
b(\gamma_\sigma) \coloneqq e_\Xi \cdot \gamma_\sigma^{\balpha} \otimes (\gamma_\sigma^\vee)^{\bbeta}  
\]
in $M(\Xi)^\sigma_B$.
Proposition \ref{betticoefficients1} implies that
\begin{align}\label{b_element_coefficients}
I_f(b(\gamma_\sigma))^\tau = \lambda(\tau,\sigma)^{\alpha-\beta} \chi_\cyc(\tau)^{-d(\beta)} I_f(b(\gamma_{\tau\sigma}))
\end{align}
for all $\sigma \in J_F$ and $\tau \in G_\QQ$. 

\begin{definition}\label{def_a_eta_elements}
For each $\eta \in J_E$, we define the element
\[
a_\eta \coloneqq  \sum_{\sigma \in J_{F,\eta}} t_\sigma \cdot b(\gamma_\sigma)
\]
in $M(\Xi)^\eta_B$.
\end{definition}

We can now prove the main result of this section:

\begin{theorem}\label{a_eta_elements_transformation}
For every $\eta \in J_E$, the element $a_\eta \in M(\Xi)_B^\eta$ is non-zero. 
For all $\tau \in G_\QQ$ and $\eta \in J_E$, one has
\begin{equation}\label{transformation_property}
I_f(a_\eta)^\tau = \xi(\tau,\eta) \cdot I_f(a_{\tau\eta}).
\end{equation}
\end{theorem}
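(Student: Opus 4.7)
The plan is to establish the transformation law by a direct computation on the finite adelic realization, and then deduce non-vanishing from it together with Blasius' Proposition \ref{existence_betti_system}.

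For the transformation law, I would unwind the definition of $a_\eta$, apply Proposition \ref{betticoefficients1} (via its reformulation (\ref{b_element_coefficients})) to each term $I_f(b(\gamma_\sigma))^\tau$, and reindex the sum by $\sigma' = \tau\sigma$, so that $\sigma'$ runs over $J_{F,\tau\eta}$. Matching this term by term against $\xi(\tau,\eta)I_f(a_{\tau\eta})$, the assertion reduces to verifying the scalar identity
\[
t_{\tau^{-1}\sigma'} \cdot \lambda(\tau,\tau^{-1}\sigma')^{\alpha-\beta} = \chi_a(\lambda)^{-1}\chi(\lambda)\cdot\varepsilon_{\eta\Phi}(\tau)\cdot t_{\sigma'}
\]
for every $\sigma'\in J_{F,\tau\eta}$ and any $\lambda\in L(\eta\Phi,\tau)$. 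Substituting the definition of $t_\sigma$ from Proposition \ref{t_elements_independence}, applying the cocycle identity $\lambda(\sigma',1_F) = \lambda(\tau,\tau^{-1}\sigma')\lambda(\tau^{-1}\sigma',1_F)$, invoking (\ref{sign_epsilon_properties}) for $\varepsilon_\Phi$, and using that $(\tau^{-1}\sigma')\Phi = \eta\Phi$ (since $\Phi$ is fixed by $G_E$ and $(\tau^{-1}\sigma')|_E = \eta$), the identity collapses to $(\chi_a^{-1}\chi)(\lambda(\tau,\tau^{-1}\sigma')) = (\chi_a^{-1}\chi)(\lambda)$. The character $\chi_a^{-1}\chi$ on $L_f^\times$ factors through the Artin map $r_L$ since $M(\chi)$ is of type $(L,\chi,T)$, and by the Main Theorem of Complex Multiplication (Theorem \ref{main_theorem_of_cm}) both $\lambda(\tau,\tau^{-1}\sigma') \in L((\tau^{-1}\sigma')\Phi,\tau) = L(\eta\Phi,\tau)$ and $\lambda$ itself have Artin image $\tr_{L,\eta\Phi}(\tau)$, so the two values agree.

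For the non-vanishing, I would invoke Proposition \ref{existence_betti_system} to fix a nonzero system $\{\tilde a_\eta\}_\eta$ of $T$-bases of $M(\Xi)^\eta_B$ satisfying the same transformation law. Writing $a_\eta = r_\eta \tilde a_\eta$ with $r_\eta\in T$ and comparing the two transformation laws forces $\tau(r_\eta) = r_{\tau\eta}$. Since $G_\QQ$ acts transitively on $J_E$, the $r_\eta$ are Galois conjugate and either all vanish or none does. It thus suffices to exhibit a single nonzero $a_\eta$, for which I would take $\eta = 1_E$.

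The hard part will be verifying that $a_{1_E} \neq 0$. By construction, $a_{1_E} = e_\Xi(v_{1_E})$ where $v_{1_E} = \sum_{\sigma \in J_{F/E}} t_\sigma \gamma_\sigma^{\balpha}\otimes(\gamma_\sigma^\vee)^{\bbeta}$ lives in $(R_{F/E}N)^{1_E}_B = \bigoplus_{\sigma} N^\sigma_B$, and $M(\Xi)^{1_E}_B$ sits inside as the image of $e_\Xi$, i.e.\ the $G_{F/E}$-invariants for the transport-of-structure action furnished by the fixed isomorphism $M(\Xi)\times_E F \cong N$. This isomorphism yields, for each $\sigma \in J_{F/E}$, a $T$-linear identification $\phi_\sigma: M(\Xi)^{1_E}_B \xto{\sim} N^\sigma_B$, and the invariant subspace is precisely the set of tuples $(x_\sigma)_\sigma$ for which $\phi_\sigma^{-1}(x_\sigma)$ is $\sigma$-independent. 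The plan is to compute $\phi_\sigma^{-1}(\gamma_\sigma^{\balpha}\otimes(\gamma_\sigma^\vee)^{\bbeta})$ explicitly, exploiting the Galois equivariance of $\phi$ via the finite adelic comparison together with the Main Theorem of Complex Multiplication, and to show that the resulting factor is precisely $t_\sigma^{-1}$ up to a global unit in $T^\times$ — this is exactly the identity encoded by Proposition \ref{t_elements_independence}. Consequently $t_\sigma \phi_\sigma^{-1}(\gamma_\sigma^{\balpha}\otimes(\gamma_\sigma^\vee)^{\bbeta})$ is $\sigma$-independent, so $v_{1_E}$ already lies in $M(\Xi)^{1_E}_B$, giving $a_{1_E} = v_{1_E}$, which is manifestly nonzero since each of its components is a $T$-basis of the corresponding direct summand.
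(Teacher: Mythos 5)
Your argument for the transformation law (\ref{transformation_property}) is correct and essentially identical to the paper's: both reduce, via (\ref{b_element_coefficients}) and the cocycle relations for $\lambda(\cdot,\cdot)$ and $\varepsilon$, to the fact that $\chi\chi_a^{-1}$ takes the same value on any two elements of the single $L^\times$-coset $L(\eta\Phi,\tau)$. Your reduction of the non-vanishing to the single case $\eta=1_E$ also works, though the paper gets it directly from the transformation law just proved (if $a_{1_E}\neq 0$ then $I_f(a_{1_E})^\eta=\xi(\eta,1_E)I_f(a_\eta)\neq 0$), without needing Proposition \ref{existence_betti_system}.

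The gap is in the decisive step, the proof that $a_{1_E}\neq 0$. You correctly identify that one must show the tuple $\bigl(t_\sigma\,\gamma_\sigma^{\balpha}\otimes(\gamma_\sigma^\vee)^{\bbeta}\bigr)_{\sigma\in J_{F/E}}$ is fixed by the transport-of-structure $G_{F/E}$-action, i.e.\ that $\phi_\sigma^{-1}(\gamma_\sigma^{\balpha}\otimes(\gamma_\sigma^\vee)^{\bbeta})$ equals $t_\sigma^{-1}$ times a $\sigma$-independent vector. But this identity is \emph{not} the one encoded by Proposition \ref{t_elements_independence}: that proposition only treats $\sigma\in G_F$, where it shows $t_\sigma=1$ using $\Xi\circ N_{F/E}=\psi^\alpha\ol{\psi}^\beta$, and it says nothing about how $b(\gamma_\sigma)$ compares with $b(\gamma_1)$ inside $M(\Xi)_B$ when $\sigma\in G_E$ moves $F$. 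The actual computation needs three further inputs: (a) the Shimura--Taniyama reciprocity law (Proposition \ref{properties_of_cosets}(ii) together with Theorem \ref{main_theorem_of_cm}), which --- after normalizing the bases $\gamma_\sigma$, as one may since changing them only rescales $a_{1_E}$ by a global factor $u_{1_F}^{\alpha-\beta}$ --- lets one arrange $\lambda_\sigma=\Phi^\ast(e_\sigma)^{-1}$ with $r_E(e_\sigma)=\sigma|_{E^{\ab}}$; (b) the fact that $M(\Xi)$ is of type $(E,\Xi,T)$, giving $I_f(b(\gamma_1))^\sigma=\Xi_a(e_\sigma)^{-1}\Xi(e_\sigma)\,I_f(b(\gamma_1))$; and (c) the explicit definition (\ref{def_character_xi}) of $\Xi$, including the sign $\varepsilon_\Phi$ and the factor $\lVert\cdot\rVert_E^{-d(\beta)}$, which is exactly what makes the product $t_\sigma\lVert e_\sigma\rVert_E^{d(\beta)}\Xi(e_\sigma)$ collapse to $1$. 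Until this computation is carried out the non-vanishing remains unproved; once it is, your argument coincides with the paper's.
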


\begin{proof}
Let us first show that the elements satisfy (\ref{transformation_property}). By (\ref{b_element_coefficients}), we have
\[
I_f(a_\eta)^\tau 
= \sum_{\sigma \in J_{F,\eta}} t_\sigma \cdot \lambda(\tau,\sigma)^{\alpha-\beta} \chi_\cyc(\tau)^{-d(\beta)} I_f(b(\gamma_{\tau \sigma}))
\]
and it suffices to show that the elements $t_\sigma$ satisfy
\[
t_\sigma \cdot \chi_a(\lambda(\tau,\sigma))^{-1} \chi_\cyc(\tau)^{-d(\beta} =
t_{\tau \sigma} \cdot \xi(\tau,\eta)
\]
for all $\sigma \in J_{F,\eta}$ and $\tau \in G_\QQ$. Choosing an extension $\sigma \in G_\QQ$ for every $\sigma \in J_F$, this follows from the definitions and using
$\lambda(\tau \sigma,1_F) = \lambda(\sigma,1_F) \lambda(\tau,\sigma)$ and
$\varepsilon_\Phi(\sigma) \varepsilon_{\eta \Phi}(\tau) = \varepsilon_\Phi(\tau\sigma)$.

We now prove that for any $\eta \in J_E$, the element
$a_\eta$
in $M(\Xi)^\eta_B$ is in fact non-zero.
For every $\sigma \in J_F$, we choose an extension $\sigma \in G_\QQ$ and abbreviate $\lambda_\sigma \coloneqq \lambda(\sigma,1_F) \in L_f^\times$. Recall that
$t_\sigma = \chi(\lambda_\sigma)^{-1} \varepsilon_\Phi(\sigma)$. First, we note that the claim $a_\eta \neq 0$ is independent of the choice of the basis elements $\{\gamma_\sigma\}_{\sigma \in J_{F,\eta}}$ used in the construction of $a_\eta$. Indeed, for a different choice $\{\gamma_\sigma'\}_{\sigma \in J_F}$ of the form
\[
\gamma_\sigma' = u_\sigma \cdot \gamma_\sigma 
\]
with $u_\sigma \in L^\times$, one easily shows that the corresponding elements $a_\eta'$ are given by $a_\eta' = u_{1_F}^{\alpha-\beta} \cdot a_\eta$.
Moreover, we observe that it suffices to show 
$a_{1_E}  \neq 0$,
since for arbitrary $\eta \in J_E$ and any extension $\eta \in G_\QQ$ of $\eta \in J_E$, one has
\[
I_f(a_{1_E})^\eta = \xi(\eta,1_E) \cdot I_f(a_{\eta})
\]
by design.
Hence, we may choose the system $\{\gamma_\sigma\}_\sigma$ in the following way: For every $\sigma \in J_{F/E}$ fix an extension $\sigma \in G_E$ and $e_\sigma \in E_f^\times$ such that
$r_E(e_\sigma) = \sigma|_{E^\ab}$.
Assume that $\id$ extends $1_E$ and that $e_{1_E} = 1$.
Recall the $L^\times$-coset $L(\Phi,\sigma)$ defined in (\ref{modified_coset}).
Since every extension $\sigma$ fixes the reflex field, we have
\[
L(\Phi,\sigma) = L^\times \cdot \Phi^\ast(e_\sigma)^{-1}
\]
by Proposition \ref{properties_of_cosets} (ii). Fix an arbitrary $L$-basis $\gamma_1 \in h^1(A)_B$. For every $\sigma \in J_{F/E}$, the main theorem of complex multiplication (cf.\ Theorem \ref{main_theorem_of_cm}) implies the existence of a basis vector $\gamma_\sigma \in h^1(A)^\sigma_B$ satisfying
\begin{align}\label{equation0}
\lambda_\sigma = \Phi^\ast(e_\sigma)^{-1}.
\end{align}
On the other hand, the motive $M(\Xi)$ is of type $(E,\Xi,T)$ and hence
\begin{align}\label{equation2}
I_f(b(\gamma_1))^\sigma = \Xi_a(e_\sigma)^{-1} \cdot \Xi(e_\sigma) \cdot I_f(b(\gamma_1)).
\end{align}
Recall that $\Xi_a = (\Phi^\ast)^{\alpha + c \beta}$. Using (\ref{b_element_coefficients}), (\ref{equation0}) and (\ref{equation2}), we obtain
\begin{align*}
I_f(b(\gamma_\sigma)) &= \lambda_\sigma^{\alpha - \beta} \chi_\cyc(\sigma)^{d(\beta)} \Xi_a(e_\sigma)^{-1} \Xi(e_\sigma) \cdot I_f(b(\gamma_1)) \\
&= \Phi^\ast(e_\sigma)^{\alpha - \beta} \Phi^\ast(e_\sigma)^{-\alpha - c \beta} \chi_\cyc(\sigma)^{d(\beta)} \Xi(e_\sigma) \cdot I_f(b(\gamma_1)) \\
&= N_{E/\QQ}(e_\sigma)^{-d(\beta)} \chi_\cyc(\sigma)^{d(\beta)} \Xi(e_\sigma) \cdot I_f(b(\gamma_1)) \\
&= \lVert e_\sigma \rVert_E^{d(\beta)} \cdot \Xi(e_\sigma) \cdot I_f(b(\gamma_1)),
\end{align*}
and hence
\begin{align}\label{equation3}
b(\gamma_\sigma) = \lVert e_\sigma \rVert_E^{d(\beta)} \cdot \Xi(e_\sigma) \cdot b(\gamma_1)
\end{align}
in $M(\Xi)_B$. Therefore, again by (\ref{equation0}) and by definition of the Hecke character $\Xi$ (cf.\ Definition \ref{def_hecke_character_xi}), we obtain
\begin{align*}
a_1 &= \sum_{\sigma \in J_{F/E}} t_\sigma \cdot b(\gamma_\sigma) \\
&= \sum_{\sigma \in J_{F/E}} \chi(\Phi^\ast(e_\sigma)) \varepsilon_\Phi(e_\sigma) \lVert e_\sigma \rVert_E^{d(\beta)} \cdot \Xi(e_\sigma) \cdot b(\gamma_1) \\
&= [F:E] \cdot b(\gamma_1).
\end{align*}
Finally, observe that 
\[
b(\gamma_1) \neq 0
\]
in $M(\Xi)$ as otherwise one has $b(\gamma_\sigma) = 0$ for all $\sigma \in J_{F/E}$ by (\ref{equation3}). But the elements $b(\gamma_\sigma)$ for $\sigma \in J_{F/E}$ generate $M(\Xi)_B$ as a $T$-vector space and hence one would have $M(\Xi)_B = 0$, in contradiction to $\dim_T M(\Xi)_B = 1$.
\end{proof}

The theorem provides an explicit presentation, in terms of the CM abelian variety $A$, of the basis elements used in Blasius' Theorem \ref{blasius_period_relation}, which a priori are rather implicit in nature. This is crucial in our approach: In the next sections, we will turn our attention to the de Rham realization $M(\Xi)_\dR$ and construct elements in $F^{d(\alpha)+d(\beta)} RM(\Xi)_\dR$ by using Eisenstein-Kronecker classes as defined by Kings-Sprang in \cite{kingssprang}. The ultimate goal is then to compute the period $v(\chi)$ occuring in Blasius Theorem and for this we need the explicit presentation of Definition \ref{def_a_eta_elements}.

\section{Eisenstein-Kronecker classes}\label{section_7_ek_classes}
In this section, we recall the definition of the Eisenstein-Kronecker classes and along the way we adapt some results of \cite{kingssprang} to also account for all the Galois conjugates of these classes.  
In the following section, the alternative presentation (\ref{alt_constr_m_xi}) of the reflex motive will be used to show that the Eisenstein-Kronecker classes of \cite{kingssprang} naturally can be regarded as de Rham classes of the reflex motive. 
\subsection{Review of Eisenstein-Kronecker classes}

Let us briefly review the definition of the Eisenstein-Kronecker classes by Kings-Sprang, \cite{kingssprang}. For details we refer the reader to section 2 of loc.\ cit.

Let $S$ be a Noetherian scheme and $A/S$ an abelian scheme of relative dimension $g$ with unit section $e: S \to A$. We denote the dual abelian scheme by $A^\vee/S$ and write $e^\vee$ for its unit section and $\sP$ for the Poincaré bundle on $A \times_S A^\vee$.
Moreover, let $A^\natural$ be the universal vector extension of $A^\vee/S$, let $e^\natural$ denote its unit section and let $(\sP^\natural, \nabla)$ denote the universal line bundle with integrable connection on $A \times_S A^\natural$. 
Let $\widehat{\sP}$ (resp.\ $\widehat{\sP}^\natural$) denote the formal completion of $\sP$ (resp.\ $\sP^\natural$) along $\id \times e^\vee : A \to A \times_S A^\vee$ (resp.\ $\id \times e^\natural$), regarded as sheaves on $A$. The connection $\nabla$ induces a connection on $\widehat{\sP}^\natural$, which we denote by the same letter. 

Let $\Gamma$ be a discrete group acting on $A/S$ from the left. 
This induces structures of $\Gamma$-equivariant sheaves on $\Omega^1_{A/S}$, $\widehat{\sP}$ and $\widehat{\sP}^\natural$. In the following $H^i(A,\Gamma, -)$ denotes equivariant sheaf cohomology, i.e.\ the $i$th right derived functor of $H^0(A,-)^\Gamma$, cf.\ \cite{tohoku}, Chapitre V or \cite{kingssprang}, Appendix A.

\begin{theorem}[\cite{kingssprang}, Corollary 2.17]\label{cohomology_poincare_bundle}
One has 
\[
H^i(A, \Gamma, \widehat{\sP} \otimes \Omega^g_{A/S}) \cong \begin{cases} H^0(S,\cO_S) \textrm{ if } i = g \\ 0 \textrm{ if } i < g. \end{cases}
\]
\end{theorem}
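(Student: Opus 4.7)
The plan is to combine a Grothendieck spectral sequence, which relates equivariant to ordinary sheaf cohomology, with a Fourier--Mukai-style computation of the ordinary cohomology. Since the equivariant global sections functor $H^0(A,-)^\Gamma$ factors as the composition of $H^0(A,-)$ and $(-)^\Gamma$, I would first invoke the Grothendieck spectral sequence
\[
E_2^{p,q} = H^p(\Gamma, H^q(A, \widehat{\sP} \otimes \Omega^g_{A/S})) \Rightarrow H^{p+q}(A, \Gamma, \widehat{\sP} \otimes \Omega^g_{A/S}).
\]
If I can show that the non-equivariant cohomology $H^q(A, \widehat{\sP} \otimes \Omega^g_{A/S})$ vanishes for $q < g$ and equals $H^0(S,\cO_S)$ with trivial $\Gamma$-action for $q = g$, then both parts follow: for $i < g$ every $E_2$ term on the diagonal vanishes, while for $i = g$ only $E_2^{0,g}$ contributes (there is no room for non-trivial differentials since every lower row is zero), and it identifies with $H^0(S, \cO_S)^\Gamma = H^0(S, \cO_S)$.

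The heart of the argument is the non-equivariant computation. The Fourier--Mukai transform for the abelian scheme $A/S$ gives $R(p_1)_{\ast} \sP \cong i_{e,\ast} \cO_S[-g]$, where $p_1 \colon A \times_S A^\vee \to A$ and $i_e \colon S \to A$ is the zero section; the pushforward is thus concentrated in cohomological degree $g$. I would adapt this to the formal completion by working through the system of infinitesimal neighborhoods $A \times T_n$ of $A \times_S \{e^\vee\}$, where $T_n = \Spec(\cO_{A^\vee, e^\vee}/\mathfrak{m}^{n+1})$ is finite over $S$. Because $p_1$ is finite when restricted to $A \times T_n$, pushforward is exact there, and each finite level can be computed by combining the projection formula with the identification $i_e^\ast \Omega^g_{A/S} \cong \cO_S$ (canonical trivialization of the top form at the origin) and Serre duality for $A/S$. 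Passing to the inverse limit, provided the usual Mittag-Leffler / $\varprojlim^1$ conditions hold, yields the desired vanishing below degree $g$ and the identification with $H^0(S,\cO_S)$ in degree $g$.

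To finish, I would verify triviality of the $\Gamma$-action on $H^g(A, \widehat{\sP} \otimes \Omega^g_{A/S}) \cong H^0(S, \cO_S)$. Since $\Gamma$ acts on $A/S$ through $S$-automorphisms and the above identification is realized by a trace map that is natural with respect to such automorphisms, the induced action on $H^0(S, \cO_S)$ must be trivial.

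The main obstacle will be making the Fourier--Mukai argument precise in the formal setting: completion along $A \times \{e^\vee\}$ and derived pushforward along $p_1$ do not obviously commute, and passage to the inverse limit must be handled carefully. A cleaner alternative, closer in spirit to Kings--Sprang, would be to construct an explicit Koszul-type resolution of $\widehat{\sP} \otimes \Omega^g_{A/S}$ adapted to the identity section of $A^\vee$ whose terms have directly computable cohomology, bypassing the limiting argument altogether.
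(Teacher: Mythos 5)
First, note that the paper does not prove this statement at all: it is imported verbatim from Kings--Sprang (their Corollary 2.17), so the only comparison available is with the argument in that reference. Your outline does reproduce the shape of their proof: reduce the equivariant statement to the non-equivariant one via the Grothendieck spectral sequence $H^p(\Gamma, H^q(A,-)) \Rightarrow H^{p+q}(A,\Gamma,-)$ (the degeneration analysis for $i \le g$ and the triviality of the $\Gamma$-action on the top cohomology via naturality of the trace are both fine), and compute $H^q(A, \widehat{\sP}\otimes\Omega^g_{A/S})$ from the concentration of the Fourier--Mukai transform of $\sP$ in degree $g$ at the zero section, working through the infinitesimal neighborhoods $T_n$ of $e^\vee$.

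There is, however, one point where your plan as written would fail. It is not true that ``each finite level'' is concentrated in degree $g$: for $n \ge 1$ the cohomology $H^i(A\times_S T_n, \sP\otimes\Omega^g_{A/S})$ is nonzero in degrees $i<g$. Concretely, writing $I$ for the ideal of $e^\vee$, the restriction $\sP|_{A\times T_1}$ is an extension of $\cO_A$ by $\cO_A\otimes I/I^2$, so already $H^0(A\times T_1,\sP\otimes\Omega^g_{A/S}) \supseteq \omega^g_{A/S}\otimes I/I^2 \neq 0$; in derived terms, the base change of $R(p_2)_\ast\sP \simeq (e^\vee)_\ast(\cdot)[-g]$ along the non-flat immersion $T_n \hookrightarrow A^\vee$ produces Koszul-type $\mathcal{T}or_j$ contributions in degrees $g-j<g$. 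Consequently the vanishing for $i<g$ cannot be extracted from a Mittag--Leffler or $\varprojlim^1$ condition alone: Mittag--Leffler controls only $\varprojlim^1$, whereas here one must show that the inverse system $\{H^i(A\times T_n, \sP\otimes\Omega^g_{A/S})\}_n$ is \emph{pro-zero} for $i<g$ (the transition maps are eventually zero), so that both $\varprojlim$ and $\varprojlim^1$ vanish. This pro-vanishing of the lower Koszul homology along the tower $\cO_{A^\vee}/I^{n+1}$ is precisely the content of the computation in the cited reference, and it is also what your proposed ``Koszul-type resolution'' alternative would have to establish; so you have correctly located the hard step, but the mechanism you name for crossing it is not the right one.
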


Let now $D \subset A$ denote a $\Gamma$-stable closed subscheme which arises as the kernel of an étale isogeny $\delta: A \to A'$ with étale dual. Let $U_D = A \setminus D$ be its open complement. Let
$\cO_S[D]^0 = \ker(H^0(D,\cO_D) \xto{\tr} H^0(S,\cO_S))$ be the kernel of the trace map and let $\cO_S[D]^{0,\Gamma}$ denote its $\Gamma$-invariants.
By Theorem \ref{cohomology_poincare_bundle}, the localization sequence in equivariant cohomology gives rise to an exact sequence
\begin{equation}\label{localization_sequence}
0 \to H^{g-1}(U_D, \Gamma, \widehat{\sP} \otimes \Omega^g_{A/S}) \to H^g_D(A,\Gamma, \widehat{\sP} \otimes \Omega^g_{A/S}) \to H^0(S,\cO_S).
\end{equation}

\begin{theorem}[\cite{kingssprang}, Theorem 2.20]\label{EK_map}
There exists a canonical inclusion
\[
\cO_S[D]^\Gamma \into H^g_D(A,\Gamma,\widehat{\sP} \otimes \Omega^g_{A/S})
\]
whose composite with the map $H^g_D(A,\Gamma, \widehat{\sP} \otimes \Omega^g_{A/S}) \to H^0(S,\cO_S)$ from (\ref{localization_sequence}) agrees with the trace map $\tr: \cO_S[D]^\Gamma \to H^0(S,\cO_S)$. In particular, one obtains an induced injection
\[
\EK_{\Gamma, A} : \cO_S[D]^{0,\Gamma} \into H^{g-1}(U_D, \Gamma, \widehat{\sP} \otimes \Omega^g_{A/S}).
\]
Moreover, for $f \in \cO_S[D]^{0,\Gamma}$, we let 
\[
\EK_{\Gamma,A}^\natural(f) \in H^{g-1}(U_D,\Gamma, \widehat{\sP}^\natural \otimes \Omega^g_{A/S})
\]
denote its image under the canonical map induced by $\widehat{\sP} \to \widehat{\sP}^\natural$.
We drop the abelian scheme $A$ from the notation when it is apparent from the context.
\end{theorem}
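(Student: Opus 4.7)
My plan is to reduce the construction to a residue / local duality computation along the regular closed immersion $D \hookrightarrow A$. The hypothesis that $\delta: A \to A'$ is an étale isogeny guarantees that $D = \ker(\delta)$ is étale over $S$ and therefore locally cut out in $A$ by a regular sequence of length $g$. I will first work non-equivariantly, dropping $\Gamma$ from the notation, and then obtain everything with $\Gamma$-action by functoriality. For any coherent sheaf $\mathcal{F}$ on $A$ which is locally free in a neighborhood of $D$, a Koszul calculation shows that the local cohomology sheaves $\underline{H}^i_D(\mathcal{F} \otimes \Omega^g_{A/S})$ vanish for $i < g$ and that the top sheaf is canonically identified with $\mathcal{F}|_D$ via the Grothendieck residue.

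Applying this to $\mathcal{F} = \widehat{\sP}$ — whose restriction $\widehat{\sP}|_D$ carries a distinguished $\cO_D$-subsheaf coming from the rigidification $\sP|_{A \times e^\vee} \cong \cO_A$, embedded into the completion as its "constant term" — I obtain a natural map
\[
\cO_S[D] = H^0(D, \cO_D) \hookrightarrow H^0(D, \widehat{\sP}|_D) \cong H^0(D, \underline{H}^g_D(\widehat{\sP} \otimes \Omega^g_{A/S})) \to H^g_D(A, \widehat{\sP} \otimes \Omega^g_{A/S}),
\]
where the last arrow is the edge map of the local-to-global spectral sequence (and is injective since the source is concentrated in a single cohomological degree on the étale $D$). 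Injectivity of the entire composite follows by post-composing with the map induced from the augmentation $\widehat{\sP} \to \cO_A$: the result recovers the classical residue isomorphism $\cO_S[D] \xto{\sim} H^g_D(A, \Omega^g_{A/S})$.

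To verify the trace compatibility, I will use that under the identification $H^g_D(A, \Omega^g_{A/S}) \cong \cO_S[D]$, the map $H^g_D(A, \Omega^g_{A/S}) \to H^g(A, \Omega^g_{A/S}) \cong H^0(S, \cO_S)$ from the localization sequence is exactly the sum-of-residues, i.e.\ the trace along $D \to S$. By the functoriality of the construction with respect to $\widehat{\sP} \to \cO_A$, the same holds for the analogous composition with $\widehat{\sP}$. Since every ingredient (residue, rigidification, augmentation) is canonical, all maps are automatically $\Gamma$-equivariant, so passing to $\Gamma$-invariants yields the asserted inclusion $\cO_S[D]^\Gamma \hookrightarrow H^g_D(A, \Gamma, \widehat{\sP} \otimes \Omega^g_{A/S})$ with composition equal to the equivariant trace.

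The final assertion then drops out of the exact sequence (\ref{localization_sequence}): any $f \in \cO_S[D]^{0, \Gamma}$ has vanishing trace by definition, so its image in $H^g_D(A, \Gamma, \widehat{\sP} \otimes \Omega^g_{A/S})$ maps to $0$ in $H^0(S, \cO_S)$ and therefore lifts uniquely to a class $\EK_{\Gamma, A}(f) \in H^{g-1}(U_D, \Gamma, \widehat{\sP} \otimes \Omega^g_{A/S})$. The $\natural$-variant $\EK^\natural_{\Gamma, A}(f)$ is obtained by pushing forward along $\widehat{\sP} \to \widehat{\sP}^\natural$. The main technical obstacle I anticipate is making the residue identification rigorous in the \emph{formal} setting, i.e.\ checking that the residue isomorphism for the finite-order truncations $\sP_n$ of $\widehat{\sP}$ passes cleanly to the inverse limit. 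I expect this to be handled by a Mittag-Leffler argument applied to the Koszul resolution of $\cO_D$, together with the fact that each truncated piece is locally free over $\cO_A$ in a neighborhood of $D$, so that local cohomology commutes with the inverse limit.
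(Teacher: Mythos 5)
The paper does not prove this statement at all --- it is quoted from Kings--Sprang, Theorem 2.20, and the present paper only indicates (in the proof of Proposition \ref{prop_base_change}) that the construction there runs through the fundamental local isomorphism of Grothendieck duality. Your overall architecture is the same as that of the cited construction and is sound: purity of local cohomology along the regular immersion $D \into A$ of codimension $g$, identification of $\underline{H}^g_D(\mathcal{F} \otimes \Omega^g_{A/S})$ with $\mathcal{F}|_D$ for $\mathcal{F}$ locally free near $D$ (using $D$ étale over $S$ to cancel the determinant of the normal bundle against $\Omega^g_{A/S}|_D$), the reduction of the trace compatibility to the classical residue theorem via the augmentation $\widehat{\sP} \to \cO_A$, the degeneration of the equivariant spectral sequence in total degree $g$ forced by the vanishing of the lower local cohomology, and the lift along the localization sequence (\ref{localization_sequence}) for trace-zero elements. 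The Mittag--Leffler issue you flag for the inverse limit is real but handled exactly as you propose.

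The genuine gap is the step producing the section $\cO_D \to \widehat{\sP}|_D$, which you attribute to ``the rigidification $\sP|_{A \times e^\vee} \cong \cO_A$, embedded into the completion as its constant term.'' The augmentation $\widehat{\sP} \to \sP_0 \cong \cO_A$ exists, but it admits no canonical splitting: already the first-order truncation sits in an extension $0 \to \cO_A \otimes_{\cO_S} \omega_{A^\vee/S} \to \sP_1 \to \cO_A \to 0$ whose class in $H^1(A,\cO_A) \otimes \omega_{A^\vee/S} \cong \End_{\cO_S}(H^1(A,\cO_A))$ is the identity (this is precisely the universality of $\sP$), so there is no global ``constant term''; and although the extension splits after restriction to the finite scheme $D$, the splittings form a torsor under $\cO_S[D] \otimes \omega_{A^\vee/S}$ with no preferred point supplied by the rigidification alone. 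The canonical trivialization of $\widehat{\sP}|_D$ --- the actual heart of the theorem --- comes from the half of the hypothesis you never use: since $D = \ker(\delta)$, the compatibility $(\id \times \delta^\vee)^\ast \sP_A \cong (\delta \times \id)^\ast \sP_{A'}$ trivializes $\sP_A$ over $\ker(\delta) \times (A')^\vee$, and the étaleness of $\delta^\vee$ identifies the formal completions of $(A')^\vee$ and $A^\vee$ at the origin, yielding a canonical trivialization of $\widehat{\sP}|_D$ compatible with the rigidification. You invoke the étaleness of $\delta$ only to see that $D$ is a regular closed subscheme; the étaleness of the dual isogeny is what makes the map $\cO_S[D]^\Gamma \into H^g_D(A,\Gamma,\widehat{\sP} \otimes \Omega^g_{A/S})$ exist at all. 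Once that section is in place, your injectivity and trace arguments go through.
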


For every integer $a \geq 0$, the connection $\nabla$ on $\widehat{\sP}^\natural$ induces a map
\[
\nabla^a : \widehat{\sP}^\natural \to \TSym^a(\Omega^1_{A/S}) \otimes \widehat{\sP}^\natural
\]
and we obtained an induced class
\[
\nabla^a \EK_\Gamma^\natural(f) \in H^{g-1}(U_D, \Gamma, \TSym^a(\Omega^1_{A/S}) \otimes \widehat{\sP}^\natural \otimes \Omega^g_{A/S}).
\]
Let $b \geq 0$ be an integer. Let $x: S \to U_D$ be a $\Gamma$-stable torsion section in the kernel of an isogeny $\varphi: A \to B$ which admits an étale dual. Let us abbreviate $\sH = H^1_\dR(A^\vee/S)$. By pulling back along $x$ and applying the moment map
${}_\rho \mom^b_x : x^\ast \widehat{\sP}^\natural \to \TSym^b(\sH)$ (cf.\ \cite{kingssprang}, Definition 2.10), one obtains the class
\[
{}_\rho \mom^b_x ( x^\ast \nabla^a \EK_\Gamma^\natural(f)) \in H^{g-1}(S, \Gamma, \TSym^a(\omega_{A/S}) \otimes \TSym^b(\sH) \otimes \omega^g_{A/S}).
\]
Assume from now on that $S$ is affine. Under this assumption the equivariant cohomology spectral sequence degenerates, so that
\[
H^{g-1}(S, \Gamma, \TSym^a(\omega_{A/S}) \otimes \TSym^b(\sH) \otimes \omega^g_{A/S}) \cong H^{g-1}(\Gamma, \TSym^a(\omega_{A/S}) \otimes \TSym^b(\sH) \otimes \omega^g_{A/S})
\]
and we obtain a class
\[
\EK^{b,a}_{\Gamma}(f,x) \in H^{g-1}(\Gamma, \TSym^a(\omega_{A/S}) \otimes \TSym^b(\sH) \otimes \omega_{A/S}^g).
\]

\begin{proposition}[Compatibility under base change] \label{prop_base_change}
Let the notations be as above. Let $\sigma: T \to S$ be a morphism between affine Noetherian schemes and let $A^\sigma/T$ denote the base change of $A/S$ along $\sigma$. Then $A^\sigma$ acquires an induced action by $\Gamma$. Let $D^\sigma$ denote the base change of $D$ along $\sigma$. For every $f \in \cO_S[D]^{0,\Gamma}$ let $f^\sigma$ denote the image of $f$ under the canonical map
$\cO_S[D]^{0,\Gamma} \to \cO_S[D^\sigma]^{0,\Gamma}$ induced by $D^\sigma \to D$. 
Let $x^\sigma$ denote the torsion section of $A^\sigma/S$ induced by $x$. There is a canonical map of $\Gamma$-modules
\[
\TSym^a(\omega_{A/S}) \otimes \TSym^b(\sH_A) \otimes \omega_{A/S}^g \to \TSym^a(\omega_{A^\sigma/T}) \otimes \TSym^b(\sH_{A^\sigma}) \otimes \omega_{A^\sigma/T}^g
\]
and the induced map on group cohomology sends $\EK^{b,a}_{\Gamma,A}(f,x)$ to $\EK^{b,a}_{\Gamma,A^\sigma}(f^\sigma,x^\sigma)$.
\end{proposition}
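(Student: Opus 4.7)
The plan is to observe that the class $\EK^{b,a}_{\Gamma,A}(f,x)$ is assembled from a chain of canonical constructions, each of which is functorial under arbitrary base change; base change compatibility then follows by concatenating these functorialities. Throughout, I would use that the formation of the dual abelian scheme $A^\vee$, the Poincaré bundle $\sP$, the universal vector extension $A^\natural$, and the universal line bundle with connection $(\sP^\natural, \nabla)$ all commute with base change, because they are characterized by universal properties relative to the base. The same is true for the formal completions $\widehat{\sP}$ and $\widehat{\sP}^\natural$ along the respective unit sections, since these are defined in terms of the ideal of the unit section, which pulls back to the ideal of the unit section of $A^\sigma$.

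First, I would verify that the inclusion $\cO_S[D]^\Gamma \into H^g_D(A,\Gamma, \widehat{\sP} \otimes \Omega^g_{A/S})$ of Theorem \ref{EK_map} is compatible with $\sigma^\ast$. Inspecting its construction in \cite{kingssprang}, this inclusion factors through canonical isomorphisms identifying the relevant local cohomology along $D$ with sections of $\cO_D$, all obtained by natural operations (pushforward along $D \into A$, the trivialization of $\widehat{\sP}$ along $\id \times e^\vee$, etc.), and therefore sits in a commutative square with its analogue for $A^\sigma$. Combined with the functoriality of the localization sequence (\ref{localization_sequence}) under $\sigma^\ast$, this gives the base change compatibility of $\EK_{\Gamma,A}(f) \mapsto \EK_{\Gamma,A^\sigma}(f^\sigma)$.

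Next, the canonical morphism $\widehat{\sP} \to \widehat{\sP}^\natural$ passes from the Poincaré bundle on $A\times_S A^\vee$ to the one on $A \times_S A^\natural$ via the universal extension and is compatible with base change, hence so is $\EK^\natural$. The iterated connection $\nabla^a : \widehat{\sP}^\natural \to \TSym^a(\Omega^1_{A/S}) \otimes \widehat{\sP}^\natural$ is induced from the universal integrable connection and hence commutes with base change; the torsion section $x$ pulls back to $x^\sigma$; and the moment map ${}_\rho\mom^b_x$ (\cite{kingssprang}, Definition 2.10) is constructed functorially from $x^\ast \widehat{\sP}^\natural$ and the de Rham cohomology of $A^\vee/S$, so is also compatible with $\sigma^\ast$. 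Finally, since both $S$ and $T$ are affine, the equivariant cohomology spectral sequence collapses on each side to give $H^{g-1}(-,\Gamma,-) = H^{g-1}(\Gamma,-)$, and this identification is natural in the base. Splicing these steps produces the asserted equality in group cohomology.

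The main technical obstacle is the first step: ensuring that the explicit construction of the inclusion in Theorem \ref{EK_map} truly is canonical enough to commute with $\sigma^\ast$, as opposed to only commuting up to some identification that might obstruct functoriality. Once this is confirmed by reading the construction in \cite{kingssprang} carefully, the remaining ingredients are of a purely formal functorial nature and the proof reduces to a diagram-chase.
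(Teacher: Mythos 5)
Your proposal follows essentially the same route as the paper: one checks that each step of the construction commutes with base change (duality of abelian schemes, Poincar\'e bundle, universal vector extension, connection, moment map, degeneration of the equivariant spectral sequence over an affine base), the only genuinely nontrivial point being the inclusion of Theorem \ref{EK_map}, which you correctly isolate but leave open. The paper closes exactly that gap by observing that this inclusion is built from the fundamental local isomorphism of Grothendieck duality, whose base-change compatibility is \cite{hartshorne}, III, 7.4(b) in the flat case and \cite{conrad}, p.~52 in general --- and since $\sigma$ is not assumed flat, the general case is genuinely needed.
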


\begin{proof}
One easily checks that every step in the construction of $\EK^{b,a}_\Gamma(f,x)$ is compatible under base change. For the most part, this essentially boils down to the fact that $(A^\sigma)^\vee \cong A^\vee \times_S T$ and that Poincaré bundle on $A^\sigma \times (A^\sigma)^\vee$ is the base change of $\sP$ along the induced map $A^\sigma \times (A^\sigma)^\vee \to A \times A^\vee$. Regarding the Eisenstein-Kronecker map of Theorem \ref{EK_map}, note that the fundamental local isomorphism in Grothendieck duality theory, which is used in the construction (cf.\ \cite{kingssprang}, section 2.6) is compatible with base change, cf.\ \cite{hartshorne}, III, 7.4(b) for flat base change and \cite{conrad}, p.\ 52 for the general case.
\end{proof}

\subsection{The case of CM abelian varieties}
Let $L$ be a totally complex number field. We now specialize to the case where $A/F$ is an abelian variety with complex multiplication by $\cO_L$ defined over a number field $F$ which contains the Galois closure of $L$. More generally, one could assume that $A$ is defined over an $\cO_{L^{\Gal}}[1/d_L]$-algebra, where $d_L$ denotes the discriminant of $L/\QQ$. Let $\Gamma \subset \cO_L^\times$ be a subgroup of finite index. The CM-structure on $A/F$ induces a left $\Gamma$-action on $A/F$ via automorphisms. 
In this way, the $F$-vector spaces $\Lie(A/F)$, $\omega_{A^\vee/F}$ and $\sH_A = H^1_\dR(A^\vee/F)$ acquire induced $\Gamma$-module structures by functoriality. On $\omega_{A/F}$ we consider the $\Gamma$-action dual to the one on $\Lie(A/F)$.
Moreover, all of the above $F$-vector spaces carry the structure of $L \otimes_\QQ F$-modules by functoriality. 
Let $A/F$ be of CM-type $(L,\Phi)$. By the assumption that $F$ contains the Galois closure of $L$, we obtain decompositions
\begin{align*}
\Lie(A/F) &= \bigoplus_{\sigma \in \Phi} \Lie(A/F)(\sigma), & \omega_{A/F} &= \bigoplus_{\sigma \in \Phi} \omega_{A/F}(\sigma), \\
\omega_{A^\vee/F} &= \bigoplus_{\ol{\sigma} \in \ol{\Phi}} \omega_{A^\vee/F}(\ol{\sigma}),  & \sH_A &= \bigoplus_{\sigma \in J_L} \sH_A(\sigma)
\end{align*}
as $L \otimes F$-modules. The group action of an element $\gamma \in \Gamma$ on $\Lie(A/F)(\sigma)$, $\omega_{A^\vee/F}(\sigma)$ and $\sH_A(\sigma)$ is given by multiplication by $\sigma(\gamma)$, whereas the action on $\omega_{A/F}(\sigma)$ is given by $\sigma(\gamma)^{-1}$. Note that our notation slightly deviates from the one used in \cite{kingssprang}.
It follows from the above decompositions that the Hodge filtration
\begin{equation}\label{hodge_filtration}
0 \to \omega_{A^\vee/F} \to \sH_A \to \Lie(A/F) \to 0
\end{equation}
canonically splits.

Let $a, b \geq 0$ be non-negative integers. The splitting of (\ref{hodge_filtration}) provides a projection 
\begin{equation}\label{hodge_projection}
\TSym^b(\sH_A) \to \TSym^b(\omega_{A^\vee/F}).
\end{equation}
Recall the notation introduced in (\ref{sym_mu_notation}) for $\TSym$. For every $\alpha, \beta \in I_L^+$, the group $\Gamma$ acts on 
$\TSym^\alpha(\omega_{A/F}) \otimes \TSym^\beta(\omega_{A^\vee/F}) \otimes \omega_{A/F}^g$ via $\beta - \alpha - 1_\Phi$. Therefore, the $\Gamma$-invariants of $\TSym^a(\omega_{A/F}) \otimes \TSym^b(\omega_{A^\vee/F}) \otimes \omega_{A/F}^g$ decompose into the direct sum over all 
\begin{equation}\label{alpha_beta_component}
\TSym^\alpha(\omega_{A/F}) \otimes \TSym^\beta(\omega_{A^\vee/F}) \otimes \omega_{A/F}^g,
\end{equation}
where $\beta - \alpha - 1_\Phi$ is of Hecke character type with respect to $\Gamma$ and critical.
In particular, for critical $\beta - \alpha - 1_\Phi$, each $\TSym^\alpha(\omega_{A/F}) \otimes \TSym^\beta(\omega_{A^\vee/F}) \otimes \omega_{A/F}^g$ is a $\Gamma$-stable direct summand of $\TSym^a(\omega_{A/F}) \otimes \TSym^b(\omega_{A^\vee/F}) \otimes \omega_{A/F}^g$.

\begin{proposition}[\cite{kingssprang}, Proposition 2.27]\label{fundamental_class_prop}
For $\beta - \alpha - 1_\Phi \in \Crit_L(\Gamma)$, there is a canonical homomorphism
\begin{align*}
&H^{g-1}(\Gamma, \TSym^\alpha(\omega_{A/F}) \otimes \TSym^\beta(\omega_{A^\vee/F}) \otimes \omega_{A/F}^g) \to \\
&\TSym^{\alpha+1_\Phi}(\omega_{A/F}) \otimes \TSym^\beta(\omega_{A^\vee/F}).
\end{align*}
\end{proposition}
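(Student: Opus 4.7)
The strategy is to reduce the statement to a canonical evaluation of $(g-1)$st group cohomology, and the key observation is that the critical-type hypothesis forces the $\Gamma$-action on the natural target to be trivial.

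First, since $F$ contains the Galois closure of $L$, the decompositions $\omega_{A/F} = \bigoplus_{\sigma \in \Phi} \omega_{A/F}(\sigma)$ and $\omega_{A^\vee/F} = \bigoplus_{\ol\sigma \in \ol\Phi} \omega_{A^\vee/F}(\ol\sigma)$ give a canonical identification
\[
\omega_{A/F}^g = \bigotimes_{\sigma \in \Phi} \omega_{A/F}(\sigma) \cong \TSym^{1_\Phi}(\omega_{A/F}).
\]
Combining this with the multiplication $\TSym^{\alpha(\sigma)} \otimes \TSym^1 \to \TSym^{\alpha(\sigma)+1}$ in the graded algebra $\TSym^\bullet(\omega_{A/F}(\sigma))$ factorwise over $\sigma \in \Phi$ (and with $\id$ on the $\TSym^\beta(\omega_{A^\vee/F})$-factor) yields a canonical $L \otimes F$-linear homomorphism
\[
m: \TSym^\alpha(\omega_{A/F}) \otimes \TSym^\beta(\omega_{A^\vee/F}) \otimes \omega_{A/F}^g \longrightarrow \TSym^{\alpha+1_\Phi}(\omega_{A/F}) \otimes \TSym^\beta(\omega_{A^\vee/F}).
\]

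Second, I would verify $\Gamma$-equivariance of $m$. By functoriality, $\gamma \in \Gamma$ acts on $\omega_{A/F}(\sigma)$ by $\sigma(\gamma)^{-1}$ and on $\omega_{A^\vee/F}(\ol\sigma)$ by $\ol\sigma(\gamma)$, so both sides of $m$ carry the $\Gamma$-action given by the same character $\gamma \mapsto \gamma^{\beta - \alpha - 1_\Phi}$. The hypothesis $\beta - \alpha - 1_\Phi \in \Crit_L(\Gamma)$ implies in particular that this character is of Hecke character type with respect to $\Gamma$, i.e.\ trivial on $\Gamma$. Consequently the target of $m$ is a trivial $\Gamma$-module.

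Third, by Dirichlet's unit theorem, $\cO_L^\times$ has rank $g-1$ since $L$ is totally imaginary of degree $2g$, so $\Gamma$ is virtually isomorphic to $\ZZ^{g-1}$. Hence $H^{g-1}(\Gamma, \QQ) \cong \QQ$ carries a canonical generator coming from the logarithmic embedding $\Gamma \into \RR^\Phi$ (landing in the trace-zero hyperplane), with orientation specified intrinsically by the CM-type $\Phi$. Pairing with the associated fundamental class $[\Gamma] \in H_{g-1}(\Gamma, \ZZ)$ yields a canonical evaluation
\[
\mathrm{ev}_\Gamma: H^{g-1}(\Gamma, N) \longrightarrow N
\]
for every trivial $\Gamma$-module $N$. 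The desired homomorphism is then obtained as the composite of $H^{g-1}(\Gamma, m)$ with $\mathrm{ev}_\Gamma$.

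The main subtlety I expect is the fourth step: one must verify that the fundamental class $[\Gamma]$ really is canonical, i.e.\ that its sign (and its scaling, once one works integrally) is pinned down by the datum $(L, \Phi)$ alone and does not depend on auxiliary choices such as a basis of $\Gamma$ modulo torsion. This is standard but requires one to exhibit the natural orientation on the trace-zero hyperplane of $\RR^\Phi$ coming from the ordering of the factors in $\Phi$; once this is in place, the rest of the construction is purely formal.
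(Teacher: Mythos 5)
Your overall architecture matches the paper's: reduce to trivial coefficients using criticality, evaluate $H^{g-1}(\Gamma,-)$ against a fundamental class of the rank-$(g-1)$ unit group, and absorb the $\omega_{A/F}^g$-factor into $\TSym^{1_\Phi}(\omega_{A/F})$. But there is a genuine gap in how you establish canonicity, and it sits exactly where you anticipated trouble. Neither of the two identifications you declare canonical actually is: the isomorphism $\omega_{A/F}^g = \Lambda^g\omega_{A/F} \cong \bigotimes_{\sigma\in\Phi}\omega_{A/F}(\sigma) \cong \TSym^{1_\Phi}(\omega_{A/F})$ requires an ordering of $\Phi$ to match a wedge with an unordered tensor, and it changes sign under an odd permutation of that ordering; likewise the generator of $H^{g-1}(\Gamma,\ZZ)$ (your $[\Gamma]$) is fixed only by an orientation of the norm-one hypersurface $L^1_{\RR,\Phi}\subset\RR_{>0}^{\Phi}$, which again requires an ordering of $\Phi$. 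Your proposed fix --- ``the natural orientation coming from the ordering of the factors in $\Phi$'' --- does not exist: $\Phi$ is an unordered subset of $J_L$ and carries no intrinsic ordering. So, as written, both your step-one map $m$ and your step-three evaluation $\mathrm{ev}_\Gamma$ are only defined up to sign.

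The way out, and this is what the paper does, is to observe that the two sign ambiguities cancel: a transposition in the chosen ordering of $\Phi$ simultaneously flips the orientation of $L^1_{\RR,\Phi}$ (hence the generator of $H^{g-1}(\Gamma,\ZZ)$) and the sign of $\omega_{A/F}^g\cong\TSym^{1_\Phi}(\omega_{A/F})$, so the single combined map $\can\colon H^{g-1}(\Gamma,\ZZ)\otimes\omega_{A/F}^g\to\TSym^{1_\Phi}(\omega_{A/F})$ is independent of the ordering even though neither factor is. This is not a pedantic point here: Proposition \ref{SignLemma} extracts the sign $\varepsilon_\Phi(\sigma)$ precisely by comparing orderings of $\Phi$ and $\sigma\Phi$ through this construction, so the sign bookkeeping must be carried out at the level of the combined map. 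Two smaller omissions: you should first reduce to the case that $\Gamma$ is free, since a finite-index subgroup of $\cO_L^\times$ may contain roots of unity; and the evaluation step should be set up integrally (via the universal coefficient theorem over $\ZZ$) rather than over $\QQ$ if the normalization is to match the one used later.
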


\begin{proof}
Let us review the construction in the case where $\Gamma$ is a free subgroup of finite index in $\cO_L^\times$. The general case is easily reduced to this case. Note that $\Gamma$ is of rank $g-1$ by Dirichlet's unit theorem.
Since the action of $\Gamma$ on $\TSym^\alpha(\omega_{A/F}) \otimes \TSym^\beta(\omega_{A^\vee/F}) \otimes \omega_{A/F}^g$ is trivial, it suffices by the universal coefficient theorem to construct a canonical homomorphism
\[
\can: H^{d-1}(\Gamma,\ZZ) \otimes_\ZZ \omega_{A/F}^g \to \TSym^{1_\Phi}(\omega_{A/F})
\]
Choose an \emph{ordering of the CM-type $\Phi$}. This induces an orientation on 
the norm $1$ hypersurface 
\[
L^1_{\RR,\Phi} \coloneqq \Bigl\{(r_\tau)_{\tau \in \Phi} \in \RR_{>0}^\Phi \mid \prod_{\tau \in \Phi} r_\tau = 1\Bigr\} \subset \RR_{>0}^\Phi
\]
on which the group $\Gamma$ acts freely via 
$
\gamma \cdot (r_\tau)_{\tau \in \Phi} = (|\tau(\gamma)|^2 r_\tau)_{\tau \in \Phi}$.
This gives rise to an isomorphism 
\begin{align}\label{topological_fund_class_via_ordering}
H^{g-1}(\Gamma,\ZZ) = H^{g-1}(\Gamma \bs L^1_{\RR,\Phi}, \ZZ) \cong \ZZ.
\end{align}
The choice of ordering on $\Phi$ also determines an isomorphism
\begin{align}\label{alternating_equals_symmetric_ordering}
\omega_{A/F}^g \cong \TSym^{1_\Phi}(\omega_{A/F})
\end{align}
whose inverse sends a basis element $\bigotimes_{\tau \in \Phi} \omega(\tau)$ for $0 \neq \omega(\tau) \in \omega_{A/F}(\tau)$ to $\bigwedge_{\tau \in \Phi} \omega(\tau)$ with respect to the chosen ordering.
The isomorphisms (\ref{topological_fund_class_via_ordering}) and (\ref{alternating_equals_symmetric_ordering}) together then give rise to an isomorphism
\[
\can: H^{g-1}(\Gamma,\ZZ) \otimes \omega_{A/F}^g \xto{\sim} \TSym^{1_\Phi}(\omega_{A/F}) 
\]
which is \emph{independent of the choice of the ordering of $\Phi$}.
\end{proof}

\begin{definition}\label{final_ek_class_def}
Let $A/F$ be an abelian variety with CM by $\cO_L$ of CM-type $\Phi$. Let $\beta - \alpha - 1_\Phi \in \Crit_L(\Gamma)$ be a critical Hecke character type and put $a = d(\alpha)$, $b = d(\beta)$. Let $\frf$ and $\frc$ be coprime integral ideals and let $D \coloneqq A[\frc]$ and $x \in U_D(F)$ a $\frf$-torsion section. Let $\Gamma \subset \cO_L^\times$ be a finite index subgroup fixing $x$. Let $\frf \in F[D]^{0,\Gamma}$. Consider the projection of $\EK_\Gamma^{b,a}(f,x)$ to (\ref{alpha_beta_component}) via the splitting of the Hodge filtration. We denote its image under the canonical homomorphism of Proposition \ref{fundamental_class_prop} by
\[
\EK_{A,\Gamma}^{\beta, \alpha}(f,x) \in \TSym^{\alpha + 1_\Phi}(\omega_{A/F}) \otimes_F \TSym^{\beta}(\omega_{A^\vee/F}).
\]
\end{definition}

In the following Proposition we analyse the behaviour of the classes $\EK_{A,\Gamma}^{\beta, \alpha}(f,x)$ under conjugation by elements in the absolute Galois group $G_\QQ$.

\begin{proposition}\label{SignLemma}
Assume the notations of Definition \ref{final_ek_class_def}. For any $\sigma \in G_\QQ$, one has
\[
\EK_{A,\Gamma}^{\beta, \alpha}(f, x)^\sigma = \varepsilon_\Phi(\sigma) \EK_{A^\sigma, \Gamma}^{\sigma\beta, \sigma\alpha}(f^\sigma, x^\sigma).
\]
\end{proposition}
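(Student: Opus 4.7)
The construction of $\EK^{\beta,\alpha}_{A,\Gamma}(f,x)$ in Definition \ref{final_ek_class_def} proceeds in three stages: forming the underlying cohomology class $\EK^{b,a}_\Gamma(f,x)$, projecting to the $(\alpha,\beta)$-component via the CM splitting of the Hodge filtration, and applying the canonical map of Proposition \ref{fundamental_class_prop}. The plan is to track the action of $\sigma$ through each stage. The first two stages turn out to be Galois-compatible without sign, so the factor $\varepsilon_\Phi(\sigma)$ must arise entirely from the last.

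For the first two stages: by Proposition \ref{prop_base_change} applied to the morphism $\Spec F^\sigma \to \Spec F$ induced by $\sigma$, we have $\EK^{b,a}_\Gamma(f,x)^\sigma = \EK^{b,a}_\Gamma(f^\sigma, x^\sigma)$ in the corresponding cohomology group for $A^\sigma$. Since $A^\sigma$ has CM-type $\sigma\Phi$, the decomposition of $\sH_{A^\sigma}$ according to the embeddings of $L$ is obtained from that of $\sH_A$ via $\tau \mapsto \sigma\tau$. Hence $\sigma$ carries the Hodge splitting of $A$ to that of $A^\sigma$ and maps the $(\alpha,\beta)$-component of $\TSym^a(\omega_{A/F}) \otimes \TSym^b(\omega_{A^\vee/F}) \otimes \omega_{A/F}^g$ to the $(\sigma\alpha,\sigma\beta)$-component of the analogous space for $A^\sigma$, with no sign.

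For the third stage, we compare the canonical isomorphisms $\can_A$ and $\can_{A^\sigma}$ of Proposition \ref{fundamental_class_prop}. Fix an ordering $\tau_1 < \cdots < \tau_g$ of $\Phi$ and use the transported ordering $\sigma\tau_1 < \cdots < \sigma\tau_g$ of $\sigma\Phi$. With these choices, the isomorphism (\ref{alternating_equals_symmetric_ordering}), namely $\omega_{A/F}^g \cong \TSym^{1_\Phi}(\omega_{A/F})$, visibly commutes with $\sigma$: both sides send a wedge of basis forms ordered according to $\Phi$ to the corresponding tensor, with no sign. The entire sign must therefore arise in the comparison of the topological fundamental classes in $H^{g-1}(\Gamma,\ZZ)$ determined by the orientations of $\Gamma \bs L^1_{\RR,\Phi}$ and $\Gamma \bs L^1_{\RR,\sigma\Phi}$. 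The key observation is that $|\tau(x)| = |c\tau(x)|$, so the absolute-value map $L_\RR^\times \to \RR_{>0}^\Phi$ has kernel equal to the intrinsic maximal compact subgroup of $L_\RR^\times$ and factors through the canonical quotient $\RR_{>0}^{\langle c\rangle \bs J_L}$. It follows that both $L^1_{\RR,\Phi}$ and $L^1_{\RR,\sigma\Phi}$ are canonically identified, as $\Gamma$-spaces, with a common intrinsic quotient; under this identification the two chosen orderings of $\Phi$ and $\sigma\Phi$ induce orientations on this intrinsic object that differ by the sign of the permutation of $\langle c\rangle \bs J_L$ featured in Definition \ref{def_sign_epsilon}, which is precisely $\varepsilon_\Phi(\sigma)$. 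The resulting generators of $H^{g-1}(\Gamma,\ZZ)$ thus differ by $\varepsilon_\Phi(\sigma)$, and combining this with the Galois-compatibility of the first two stages produces the claimed identity. The main obstacle is to carry out this orientation comparison carefully, verifying that the $\Gamma$-actions on $L^1_{\RR,\Phi}$ and $L^1_{\RR,\sigma\Phi}$ both descend from a single intrinsic $\Gamma$-action after passing to the common quotient, and that the discrepancy in induced orientations is exactly $\varepsilon_\Phi(\sigma)$.
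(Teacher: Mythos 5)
Your proof is correct, and its skeleton is the same as the paper's: base‑change compatibility (Proposition \ref{prop_base_change}) for the underlying class, $L$‑linearity for the passage to the $(\alpha,\beta)$‑eigencomponent, and then a careful comparison of the two canonical maps $\can_A$ and $\can_{A^\sigma}$ from Proposition \ref{fundamental_class_prop}, exploiting that each is independent of the chosen ordering. The one genuine difference is where you localize the sign. You order $\sigma\Phi$ by transporting a chosen ordering of $\Phi$ along $\sigma$; with that convention the comparison $\omega_{A/F}^g \cong \TSym^{1_\Phi}(\omega_{A/F})$ commutes with $\sigma$ on the nose, and $\varepsilon_\Phi(\sigma)$ emerges as the discrepancy between the two orientations of the common quotient of $L^1_{\RR,\Phi}$ and $L^1_{\RR,\sigma\Phi}$ (your ``intrinsic'' space indexed by $\langle c\rangle\bs J_L$, which is exactly the target of the paper's explicit $\Gamma$‑equivariant identification). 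The paper instead fixes one ordering of $\langle c\rangle\bs J_L$ and uses the induced orderings on both $\Phi$ and $\sigma\Phi$; then the topological fundamental classes match with no sign, and $\varepsilon_\Phi(\sigma)$ appears in the square comparing $\omega_{A/F}^g \to \TSym^{1_\Phi}(\omega_{A/F})$ with its conjugate, as the sign of the permutation needed to reorder the wedge. Since $\can$ is ordering‑independent, the two bookkeeping conventions are equivalent and both correctly identify the sign with the one in Definition \ref{def_sign_epsilon}; your version isolates the sign in a purely topological statement about orientations, while the paper's keeps the topology sign‑free at the cost of a reordering of the exterior power.
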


\begin{proof}
Let $a = d(\alpha)$ and $b = d(\beta)$. The $(\alpha, \beta)$-component of $\EK^{b,a}_\Gamma(f,x)$ lies in 
\[
H^{g-1}(\Gamma, \Sym^{\alpha}(\omega_{A/F}) \otimes_F \Sym^\beta(\omega_{A^\vee/F}) \otimes \omega_{A/F}^g).
\] 
By Proposition \ref{prop_base_change}, we have
$\EK^{b,a}_{A,\Gamma}(f,x)^\sigma = \EK^{b,a}_{A^\sigma,\Gamma}(f^\sigma,x^\sigma)$.
Under conjugation by $\sigma$, the projection to the $(\alpha,\beta)$-component (\ref{alpha_beta_component}) is carried to the $(\sigma\alpha, \sigma\beta)$-component. It remains to understand the behaviour of the map in Proposition \ref{fundamental_class_prop} under conjugation. By construction,
we may assume that $\Gamma \subset \cO_L^\times$ is free of finite index and we have to show that the diagram
\[
\begin{tikzcd}
H^{g-1}(\Gamma, \ZZ) \otimes \omega_{A/F}^g \arrow{r}{\can_A} \ar{d}[swap]{\id \otimes \sigma} & \Sym^{1_\Phi}(\omega_{A/F}) \arrow{d}{\sigma} \\
H^{g-1}(\Gamma, \ZZ) \otimes \omega_{A^\sigma/F}^g \arrow{r}{\can_{A^\sigma}} & \Sym^{1_{\sigma\Phi}}(\omega_{A^\sigma/F})
\end{tikzcd}
\]
commutes up to the sign $\varepsilon_\Phi(\sigma)$. Choose an ordering of the set of complex places $\langle c \rangle \backslash J_L$. This induces an ordering of 
every CM-type $\Phi'$ of $L$, in particular on $\Phi$ and $\sigma \Phi$. We get induced orientations on 
$L^1_{\RR, \Phi}$ and $L^1_{\RR, \sigma\Phi}$. The canonical map
$L^1_{\RR,\Phi} \to L^1_{\RR, \sigma\Phi}$
which maps $(r_\tau)_{\tau \in \Phi}$ to the family given by
\[
\tau' \mapsto \begin{cases} r_{\tau'} &\textrm{ if } \tau' \in \tau\Phi \cap \Phi \\ r_{\ol{\tau}'} &\textrm{ if } \tau' \in \tau\Phi \cap \ol{\Phi} \end{cases} 
\]
for $\tau \in \sigma\Phi$ is a $\Gamma$-equivariant isomorphism. Using the chosen orderings on $\Phi$ and $\sigma\Phi$, this isomorphism identifies with the identity map on $(\RR_{>0}^g)^{N=1}$, and in
particular, the two induced isomorphisms
$H^{g-1}(\Gamma,\ZZ) \cong \ZZ$
coincide.
On the other hand, we observe that with respect to our orderings the diagram
\[
\begin{tikzcd}
\omega_{A/F}^g \arrow{r} \arrow{d}[swap]{\sigma} & \Sym^{1_\Phi}(\omega_{A/F}) \arrow{d}{\sigma} \\
\omega_{A^\sigma/F}^g \arrow{r} & \Sym^{1_{\sigma\Phi}}(\omega_{A^\sigma/F})
\end{tikzcd}
\]
commutes up to the sign $\varepsilon_\Phi(\sigma)$. 
This proves the claim.
\end{proof}

\subsection{Eisenstein-Kronecker series}
Let us first introduce some notations that will be used in the next sections:
\begin{notation}\label{zNotation}
For an element $z = (z(\sigma))_{\sigma \in J_L} \in \CC^{J_L}$ we write 
$\ol{z} \coloneqq (\ol{z(\ol{\sigma})})_{\sigma \in J_L}$.
For any $\alpha \in I^+_L$, we write 
$z^\alpha \coloneqq \prod_{\sigma \in J_L} z(\sigma)^{\alpha(\sigma)}$, cf.\ Notation \ref{OmegaNotation}. 
Once we have fixed a CM-type $\Phi$ of $L$ and are given an element $z = (z(\sigma))_{\sigma \in \Phi} \in \CC^{\Phi}$, we write 
$Nz \coloneqq \prod_{\sigma \in \Phi} |z(\sigma)|^2$.
\end{notation}

Let $\omega(A) \in \omega_{A/\CC}$ be a fixed $L \otimes \CC$-generator. This induces an $L \otimes \CC$-linear isomorphism $\Lie(A/\CC) \cong \CC^\Phi$, where $L$ acts diagonally via the embeddings of $\Phi$ on $\CC^\Phi$, and via the exponential sequence
\[
0 \to H_1(A(\CC),\ZZ) \to \Lie(A/\CC) \xto{\exp} A(\CC) \to 0,
\]
the choice of $\omega(A)$ gives rise to a complex uniformization
\[
\theta: \CC^\Phi / \Lambda \xto{\sim} A(\CC).
\]

\begin{definition}[\cite{kingssprang}, Definition 3.24]
Let $\Gamma \subset \cO_L^\times$ be a finite index subgroup and let $\beta - \alpha \in \Crit_L(\Gamma)$ be a Hecke character type with respect to $\Gamma$ which is critical with respect to $\Phi$. For any element $t \in \Lambda \otimes \QQ$ and $s \in \CC$ with $\Re(s) > \frac{d(\beta)-d(\alpha)}{2} + g$ define
\[
E^{\beta,\alpha}(t, s; \Lambda, \Gamma) \coloneqq \sideset{}{'}\sum_{\lambda \in \Gamma \bs (\Lambda + \Gamma t)} \frac{\ol{\lambda}^\beta}{\lambda^\alpha N(\lambda)^s}.
\]
Here the summation ranges over all non-zero $\Gamma$-cosets of $\Lambda + \Gamma t$. 
\end{definition}

For any integral ideal $\frc \subset \cO_L$, the complex uniformization induces an isomorphism
\[
A[\frc](\CC) \cong \frc^{-1} \Lambda / \Lambda.
\]
Via the Hodge decomposition $H_1(A(\CC),\CC) \cong \omega_{A^\vee/\CC} \oplus \ol{\omega_{A^\vee/\CC}}$, the period pairing $H^1_\dR(A/\CC) \times H_1(A(\CC),\CC) \to L \otimes \CC$ induces a pairing
\[
\langle \cdot, \cdot \rangle : \ol{\omega_{A/\CC}} \times \omega_{A^\vee/\CC} \to \CC^{\ol{\Phi}}.
\]
If $\varphi: A \to B$ is an isogeny, one easily shows that $\varphi^\ast$ and $\varphi_{\#}$ are adjoint to each other with respect to this pairing.

The following result, which expresses the classes defined in Definition \ref{final_ek_class_def} in analytic terms, is the key to relating these classes to special values of Hecke $L$-functions. We first need to introduce some notation: For any $\omega \in \omega_{A/F}$ and $\alpha \in I_L^+$, we write
\[
\omega^{[\alpha]} = \bigotimes_{\sigma \in J_L} \omega(\sigma)^{[\alpha(\sigma)]},
\]
if $\omega = (\omega(\sigma))_{\sigma \in J_L}$ is the decomposition according to $\omega_{A/F} = \bigoplus_{\sigma \in \Phi} \omega_{A/F}(\sigma)$.

\begin{theorem}[\cite{kingssprang}, Corollary 3.28]\label{analytic_ek_classes}
Let $\beta - \alpha \in \Crit_L(\Gamma)$. Let $\frf$ and $\frc$ be coprime integral ideals in $\cO_L$. Fix $L \otimes \CC$-generators $\omega(A)$ and $\omega(A^\vee)$ of $\omega_{A/\CC}$ and $\omega_{A^\vee/\CC}$. Let $\Lambda$ be the period lattice associated to $\omega(A)$. Let $x \in \frf^{-1} \Lambda/\Lambda$ and let $f: \frc^{-1} \Lambda/\Lambda \to \CC$ be a $\Gamma$-invariant function such that $\sum_{t \in \frc^{-1}\Lambda/\Lambda} f(t) = 0$. 
Then there is an equality
\[
\EK^{\beta, \alpha-1_\Phi}_{A,\Gamma}(f,x) =   \frac{(-1)^{\frac{g(g-1)}{2}}(\alpha-1)!}{\langle \ol{\omega(A)}, \omega(A^\vee)\rangle^\beta_A} \sum_{t \in \Gamma \bs (\frc^{-1} \Lambda / \Lambda)} f(-t) E^{\beta, \alpha}(t + x, 0; \Lambda, \Gamma) \cdot \omega(A)^{[\alpha]} \otimes \omega(A^\vee)^{[\beta]}
\]
in $\TSym^{\alpha}(\omega_{A/\CC}) \otimes \TSym^\beta(\omega_{A^\vee/\CC})$. 
\end{theorem}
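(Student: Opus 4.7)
The statement is essentially Corollary 3.28 of \cite{kingssprang}, and I would follow the strategy of loc.\ cit.\ while tracking the indexing conventions of this paper. After passing to the complex-analytic category, the chosen $L \otimes \CC$-generator $\omega(A)$ uniformizes $A(\CC) = \CC^\Phi/\Lambda$, and $\omega(A^\vee)$ trivializes $\omega_{A^\vee/\CC}$. The key analytic object is the Kronecker theta function $\Theta(z,w)$ on $\CC^\Phi \times \CC^\Phi$: a meromorphic section of the Poincar\'e bundle with a single simple pole along the unit section, whose higher derivatives in $w$ encode the universal integrable connection $\nabla$ on $\widehat{\sP}^\natural$. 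The first task is to identify $\EK^\natural_\Gamma(f)$ analytically with the $\Gamma$-equivariant cohomology class obtained by averaging translates of $\Theta$ against $f$ on $\frc$-torsion, via the injection of Theorem \ref{EK_map}.

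Next, I would compute $\nabla^a \EK^\natural_\Gamma(f)$ by iterated differentiation of $\Theta$ in $w$, pull back along the torsion section $x$, and apply the moment map ${}_\rho \mom^b_x$ followed by the Hodge projection \eqref{hodge_projection} onto $\TSym^\beta(\omega_{A^\vee/\CC})$. Each application of the moment map contributes a factor of the period pairing $\langle \ol{\omega(A)}, \omega(A^\vee)\rangle_A$, producing the overall denominator $\langle \ol{\omega(A)}, \omega(A^\vee)\rangle_A^\beta$. Finally, the canonical map of Proposition \ref{fundamental_class_prop} amounts to evaluating the resulting $(g-1)$-cocycle against the topological fundamental class of $\Gamma \bs L^1_{\RR,\Phi}$; analytically this becomes the summation over $\Gamma \bs (\frc^{-1} \Lambda / \Lambda)$, and expanding $\Theta$ in Taylor series around each torsion point $t + x$ yields precisely the Eisenstein-Kronecker series $E^{\beta,\alpha}(t + x, 0; \Lambda, \Gamma)$ on the right-hand side.

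The main obstacle is the careful bookkeeping of normalization constants and signs: one must verify that the analytic representative provided by $\Theta$ matches the algebraic $\EK^\natural_\Gamma(f)$ with the correct normalization, that the factorial $(\alpha-1)! = \prod_\sigma (\alpha(\sigma)-1)!$ arising from the Taylor coefficients is consistent with our convention identifying $\TSym$ and $\Sym$, and that the sign $(-1)^{g(g-1)/2}$ coming from the chosen orientation of $L^1_{\RR,\Phi}$ in the proof of Proposition \ref{fundamental_class_prop} combines correctly with the other factors. All of these verifications are the content of Section 3 of \cite{kingssprang} and transfer to the present setting without essential modification, since the only novelty here lies in the Galois-equivariant refinement recorded in Proposition \ref{SignLemma}, which does not interact with the purely analytic computation.
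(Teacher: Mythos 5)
The paper offers no proof of this statement: it is imported verbatim (up to notational adjustments) from Kings--Sprang, Corollary 3.28, so your proposal --- essentially an outline of the argument of loc.\ cit.\ --- is consistent with how the paper treats it. Two caveats on your outline, though. First, for $g>1$ the unit section has codimension $g$, so there is no meromorphic section of $\sP$ with a \emph{simple pole along the unit section}; the analytic representatives in Kings--Sprang are local-cohomology classes in degree $g$ (equivalently, $(0,g-1)$-Dolbeault representatives built from Kronecker--Lerch series), which is exactly why the class lives in $H^{g-1}(U_D,\Gamma,\widehat{\sP}\otimes\Omega^g_{A/S})$ rather than in $H^0$; your ``average translates of $\Theta$'' picture is the $g=1$ special case. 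Second, the series $E^{\beta,\alpha}(t+x,s;\Lambda,\Gamma)$ converges only for $\Re(s)$ large, so the evaluation at $s=0$ on the right-hand side rests on the analytic continuation established in loc.\ cit., not merely on a Taylor expansion of a theta kernel; this is a substantive ingredient rather than bookkeeping.
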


Finally, let us recall the relation between Eisenstein-Kronecker series and Hecke $L$-functions. Let $L$ be a totally imaginary number field and $\Phi$ a CM-type of $L$. 
Let $\chi: L_f^\times \to T^\times$ be an algebraic Hecke character whose infinity-type $\chi_a = \beta- \alpha$ is critical with respect to $\Phi$. 
Let $\frf \neq \cO_L$ be an integral ideal such that $\chi$ can be regarded as a having conductor dividing $\frf$. In the following, we let $\cO_f^\times$ denote the finite index subgroup of $\cO_L^\times$ consisting of units which are congruent to $1$ modulo $\frf$. We will from now on always consider $\Gamma = \cO_\frf^\times$ and drop it from the notation.
Fix an embedding $\iota \in J_T$ and set $\chi_\iota \coloneqq \iota \circ \chi$. Then, for every integral ideal $\frb$ coprime to $\frf$ with ray class $[\frb] \in I^\frf/P^\frf$, there is a bijection
\[
\cO_\frf^\times \bs (1 + \frf \frb^{-1}) \xto{\sim} \{ \fra \in [\frb] \mid \fra \textrm{ integral }\}, \;\;\; y \mapsto y \frb.
\]
Using this bijection, it follows that (cf.\ \cite{kingssprang}, Lemma 4.2)
\begin{align*}
L_\frf(\chi_\iota,s,[\frb]) 
&= \chi_\iota(\frb) N\frb^{-s} E^{\iota\beta,\iota\alpha}(1,s;\frf \frb^{-1}, \cO_\frf^\times).
\end{align*}
and evaluating at $s = 0$ gives
\begin{equation}\label{ek_classes_and_l_values_equation}
L_\frf(\chi_\iota,0,[\frb]) = \chi_\iota(\frb) E^{\iota\beta,\iota\alpha}(1,0;\frf \frb^{-1}).
\end{equation}

\subsection{Eisenstein-Kronecker classes and special values of Hecke $L$-functions}\label{section_ek_classes_and_special_values}
Recall the following periods of the abelian variety $A$ and its dual $A^\vee$:

\begin{definition}\label{def_omega_periods}
Let $A/F$ be of CM-type $(L,\Phi)$. In the following, we identify $L \otimes \CC \cong \CC^{J_L}$. Let $\omega(A) \in \omega_{A/F}$ and $\omega(A^\vee) \in \omega_{A^\vee/F}$ denote fixed $L \otimes F$-generators. Moreover, let $\gamma$ be an $L$-basis of $H^1(A(\CC),\QQ)$ and let $\gamma^\vee$ denote its dual basis with respect to the pairing $H^1(A(\CC),\QQ) \otimes_L H^1(A^\vee(\CC),\QQ) \to (2 \pi i)^{-1} L$. In this situation, we denote by $\Omega \in \CC^{\Phi}$ and $\Omega^\vee \in \CC^{\ol{\Phi}}$ the unique elements such that
\[
I_{\infty,A}(\omega(A)) = \Omega \cdot \gamma \;\;\; \textrm{ and } \;\;\; I_{\infty,A^\vee}(\omega(A^\vee)) = \Omega^\vee \cdot \gamma^\vee.
\]
\end{definition}

\begin{remark}
Note that the period $\Omega$ defined above coincides with period defined in (\ref{omega_period}) for the motive $M = h^1(A)$ and by regarding $\omega(A)$ in $h^1_\dR(A)$ via the Hodge filtration $\omega_{A/F} \subset h^1_\dR(A)$. Similarly, $\Omega^\vee$ is associated to the motive $h^1(A^\vee)$.
\end{remark}

Let $E$ denote the reflex field of $(L,\Phi)$. Let $F/E$ be a finite extension such that $F$ contains the Galois closure of $L$ and let $A/F$ be an abelian variety of CM-type $(L,\Phi)$ whose CM-order is $\cO_L$ and we assume that all $[\frf]$-torsion points are defined over $F$. Moreover, we assume that $H_1(A(\CC),\ZZ)$ is a free $\cO_L$-module and we fix an $\cO_L$-linear isomorphism
\begin{equation}\label{ol_structure}
\xi : \cO_L \xrightarrow{\sim} H_1(A(\CC),\ZZ).
\end{equation}
Note that this condition can always be achieved by applying Serre's tensor construction.

Let $\gamma \in h^1_B(A)$ denote the dual of the $L$-basis $\xi(1) \in H_1(A(\CC),\QQ)$. Let $\gamma^\vee \in h^1_B(A^\vee)$ be the dual basis of $\gamma$ with respect to the canonical pairing $h^1_B(A) \otimes_L h^1_B(A^\vee) \to (2\pi i)^{-1} L$.
Let $\omega(A)$ and $\omega(A^\vee)$ denote $L \otimes F$-generators of $\omega_{A/F}$ and $\omega_{A^\vee/F}$ (cf.\ \cite{kingssprang}, Definition 1.15). Let $\Omega = \langle \omega(A),\gamma \rangle \in \CC^{\Phi}$ and $\Omega^\vee = \langle \omega(A^\vee), \gamma^\vee \rangle \in \CC^{\ol{\Phi}}$ denote the periods defined Definition \ref{def_omega_periods}.
For any integral ideal $\frb$ which is coprime to $\frf$,
the abelian variety $A(\frf \frb^{-1}) \coloneqq \frf \frb^{-1} \otimes_{\cO_L} A$ still has complex mulitplication by $\cO_L$. By \cite{kingssprang}, Proposition 4.7, the differential form 
$\frf \frb^{-1} \otimes \omega(A) \coloneqq [\frf]^\ast ([\frb]^\ast)^{-1} \omega(A)$
on $A(\frf \frb^{-1})$ generates $\omega_{A(\frf \frb^{-1})/F}$ and induces a complex uniformization of $A(\frf \frb^{-1})$ such that the $\frf$-torsion points are given by
\[
A(\frf \frb^{-1})(\CC)[\frf] = \frb^{-1} \Omega / \frf \frb^{-1} \Omega.
\] 
\begin{definition}\label{x_b_torsion_point}
Let $x_{\frb} \in A(\frf \frb^{-1})[\frf]$ denote the $\frf$-torsion point of $A(\frf \frb^{-1})$ which corresponds to the point
\[
\Omega + \frf \frb^{-1}\Omega  \in  A(\frf \frb^{-1})(\CC)[\frf].
\] 
By our assumptions on $A/F$, the torsion point $x_{\frb}$ is defined over $F$. Note that $x_\frb$ depends on the choice of the isomorphism (\ref{ol_structure}) but not on the choice of $\omega(A)$.
\end{definition}

To the above data we now can associated Eisenstein-Kronecker classes on $A(\frf \frb^{-1})$. We turn these into classes on $A$ in the following way: 

\begin{definition}\label{def_modified_ek_class}
For critical $\beta - \alpha \in I_L$, we define
\[
\tEK_{A}^{\beta, \alpha-1_\Phi}(f, x_\frb) \in \Sym^{\alpha}(\omega_{A/F}) \otimes_F \Sym^{\beta}(\omega_{A^\vee/F})
\]
to be the image of $\EK_{A(\frf \frb^{-1})}^{\beta, \alpha-1_\Phi}(f, x_\frb)$ under the isomorphism
$\TSym^{\alpha}([\frb]^\ast ([\frf]^\ast)^{-1} ) \TSym^{\beta}([\frb]_\#^{-1} [\frf]_\#)$
followed by the canonical identification
$\TSym^{\alpha}(\omega_{A/F}) \otimes_F \TSym^{\beta}(\omega_{A^\vee/F}) \cong \Sym^{\alpha}(\omega_{A/F}) \otimes_F \Sym^{\beta}(\omega_{A^\vee/F})$ via (\ref{TSymSym}).
\end{definition}

While the notation is slightly ambiguous, we hope that it will always be clear from the context on which abelian variety the element $f$ and the torsion point $x_\frb$ are considered.
We note that Proposition \ref{SignLemma} implies that $\tEK_A^{\beta,\alpha}(f,x_\frb)^\sigma = \varepsilon_\Phi(\sigma) \cdot \tEK_{A^\sigma}^{\sigma\beta,\sigma\alpha}(f^\sigma,x^\sigma_\frb)$ via the canonical isomorphism $A(\frf \frb^{-1})^\sigma \cong A^\sigma(\frf \frb^{-1})$.

In our applications, we will make use of specific choices of elements $f \in F[D]^{0,\Gamma}$ in the definition of Eisenstein-Kronecker classes: Let $\frc \subset \cO_L$ be an integral ideal such that all $[\frc]$-torsion points of $A$ are defined over $F$. Then  $F[D] \cong \Map(A[\frc](F), F)$ and $F[D]^{0,\Gamma}$ consists of $\Gamma$-invariant maps $f: A[\frc](F) \to F$ satisfying $\sum_{t \in A[\frc](F)} f(t) = 0$.

\begin{definition}\label{f_c_element}
Let $\frc \subset \cO_L$ be an integral ideal and set $D \coloneqq \ker([\frc])$. Let $\Gamma \subset \cO_L^\times$ be a finite index subgroup. Assume that all $[\frc]$-torsion points of $A$ are defined over $F$. We define the element $f_{[\frc]} \in F[D]^{0,\Gamma}$ as
\[
f_{[\frc]} \coloneqq N\frc \cdot 1_{e} - 1_{\ker{[\frc]}},
\]
where $1_X$ denotes the characteristic function of a $\Gamma$-invariant subset $X \subset \ker([\frc])$ in $F[D]^{\Gamma}$.
\end{definition}

The following theorem establishes the connection between Eisenstein-Kronecker classes and special values of Hecke $L$-functions. It is essentially a reformulation of the results in \cite{kingssprang}, 4.4 in our setup. 
Recall that we have fixed an isomorphism $\xi$ as in (\ref{ol_structure}), which determined elements $\gamma \in h^1_B(A)$ and $h^1_B(A^\vee)$. 
Now let $\sigma \in J_F$ and choose a fixed extension in $G_\QQ$, which we also denote by $\sigma$. Similarly, we fix the following data corresponding to the abelian variety $A^\sigma$: 
\begin{itemize} 
\item Let $\gamma_\sigma \in h^1_B(A^\sigma)$ be an $L$-basis with dual $\gamma_\sigma^\vee \in h^1_B(A^{\sigma,\vee})$. 
\item Let $\lambda_\sigma \in L_f^\times$ be the unique idele such that $I_f(\gamma)^\sigma = \lambda_\sigma \cdot I_f(\gamma_{\sigma})$.
\item Let $\ell_\sigma \in L^\times$ such that
$\ell_{\sigma,v} \lambda_{\sigma,v} \equiv 1 \mod \frf_v$ for every finite place $v$ of $L$. The existence of $\ell_\sigma$ is guaranteed by the primary decomposition of $L/\frf$, cf.\ the discussion after Theorem \ref{main_theorem_uniformizations}. Note that then $\ell_{\sigma,v} \lambda_{\sigma,v} \equiv 1 \mod (\frf \frb^{-1})_v$ for every integral ideal $\frb$ coprime to $\frf$.
\item Let $\omega(A^\sigma)$ and $\omega(A^{\vee,\sigma})$ be $L \otimes F$-generators of $\omega_{A^\sigma/F}$ and $\omega_{A^{\sigma,\vee}/F}$, respectively.
\item Let $\Omega_\sigma \in \CC^{\sigma\Phi}$ and $\Omega_\sigma^\vee \in \CC^{\ol{\sigma\Phi}}$ denote the periods associated to $\gamma_\sigma$, $\gamma_\sigma^\vee$ and $\omega(A^\sigma)$, $\omega(A^{\sigma,\vee})$.
\end{itemize} 

\begin{theorem}\label{LValueProposition}
Let $\frf \neq \cO_L$, $\frb$ and $\frc$ be pairwise coprime integral ideals of $L$. Let $A/F$ be as above and moreover assume that all $[\frf]$- and $[\frc]$-torsion points of $A$ are defined over $F$.
Let $\chi$ an algebraic Hecke character of conductor dividing $\frf$ whose infinity-type $\beta - \alpha$ is critical with respect to $\Phi$. Let $x_\frb \in A(\frf \frb^{-1})[\frf](F)$ be the torsion point defined in Definition \ref{x_b_torsion_point}. Let $D = A(\frf \frb^{-1})[\frc]$ and let $f_{[\frc]} \in F[D]^{0,\Gamma}$ be defined as in Definition \ref{f_c_element}.
Let $\iota \in J_{T,\sigma|_E}$ and assume that we are given the data as listed above.
Then
\[
\begin{split}
&\chi_\iota(\frb) \cdot \tEK_{A^\sigma}^{\iota \beta, \iota \alpha - 1_{\sigma\Phi}}(f_{[\frc]}, x_\frb^\sigma) = \\
(-1)^{\frac{g(g-1)}{2}} \frac{(\alpha-1)! (2\pi i)^{d(\beta)}}{\Omega_\sigma^{\iota \alpha} (\Omega_\sigma^\vee)^{\iota \beta}} &\chi_\iota(\lambda_\sigma)^{-1}  
\left( N\frc L_\frf(\chi_\iota, 0, [\ell_\sigma \lambda_\sigma \frb]) - \chi_\iota(\frc)^{-1} L_\frf(\chi_\iota, 0, [\ell_\sigma \lambda_\sigma \frb \frc]) \right) \cdot \\
&\omega(A^\sigma)^{\iota \alpha} \otimes \omega(A^{\vee,\sigma})^{\iota \beta}.
\end{split}
\]
In particular, the element on the left hand side only depends on the class $[\frb] \in I^\frf/P^\frf$.
\end{theorem}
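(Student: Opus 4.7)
The plan is to compute the left-hand side directly by applying Theorem \ref{analytic_ek_classes} to the CM abelian variety $A^\sigma$ and then converting the resulting Eisenstein-Kronecker series into the partial Hecke $L$-values on the right-hand side via the identity (\ref{ek_classes_and_l_values_equation}). The hypothesis $\iota \in J_{T,\sigma|_E}$ ensures $\iota\Phi = \sigma\Phi$, so $\iota\beta - \iota\alpha$ is critical with respect to the CM-type of $A^\sigma$ and Definition \ref{final_ek_class_def} applies directly to the conjugate motive.

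First I would identify the torsion point $x_\frb^\sigma$. The relation $I_f(\gamma)^\sigma = \lambda_\sigma \cdot I_f(\gamma_\sigma)$ together with Theorem \ref{main_theorem_of_cm} yields $\lambda_\sigma \in L(\Phi, \sigma)$, and Theorem \ref{main_theorem_uniformizations} then identifies $x_\frb^\sigma$, in the natural complex uniformization of $A^\sigma(\frf\frb^{-1})$ (with period lattice $[\lambda_\sigma]_L^{-1}\frf\frb^{-1}$), with the class of $\lambda_\sigma^{-1}$, which by the defining congruence of $\ell_\sigma$ equals the class of $\ell_\sigma$. Applying Theorem \ref{analytic_ek_classes} with the differential $\frf\frb^{-1}\otimes\omega(A^\sigma)$ and the function $f_{[\frc]}$ from Definition \ref{f_c_element} then expresses $\tEK_{A^\sigma}^{\iota\beta,\iota\alpha - 1_{\sigma\Phi}}(f_{[\frc]}, x^\sigma_\frb)$ as a weighted sum over $\frc$-torsion $t$ of Eisenstein-Kronecker series, with prefactor $(-1)^{g(g-1)/2}(\alpha-1)!/\langle\ol{\omega(A^\sigma)}, \omega(A^{\vee,\sigma})\rangle^{\iota\beta}$ and differential $\omega(A^\sigma)^{[\iota\alpha]}\otimes\omega(A^{\vee,\sigma})^{[\iota\beta]}$.

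Using the identity $\sum_{t\in\ker[\frc]}E^{\iota\beta,\iota\alpha}(t+x, 0; \Lambda) = E^{\iota\beta,\iota\alpha}(x, 0; \frc^{-1}\Lambda)$, the two summands $N\frc\cdot 1_e$ and $-1_{\ker[\frc]}$ of $f_{[\frc]}$ reduce to Eisenstein-Kronecker series whose lattices differ by a $\frc^{-1}$-factor. Applying the scaling homogeneity $E^{\iota\beta,\iota\alpha}(cs, 0; c\Lambda) = \ol{c}^{\iota\beta}c^{-\iota\alpha}E^{\iota\beta,\iota\alpha}(s, 0; \Lambda)$ --- first with $c = \Omega_\sigma$ to pass from the $\omega(A^\sigma)$-uniformization to the natural one (contributing $\ol{\Omega_\sigma}^{\iota\beta}\Omega_\sigma^{-\iota\alpha}$) and then with $c = \ell_\sigma \in L^\times$ (which, being fixed by the conjugation on $L\otimes\CC$, contributes $\ell_\sigma^{\iota\beta-\iota\alpha}$) --- reduces each contribution to $E^{\iota\beta,\iota\alpha}(1, 0; \frf(\ell_\sigma\lambda_\sigma\frb')^{-1}, \cO_\frf^\times)$ for $\frb' \in \{\frb, \frb\frc\}$. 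Identity (\ref{ek_classes_and_l_values_equation}) then converts this into $\chi_\iota(\ell_\sigma\lambda_\sigma\frb')^{-1}L_\frf(\chi_\iota, 0, [\ell_\sigma\lambda_\sigma\frb'])$. The relation $\chi_\iota(\ell_\sigma) = \ell_\sigma^{\iota\beta-\iota\alpha}$ (following from $\chi|_{L^\times}(\ell) = \ell^{\beta-\alpha}$) cancels the scaling factor $\ell_\sigma^{\iota\beta-\iota\alpha}$ against the $\chi_\iota(\ell_\sigma)^{-1}$-component of $\chi_\iota(\ell_\sigma\lambda_\sigma\frb')^{-1}$; the remaining $\chi_\iota(\lambda_\sigma)^{-1}\chi_\iota(\frb')^{-1}$ combines with the $\chi_\iota(\frb)$-multiplication on both sides of the theorem to produce the claimed combination $\chi_\iota(\lambda_\sigma)^{-1}[N\frc L_\frf(\chi_\iota, 0, [\ell_\sigma\lambda_\sigma\frb]) - \chi_\iota(\frc)^{-1}L_\frf(\chi_\iota, 0, [\ell_\sigma\lambda_\sigma\frb\frc])]$.

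The period factor $(2\pi i)^{d(\beta)}/(\Omega_\sigma^{\iota\alpha}(\Omega_\sigma^\vee)^{\iota\beta})$ emerges by combining the analytic prefactor with the $\ol{\Omega_\sigma}^{\iota\beta}\Omega_\sigma^{-\iota\alpha}$-scaling: using the $(2\pi i)^{-1}L$-normalization of the canonical Betti pairing $h^1_B(A^\sigma)\otimes_L h^1_B(A^{\vee,\sigma}) \to (2\pi i)^{-1}L$, one computes $\langle\ol{\omega(A^\sigma)},\omega(A^{\vee,\sigma})\rangle^{\iota\beta} = \ol{\Omega_\sigma}^{\iota\beta}(\Omega_\sigma^\vee)^{\iota\beta}/(2\pi i)^{d(\beta)}$, and the $\ol{\Omega_\sigma}^{\iota\beta}$-factors cancel. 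The convention identifying $\TSym^n$ with $\Sym^n$ via the projection map (from the paper's notational preliminaries) identifies $\omega(A^\sigma)^{[\iota\alpha]}\otimes\omega(A^{\vee,\sigma})^{[\iota\beta]}$ with $\omega(A^\sigma)^{\iota\alpha}\otimes\omega(A^{\vee,\sigma})^{\iota\beta}$ without introducing extra factorials. The main obstacle is this careful bookkeeping of rescaling factors through the two uniformizations of $A^\sigma(\frf\frb^{-1})$ and the successive homogeneity applications. The independence of the expression on the ray class representative of $\frb$ follows immediately from the corresponding property of partial Hecke $L$-values.
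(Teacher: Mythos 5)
Your proposal is correct and follows essentially the same route as the paper's proof: identify $x_\frb^\sigma$ via the main theorem of complex multiplication, apply Theorem \ref{analytic_ek_classes} to $A^\sigma(\frf\frb^{-1})$, use the distribution relation for $f_{[\frc]}$ and the homogeneity of the Eisenstein--Kronecker series, convert to partial $L$-values via (\ref{ek_classes_and_l_values_equation}), and track the period factor through \cite{kingssprang}, Proposition 4.6. The only (immaterial) difference is that the paper absorbs the factor $\ell_\sigma^{-1}$ into its choice of differential on $A^\sigma(\frf\frb^{-1})$, whereas you handle it by a separate application of homogeneity.
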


\begin{proof}
Fix $\sigma \in G_\QQ$ and simply denote $\ell_\sigma$ and $\lambda_\sigma$ by $\ell$ and $\lambda$. By our assumptions on $A$ and choice of $\gamma$, there exists a uniformization
\[
\theta: \CC^\Phi/\cO_L \xrightarrow{\sim} A(\CC),
\]
which induces the isomorphism $L \xrightarrow{\sim} H^1(A(\CC),\QQ)$, $1 \mapsto \gamma$.
There exists a complex uniformization
\[
\theta_\sigma : \CC^{\sigma\Phi}/[\lambda]^{-1}_L \xrightarrow{\sim} A^\sigma(\CC),
\]
inducing the isomorphism $L \xrightarrow{\sim} H^1(A^\sigma(\CC),\QQ)$, $1 \mapsto \gamma_\sigma$
on rational cohomology, such that the diagram
\[
\begin{tikzcd}
L/\cO_L \arrow{r}{\theta} \arrow{d}{\lambda^{-1}} & A_\tor \arrow{d}{\sigma}  \\
L/[\lambda]^{-1}_L \arrow{r}{\theta_\sigma} & A^\sigma_\tor
\end{tikzcd}
\]
is commutative. We also obtain an induced commutative diagram
\[
\begin{tikzcd}
L/\frf \frb^{-1} \arrow{r}{\theta} \arrow{d}{\lambda^{-1}} & A(\frf \frb^{-1})_\tor \arrow{d}{\sigma}  \\
L/\frf \frb^{-1} [\lambda]^{-1}_L \arrow{r}{\theta_\sigma} & A^\sigma(\frf \frb^{-1})_\tor.
\end{tikzcd}
\]
By definition, the torsion point $x_\frb$ corresponds to the point $1 + \frf \frb^{-1} \in L/\frf \frb^{-1}$.
Thus, the point $x_\frb^\sigma$ corresponds to 
\[
\ell + \frf \frb^{-1} [\lambda]^{-1}_L \in L/\frf \frb^{-1} [\lambda]^{-1}_L.
\]
Note that, by our assumptions on $\ell$, the ideal $\ell \lambda \frb$ is an integral ideal coprime to $\frf$.
The equality (\ref{ek_classes_and_l_values_equation}) with $\ell \lambda \frb$ in place of $\frb$ gives 
\begin{align}\label{ek_series_L_value}
\chi_\iota(\ell \lambda \frb)^{-1} \cdot L_\frf(\chi_\iota, 0 , [\ell \lambda \frb]) = E^{\iota \beta, \iota \alpha}(1,0;\frf(\ell \lambda\frb)^{-1}).
\end{align}
We set
\begin{align*}
\omega(A^\sigma(\frf\frb^{-1})) \coloneqq \ell^{-1} \cdot [\frf]^\ast ([\frb]^\ast)^{-1} \omega(A^\sigma) \\
\omega(A^\sigma(\frf \frb^{-1})^\vee) \coloneqq  [\frf]^{-1}_\# [\frb]_\# \omega(A^{\sigma,\vee})
\end{align*}
and abbreviate $\fra \coloneqq \frf (\ell \lambda \frb)^{-1}$.
By the above and \cite{kingssprang}, Proposition 4.7, the period lattice of $A^\sigma(\frf \frb^{-1})$ associated with $\omega(A^\sigma(\frf \frb^{-1}))$ is then given by 
$\fra \Omega_\sigma$,
and the $\frf$-torsion point $x_\frb^\sigma$ corresponds to the point $\Omega_\sigma + \fra \Omega_\sigma$ in 
$A^\sigma(\frf\frb^{-1})(\CC)[\frf] = \frf^{-1} \fra \Omega_\sigma / \fra \Omega_\sigma$.
The assertion of the Proposition now follows, just as in the proof of \cite{kingssprang}, Theorem 4.9, from Theorem \ref{analytic_ek_classes}. For the convenience of the reader, let us recall the argument in our setting:
Applying Theorem \ref{analytic_ek_classes} to the abelian variety $A^\sigma(\frf \frb^{-1})$ and the critical infinity-type $\iota \beta - \iota \alpha$, we obtain that
\begin{align}\label{ks_corollary_in_action}
\begin{split}
\EK^{\iota \beta, \iota\alpha-1}_{A^\sigma(\frf \frb^{-1})}(f_{[\frc]}, x_\frb^\sigma) &= \frac{(-1)^{\frac{g(g-1)}{2}} (\alpha-1)!}{\langle \ol{\omega(A^\sigma(\frf \frb^{-1}))}, \omega(A^\sigma(\frf \frb^{-1})^\vee)\rangle^{\iota \beta}} \cdot \\
&\sum_{t \in \cO_\frf^\times \bs \frc^{-1} \fra \Omega_\sigma / \fra \Omega_\sigma} f_{[\frc]}(-t) E^{\iota \beta, \iota \alpha}(t+\Omega_\sigma, 0 ;\fra \Omega_\sigma) \cdot \\
&\omega(A^\sigma(\frf \frb^{-1}))^{[\iota \alpha]} \otimes \omega(A^\sigma(\frf \frb^{-1})^\vee)^{[\iota \beta]}
 \end{split}
\end{align}
in $\TSym^{\iota \alpha}(\omega_{A^\sigma(\frf \frb^{-1})/F}) \otimes \TSym^{\iota \beta}(\omega_{A^\sigma(\frf \frb^{-1})^\vee/F})$. 
By the compatibility of the pairing $\langle \cdot, \cdot \rangle_A$ with isogenies, we have
\[
\langle \ol{\omega(A^\sigma(\frf \frb^{-1}))}, \omega(A^\sigma(\frf \frb^{-1})^\vee)\rangle^{\iota \beta} = 
\ell^{-\iota \beta} \cdot \langle \ol{\omega(A^\sigma)}, \omega(A^{\sigma,\vee})\rangle^{\iota \beta}.
\]
Moreover, we have
\[
\Omega_\sigma^\vee = \frac{2\pi i \langle \ol{\omega(A^\sigma)},\omega(A^{\sigma,\vee})\rangle_A}{\ol{\Omega}_\sigma}
\]
by \cite{kingssprang}, Proposition 4.6 (note that the proof doesn't actually require the Betti element $\gamma_\sigma$ to come from an $\cO_L$-basis), so that 
\begin{align}\label{bracket_term}
\langle \ol{\omega(A^\sigma(\frf \frb^{-1}))}, \omega(A^\sigma(\frf \frb^{-1})^\vee)\rangle^{\iota \beta} = \ell^{-\iota \beta} \cdot \frac{\ol{\Omega}_\sigma^{\iota \beta} \Omega_\sigma^{\vee,\iota \beta} }{(2\pi i)^{d(\beta)}}.
\end{align}
Moreover, it follows from the definition of the Eisenstein-Kronecker series that
\begin{align}\label{ek_series_trivial_identity}
E^{\iota \beta, \iota \alpha}((t + 1) \Omega_\sigma, 0; \fra \Omega_\sigma) = \frac{\ol{\Omega}_\sigma^{\iota \beta}}{\Omega_\sigma^{\iota \alpha}} \cdot E^{\iota \beta, \iota \alpha}(t + 1,0; \fra).
\end{align}
for all $t \in \cO_\frf^\times \bs \frc^{-1} \fra / \fra$.
The distribution relation \cite{kingssprang}, Proposition 4.12 gives
\begin{align}\label{distribution_relation_in_action}
\begin{split}
\sum_{t \in \cO_\frf^\times \bs \frc^{-1} \fra/ \fra} &f_{[\frc]}(-t) E^{\iota \beta, \iota \alpha}(t+1, 0 ; \fra) = \\
&N\frc E^{\iota \beta,\iota \alpha}(1,0; \fra) - 
E^{\iota\beta,\iota\alpha}(1,0; \frc^{-1} \fra)
\end{split}
\end{align}
By putting together (\ref{ek_series_L_value}), (\ref{ks_corollary_in_action}), (\ref{bracket_term}), (\ref{ek_series_trivial_identity}) and (\ref{distribution_relation_in_action}), we get the equality
\[
\begin{split}
\EK^{\iota \beta, \iota\alpha-1}_{A^\sigma(\frf \frb^{-1})}(f_{[\frc]}, x_\frb^\sigma) &= 
(-1)^{\frac{g(g-1)}{2}}\frac{(\alpha-1)! (2\pi i)^{d(\beta)}}{\Omega_\sigma^{\iota \alpha} \Omega_\sigma^{\vee,\iota \beta}} \cdot \ell^{-\iota \beta} \chi_\iota(\ell \lambda \frb)^{-1} \cdot \\
&\left(N\frc L_\frf(\chi_\iota,0,[\ell \lambda \frb]) - \chi_\iota(\frc)^{-1} L_\frf(\chi_\iota, 0, [\ell \lambda\frb \frc])\right) \cdot \\
&\omega(A^\sigma(\frf \frb^{-1}))^{[\iota \alpha]} \otimes \omega(A^\sigma(\frf \frb^{-1})^\vee)^{[\iota \beta]}
\end{split}
\]
Recall the definition of $\omega(A^\sigma(\frf \frb^{-1}))$ and observe that 
\[
\ell^{\iota \alpha -\iota \beta} \chi_\iota(\ell \lambda \frb)^{-1} = \chi_\iota(\frb)^{-1} \chi_\iota(\lambda)^{-1}
\] 
where we freely regard $\chi_\iota$ both as a character of ideals and ideles.
Passing to $\Sym^{\iota \alpha}(\omega_{A^\sigma/F}) \otimes \Sym^{\iota \beta}(\omega_{A^{\sigma,\vee}/F})$ via the isogenies $[\frf]$ and $[\frb]$ and our identificaiton of $\TSym$ with $\Sym$ then gives the claim.
\end{proof}

\section{Deligne's conjecture for Hecke characters}\label{section_8_deligne_conj}
We use the Eisenstein-Kronecker classes from the previous section to define de Rham classes of the reflex motive $M(\Xi)$. More precisely, we attach to any critical algebraic Hecke characters some normalized class $\cEK$. We reinterpret Theorem \ref{LValueProposition} in this set-up and together with Theorem \ref{a_eta_elements_transformation} and Blasius' period relation, we deduce Deligne's conjecture for arbitrary critical algebraic Hecke characters.

Except for the statement of Theorem \ref{deligne_conjecture}, we fix the following set-up throughout this section:
Let $L$ be a totally imaginary number field and let $\Phi$ be a CM-type of $L$.
Let $\chi: L_f^\times \to T^\times$ be an algebraic Hecke character whose infinity-type $\chi_a = \beta - \alpha$ is critical with respect to $\Phi$ and assume $T \supset L^\alpha$. Fix an integral ideal $\frf \neq \cO_L$ such that $\chi$ is of conductor dividing $\frf$.
Note that $\chi \neq N^{-1}$ as $\chi$ is critical. Fix an integral ideal $\frc$ which is coprime to $\frf$ and satisfies $\chi(\frc) \neq N\frc^{-1}$, so that $N\frc - \chi(\frc)^{-1} \in T^\times$. 
Let $E$ denote the reflex field of $(L,\Phi)$. Let $F/E$ be a finite Galois extension containing the Galois closures of $L$ and $T$ and let $A/F$ be an abelian variety of CM-type $(L,\Phi)$ such that the $[\frf]$- and $[\frc]$-torsion points of $A$ are defined over $F$. Moreover, assume that the $\cO_L$-module $H_1(A(\CC),\ZZ)$ is free of rank $1$.

\subsection{The de Rham realization of $h^1(A)^\alpha \otimes h^1(A^\vee)^\beta$ and the normalized class $\cEK$}\label{de_rham_realization_abelian_variety_split_case}
Under our standing assumptions the de Rham realization of $h^1(A)_T^\alpha \otimes_{T} h^1(A^\vee)_T^\beta$ is particularly convenient as the torus $\TT_{L,F}$ is split. One has
\begin{align}\label{temporary_decomposition}
h^1(A)^\alpha_\dR = \bigoplus_{\iota \in J_{L^\alpha}} \Sym^{\iota \alpha}(h^1_\dR(A)).
\end{align}

\begin{proposition}
There is a canonical isomorphism
\[
(h^1(A)^\alpha \otimes_{L^\alpha} T)_\dR = \bigoplus_{\iota \in J_T} \Sym^{\iota \alpha}(h^1_\dR(A)).
\]
\end{proposition}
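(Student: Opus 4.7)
The plan is to unwind the definition of extension of coefficients, apply the decomposition \eqref{temporary_decomposition}, and use that $F$ contains the Galois closure of $T$. By the definition recalled in the subsection on motives with coefficients,
\[
(h^1(A)^\alpha \otimes_{L^\alpha} T)_\dR = h^1(A)^\alpha_\dR \otimes_{L^\alpha} T,
\]
where $h^1(A)^\alpha_\dR$ is viewed as an $L^\alpha$-module via its $L^\alpha \otimes F$-structure. Plugging in \eqref{temporary_decomposition} I would first rewrite
\[
(h^1(A)^\alpha \otimes_{L^\alpha} T)_\dR = \bigoplus_{\iota \in J_{L^\alpha}} \Sym^{\iota \alpha}(h^1_\dR(A)) \otimes_{L^\alpha,\iota} T,
\]
where on each summand $L^\alpha$ acts on $\Sym^{\iota\alpha}(h^1_\dR(A))$ through the embedding $\iota \colon L^\alpha \hookrightarrow F$, exactly as prescribed by the construction of the $L^\alpha$-module structure of $V^\alpha$ in Section \ref{section_alg_T_modules}.

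Next I would use that, since $F$ contains the Galois closure of $T$ (and hence of $L^\alpha \subset T$), every embedding $\iota \in J_{L^\alpha}$ factors through $F$ and the finite étale $L^\alpha$-algebra $T$ splits completely over $F$:
\[
T \otimes_{L^\alpha,\iota} F \;\cong\; \prod_{\substack{\iota' \in J_T \\ \iota'|_{L^\alpha} = \iota}} F,
\]
with each factor $F$ corresponding to an extension $\iota'$ of $\iota$ to $T$ via $t \otimes x \mapsto \iota'(t)x$. Because $\Sym^{\iota\alpha}(h^1_\dR(A))$ is already an $F$-module, tensoring with this identity yields
\[
\Sym^{\iota\alpha}(h^1_\dR(A)) \otimes_{L^\alpha,\iota} T \;\cong\; \bigoplus_{\substack{\iota' \in J_T \\ \iota'|_{L^\alpha} = \iota}} \Sym^{\iota\alpha}(h^1_\dR(A)),
\]
with the $T$-action on the $\iota'$-summand given by $\iota'$.

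Finally, I would observe that since $L^\alpha$ is by definition the fixed field of the stabilizer of $\alpha$ in $G_\QQ$, the type $\iota'\alpha \in I_L$ depends only on the restriction $\iota'|_{L^\alpha}$; equivalently, for any $\iota' \in J_T$ lifting $\iota \in J_{L^\alpha}$ one has $\Sym^{\iota'\alpha}(h^1_\dR(A)) = \Sym^{\iota\alpha}(h^1_\dR(A))$. Reindexing the resulting double sum over pairs $(\iota, \iota')$ with $\iota'|_{L^\alpha} = \iota$ by $\iota' \in J_T$ collapses it to the single decomposition
\[
(h^1(A)^\alpha \otimes_{L^\alpha} T)_\dR \;=\; \bigoplus_{\iota \in J_T} \Sym^{\iota \alpha}(h^1_\dR(A)),
\]
as claimed. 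The argument is purely a bookkeeping manipulation of tensor products; there is no substantive obstacle. The only point that requires care is the identification of the indexing sets: the enlargement from $J_{L^\alpha}$ to $J_T$ is precisely a reflection of the splitting of $T$ as an $L^\alpha$-algebra after base change to $F$, which is where the hypothesis on $F$ enters.
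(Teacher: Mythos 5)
Your proposal is correct and follows essentially the same route as the paper: decompose $h^1(A)^\alpha_\dR$ via (\ref{temporary_decomposition}) as an $L^\alpha$-module, use that $T \otimes_{L^\alpha,\iota} F$ splits as $\prod_{\iota' \in J_{T,\iota}} F$ because $T \subset F$, and reindex the resulting double sum over $J_T$. The only difference is that you make explicit the (correct, and implicitly used) observation that $\Sym^{\iota'\alpha} = \Sym^{\iota\alpha}$ whenever $\iota'|_{L^\alpha} = \iota$, which the paper leaves to the reader.
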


\begin{proof}
The decomposition (\ref{temporary_decomposition}) is one of $L^\alpha$-modules, where $L^\alpha$ acts on $\Sym^{\iota \alpha}(h^1_\dR(A))$ via the embedding $\iota: L^\alpha \into F$. By the assumption $T \subset F$, one has the canonical decomposition
$F \otimes_{\iota, L^\alpha} T = \prod_{\iota' \in J_{T,\iota}} F$,
and hence
\begin{align*}
\Sym^{\iota \alpha}(h^1_\dR(A)) \otimes_{\iota,L^\alpha} T &= \Sym^{\iota \alpha}(h^1_\dR(A)) 
\otimes_{F} (F \otimes_{\iota, L^\alpha} T) \\
&= \bigoplus_{\iota' \in J_{T,\iota}} \Sym^{\iota \alpha}(h^1_\dR(A)),
\end{align*}
from which the claim follows.
\end{proof}

\begin{proposition}\label{de_rham_split} 
There is a canonical isomorphism
\[
\bigl(h^1(A)^\alpha_T \otimes_T h^1(A^\vee)^\beta_T \bigr)_\dR = \bigoplus_{\iota \in J_T} \Sym^{\iota \alpha}(h^1_\dR(A)) \otimes_{F} \Sym^{\iota \beta}(h^1_\dR(A^\vee)).
\]
Moreover, under this identification, one has
\[
F^{d(\alpha)+d(\beta)} \bigl(h^1(A)^\alpha_T \otimes_T h^1(A^\vee)^\beta_T \bigr)_\dR = \bigoplus_{\iota \in J_{T/E}} \Sym^{\iota \alpha}(\omega_{A/F}) \otimes_{F} \Sym^{\iota \beta}(\omega_{A^\vee/F}).
\]
\end{proposition}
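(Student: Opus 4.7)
The first claim will follow by combining the preceding proposition (giving the decomposition of $(h^1(A)^\alpha_T)_\dR$) with its analogue for $h^1(A^\vee)^\beta_T$, and then handling the tensor product over $T$. In the de Rham realization the tensor product $\otimes_T$ becomes a tensor product over $T\otimes F$; since our standing hypothesis is that $F$ contains the Galois closure of $T$, there is a canonical isomorphism $T\otimes F\cong\prod_{\iota\in J_T}F$, under which the summand $\Sym^{\iota\alpha}(h^1_\dR(A))$ appearing in the decomposition is precisely the component on which $T\otimes F$ acts through the $\iota$-th projection. Tensoring the two decompositions over $T\otimes F$ therefore kills all off-diagonal pairs and leaves the diagonal sum $\bigoplus_{\iota\in J_T}\Sym^{\iota\alpha}(h^1_\dR(A))\otimes_F\Sym^{\iota\beta}(h^1_\dR(A^\vee))$, as desired.

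For the second claim I will compute the Hodge filtration on each individual summand and determine for which $\iota\in J_T$ the piece $F^{d(\alpha)+d(\beta)}$ can be non-trivial. The $L$-equivariant decomposition $h^1_\dR(A)=\bigoplus_{\sigma\in J_L}h^1_\dR(A)(\sigma)$ refines the Hodge filtration: each summand is one-dimensional over $F$, equal to $\omega_{A/F}(\sigma)$ (hence in Hodge degree $1$) for $\sigma\in\Phi$, and to $\Lie(A^\vee/F)(\sigma)$ (in Hodge degree $0$) for $\sigma\in\ol{\Phi}$. Since $A^\vee$ is of CM-type $(L,\ol{\Phi})$, the analogous statement for $h^1_\dR(A^\vee)(\sigma)$ has the roles of $\Phi$ and $\ol{\Phi}$ swapped. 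Taking the corresponding symmetric powers, and using that $\iota\alpha$ is supported on $\iota\Phi$ and $\iota\beta$ on $\iota\ol{\Phi}$, I find that the total Hodge degree of the $\iota$-summand equals $\sum_{\sigma\in\Phi}\iota\alpha(\sigma)+\sum_{\sigma\in\ol{\Phi}}\iota\beta(\sigma)$, which is bounded above by $d(\alpha)+d(\beta)$.

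The key input, and the only genuine use of criticality, will be that $\alpha(\sigma)\geq 1$ for every $\sigma\in\Phi$: this forces $\iota\alpha$ to have full support on $\iota\Phi$, so the equality $\sum_{\sigma\in\Phi}\iota\alpha(\sigma)=d(\alpha)$ compels $\iota\Phi\subset\Phi$ and hence $\iota\Phi=\Phi$ (both sets having cardinality $g$), which by the very definition of the reflex field $E$ of $(L,\Phi)$ is equivalent to $\iota|_E=1_E$, i.e.\ $\iota\in J_{T/E}$. Once $\iota\in J_{T/E}$, the $\beta$-sum automatically attains its maximum $d(\beta)$ as well, and $\Sym^{\iota\alpha}(h^1_\dR(A))$ collapses to $\Sym^{\iota\alpha}(\omega_{A/F})$ (with the analogous statement for the $\beta$-factor), giving exactly the summand described in the statement. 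There is no serious obstacle in this argument; the whole proof is essentially bookkeeping once the $L$-isotypical refinement of the Hodge filtration and the strict positivity of $\alpha$ on $\Phi$ are in hand.
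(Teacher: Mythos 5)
Your proof is correct and follows essentially the same route as the paper: the first identity comes from tensoring the two $J_T$-indexed decompositions over $T\otimes F\cong\prod_{\iota\in J_T}F$, which annihilates the off-diagonal terms, and the second from the observation that the top Hodge piece of the $\iota$-summand is $\Sym^{\iota\alpha}(\omega_{A/F})\otimes\Sym^{\iota\beta}(\omega_{A^\vee/F})$, nonzero precisely when $\iota\alpha$ is supported on $\Phi$, i.e.\ (using $\alpha(\sigma)\geq 1$ on $\Phi$) when $\iota\Phi=\Phi$, which means $\iota|_E=1_E$. Your write-up is somewhat more explicit about the Hodge-degree bookkeeping than the paper's, but there is no difference in substance.
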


\begin{proof}
(i) For each $\iota \in J_T$, let us denote $V_\iota = \Sym^{\iota \alpha}(h^1_\dR(A))$ and $W_\iota = \Sym^{\iota\beta}(h^1_\dR(A^\vee))$ for brevity. We regard $V_\iota$ and $W_\iota$ as $T \otimes F$-modules via the homomorphism $\iota: T \otimes F \to F$, $t \otimes x \mapsto \iota(t)x$. Then, for $\iota, \iota' \in J_T$, there is a canonical isomorphism
\begin{align*}
V_\iota \otimes_{T \otimes F} W_{\iota'} &= V_\iota \otimes_{F} (F \otimes_{\iota, T \otimes F, \iota'} F) \otimes_{F} W_{\iota'},
\end{align*}
and with the observation that
\[
F \otimes_{\iota, T \otimes F, \iota'} F = \begin{cases} F &\textrm{ if } \iota = \iota' \\ 0 &\textrm{ if } \iota \neq \iota' \end{cases}
\]
the claim follows. Assertion (ii) follows from the constructions of $h^1(A)^\alpha$ and $h^1(A^\vee)^\beta$ and the fact that for the abelian variety $A/F$, the Hodge filtration is given by $h^1_\dR(A) \supset \omega_{A/F} \supset 0$ (and analogously for $A^\vee$). Note that 
$\Sym^{\iota \alpha}(\omega_{A/F}) \neq 0$ if and only if $\iota|_E = 1_E$ since $\omega_{A/F}(\sigma) \neq 0$ if and only if $\sigma \in \Phi$, and $\iota \alpha$ has support $\Phi$ if and only if $\iota|_E = 1_E$. 
\end{proof}

Via Proposition \ref{de_rham_split} and the Eisenstein-Kronecker classes, we can now associate to $\chi$ a class $\cEK$ in the de Rham realization of $h^1(A)^\alpha_T \otimes h^1(A^\vee)^\beta_T$:

\begin{definition}\label{normalized_ek_class}
Via Proposition \ref{de_rham_split}, we define the \emph{normalized Eisenstein-Kronecker class associated to $\chi$} by
\[
\cEK \coloneqq \frac{(-1)^{\frac{g(g-1)}{2}}}{(\alpha-1)!(N\frc - \chi(\frc)^{-1})}   \sum_{[\frb] \in I^\frf/P^\frf} \chi(\frb) \cdot 
(\wtilde{\EK}_A^{\iota \beta, \iota \alpha - 1}(f_{[\frc]},x_\frb) )_{\iota \in J_{T/E}}
\]
in $F^{d(\alpha)+d(\beta)} (h^1(A)^\alpha_T \otimes_T h^1(A^\vee)^\beta_T)_\dR$, where we let the sum range over a set of integral representatives of $I^\frf/P^\frf$ which are coprime to $\frc$.
\end{definition}

\subsection{The class $\cEK$ and $L$-values}
We now reformulate Proposition \ref{LValueProposition} in terms of the motive $h^1(A)_T^\alpha \otimes h^1(A^\vee)^\beta_T$. This reinterprets the $L$-value $L(\chi,0)$ as a period. 
Fix an $\cO_L$-linear isomorphism $\xi: \cO_L \cong H_1(A(\CC),\ZZ)$ and let $\gamma \in h^1_B(A)$ be the corresponding element. 
Let $\sigma \in J_F$. Let $\gamma_\sigma \in h^1_B(A^\sigma)$ be an $L$-basis and let $\gamma_\sigma^\vee \in h^1_B(A^{\sigma,\vee})$ be the dual basis with respect to the canonical pairing $h^1_B(A^\sigma) \otimes_L h^1_B(A^{\sigma,\vee}) \to (2\pi i)^{-1} L$. Recall the definition of the elements from Proposition \ref{t_elements_independence}: For every $\sigma \in J_F$, choose an extension to $G_\QQ$, which we denote by the same letter. Let $\lambda_\sigma \in L_f^\times$ be the unique element such that $I_f(\gamma)^\sigma = \lambda_\sigma I_f(\gamma_\sigma)$. Then
$t_\sigma = \chi(\lambda_\sigma)^{-1} \varepsilon_\Phi(\sigma) \in T^\times$
only depends on $\sigma \in J_F$. 

\begin{theorem}\label{the_main_theorem}
Let $\sigma \in J_F$ and $\gamma_\sigma$ and $\gamma_\sigma^\vee$ as above. Let $\eta = \sigma|_E \in J_E$. One has
\[
I_\infty(\cEK^\sigma) = (2\pi i)^{d(\beta)}  \bigl(L_\frf(\chi_\iota,0)\bigr)_{\iota \in J_{T,\eta}} \cdot t_\sigma  (\gamma_\sigma^{\balpha} \otimes (\gamma_\sigma^\vee)^{\bbeta} )
\]
in $(h^1(A)_T^\alpha \otimes h^1(A^\vee)_T^\beta)^\sigma_B \otimes \CC$.
\end{theorem}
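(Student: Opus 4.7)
My plan is to unfold $\cEK^\sigma$ via Proposition~\ref{SignLemma}, apply the explicit computation of Theorem~\ref{LValueProposition} component-wise, pass from de~Rham to Betti via Proposition~\ref{MAlphaPeriod}, and finally collapse the sum over $[\frb]$ telescopically to obtain the full Hecke $L$-function.

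First, I would unfold $\cEK^\sigma$. By Proposition~\ref{de_rham_split} the class $\cEK$ decomposes as a tuple indexed by $\iota \in J_{T/E}$, with $\iota$-component equal to $\chi_\iota(\frb)\cdot\tEK_A^{\iota\beta,\iota\alpha-1_\Phi}(f_{[\frc]},x_\frb)$ (multiplied by the global normalization factor), because the $T$-scalar $\chi(\frb)$ acts on the $\iota$-summand via $\iota$. Applying $\sigma$ carries the $\iota$-summand of the de~Rham realization of $h^1(A)^\alpha\otimes h^1(A^\vee)^\beta$ to the $\sigma\iota$-summand of the de~Rham realization of $h^1(A^\sigma)^\alpha\otimes h^1(A^{\sigma,\vee})^\beta$; reindexing via $\iota' := \sigma\iota$ turns $J_{T/E}$ into $J_{T,\eta}$. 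By Proposition~\ref{SignLemma} (and the fact that $f_{[\frc]}$ has $\ZZ$-values, so $f_{[\frc]}^\sigma = f_{[\frc]}$), conjugation introduces the single sign $\varepsilon_\Phi(\sigma)$ on each component. The result is
\[
\cEK^\sigma = \frac{(-1)^{g(g-1)/2}\,\varepsilon_\Phi(\sigma)}{(\alpha-1)!\,(N\frc-\chi(\frc)^{-1})} \sum_{[\frb]}\bigl(\chi_{\iota'}(\frb)\,\tEK_{A^\sigma}^{\iota'\beta,\iota'\alpha-1_{\sigma\Phi}}(f_{[\frc]},x_\frb^\sigma)\bigr)_{\iota'\in J_{T,\eta}}.
\]

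Next, I would apply Theorem~\ref{LValueProposition} (with the motive now $A^\sigma$, CM-type $\sigma\Phi$, and embedding $\iota'\in J_{T,\eta}=J_{T,\sigma|_E}$) to rewrite each $\chi_{\iota'}(\frb)\,\tEK_{A^\sigma}^{\iota'\beta,\iota'\alpha-1_{\sigma\Phi}}$ in terms of partial $L$-values and the forms $\omega(A^\sigma)^{\iota'\alpha}\otimes\omega(A^{\sigma,\vee})^{\iota'\beta}$. Applying $I_\infty$ and invoking Proposition~\ref{MAlphaPeriod} sends these forms to $\Omega_\sigma^{\iota'\alpha}(\Omega_\sigma^\vee)^{\iota'\beta}\cdot\gamma_\sigma^{\iota'\alpha}\otimes(\gamma_\sigma^\vee)^{\iota'\beta}$, which exactly cancels the period denominator produced by the theorem, along with the signs $(-1)^{g(g-1)/2}$ and factorials $(\alpha-1)!$ from the normalization of $\cEK$.

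Then I would sum over $[\frb]\in I^\frf/P^\frf$. Since multiplication by the fixed ideal class $[\ell_\sigma\lambda_\sigma]$ (resp.\ $[\ell_\sigma\lambda_\sigma\frc]$) is a bijection on the ray class group modulo $\frf$, the sums $\sum_{[\frb]}L_\frf(\chi_{\iota'},0,[\ell_\sigma\lambda_\sigma\frb])$ and $\sum_{[\frb]}L_\frf(\chi_{\iota'},0,[\ell_\sigma\lambda_\sigma\frb\frc])$ both telescope to the full $L$-value $L_\frf(\chi_{\iota'},0)$. The combinatorial factor $N\frc-\chi_{\iota'}(\frc)^{-1}$ emerges and cancels the remaining denominator $N\frc-\chi(\frc)^{-1}$ in the $\iota'$-component. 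What remains on the $\iota'$-component is exactly $(2\pi i)^{d(\beta)}\cdot\varepsilon_\Phi(\sigma)\chi_{\iota'}(\lambda_\sigma)^{-1}\cdot L_\frf(\chi_{\iota'},0)\cdot\gamma_\sigma^{\iota'\alpha}\otimes(\gamma_\sigma^\vee)^{\iota'\beta}$, and recognizing $\varepsilon_\Phi(\sigma)\chi_{\iota'}(\lambda_\sigma)^{-1} = \iota'(t_\sigma)$ reassembles this into the claimed product $(2\pi i)^{d(\beta)}\,(L_\frf(\chi_\iota,0))_{\iota\in J_{T,\eta}}\cdot t_\sigma\cdot(\gamma_\sigma^{\balpha}\otimes(\gamma_\sigma^\vee)^{\bbeta})$.

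The main bookkeeping hurdle is to keep straight the two different decompositions in play: the canonical $J_{T/E}$-decomposition of the Hodge-filtered piece for $A$, and the $J_{T,\eta}$-decomposition that appears after conjugation by $\sigma$ on $A^\sigma$. Verifying that the $T$-coefficient $\chi(\frb)$ (respectively the normalization $N\frc-\chi(\frc)^{-1}$) transforms on the $\iota$-summand into $\chi_\iota(\frb)$ (respectively $N\frc-\chi_\iota(\frc)^{-1}$) and then correctly migrates along the reindexing $\iota\mapsto\sigma\iota$ is the delicate point; every other step reduces to bookkeeping of the results cited above.
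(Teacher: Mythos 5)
Your proposal is correct and follows essentially the same route as the paper's proof: it combines Proposition~\ref{SignLemma} (producing the sign $\varepsilon_\Phi(\sigma)$), Theorem~\ref{LValueProposition} (component-wise on each $\iota'\in J_{T,\eta}$), and Proposition~\ref{MAlphaPeriod} (cancelling the period denominators), then collapses the sum over $[\frb]$ by the ray-class bijection to produce $(N\frc-\chi_{\iota'}(\frc)^{-1})L_\frf(\chi_{\iota'},0)$ and identifies $\varepsilon_\Phi(\sigma)\chi_{\iota'}(\lambda_\sigma)^{-1}=\iota'(t_\sigma)$. The only difference is that you spell out the reindexing $J_{T/E}\to J_{T,\eta}$, $\iota\mapsto\sigma\iota$, which the paper compresses into the phrase ``by reindexing.''
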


\begin{proof}
Choose $L \otimes F$-generators $\omega(A^\sigma)$ and $\omega(A^{\sigma,\vee})$ of $\omega_{A^\sigma/F}$ and $\omega_{A^{\sigma,\vee}/F}$ and let $\Omega_\sigma$ and $\Omega_\sigma^\vee$ denote the periods as in Definition \ref{def_omega_periods}.
Proposition \ref{MAlphaPeriod} implies
\[
I_\infty( \omega(A^\sigma)^{\balpha} \otimes \omega(A^{\sigma,\vee})^{\bbeta}) = \Omega_\sigma^{\balpha} \Omega_\sigma^{\vee,\bbeta} \cdot (\gamma_\sigma^{\balpha} \otimes \gamma_\sigma^{\bbeta}).
\]
Now, for each $\sigma \in J_F$ choose an element $\ell_\sigma \in L^\times$ such that $\ell_{\sigma,v} \lambda_{\sigma,v} \equiv 1 \mod \frf_v$ for all finite places $v$ of $L$. 
Together with Proposition \ref{SignLemma} and Theorem \ref{LValueProposition}, we obtain
\begin{align*}
I_\infty(\cEK^\sigma) &= \frac{(-1)^{\frac{g(g-1)}{2}}}{(\alpha-1)!(N\frc - \chi(\frc)^{-1})} \sum_{[\frb]} \chi(\frb) \varepsilon_\Phi(\sigma) \cdot I_\infty\Bigl( \wtilde{\EK}^{\iota\beta,\iota\alpha-1}_{A^\sigma}(f_{[\frc]}, x_\frb^\sigma) \Bigr)_{\iota \in J_{T,\eta}} \\
&= (2 \pi i)^{d(\beta)} (N\frc- \chi(\frc)^{-1})^{-1} \varepsilon_\Phi(\sigma) \chi(\lambda_\sigma)^{-1} \cdot \\
&\sum_{[\frb]} \Bigl((N\frc L_\frf(\chi_\iota,0, [\ell_\sigma \lambda_\sigma \frb]) - \chi_\iota(\frc)^{-1} L_\frf(\chi_\iota,0, [\ell_\sigma \lambda_\sigma \frb \frc]))\Bigr)_{\iota \in J_{T,\eta}}  (\gamma_\sigma^{\balpha} \otimes (\gamma_\sigma^\vee)^{\bbeta}) \\
&= (2\pi i)^{d(\beta)} \bigl(L_\frf(\chi_\iota,0)\bigr)_{\iota \in J_{T,\eta}} \cdot t_\sigma (\gamma_\sigma^{\balpha} \otimes (\gamma_\sigma^\vee)^{\bbeta}),
\end{align*}
where the first and last equality follow by reindexing.
\end{proof} 

We now come to the main result of this paper. For its statement, we momentarily drop the standing assumptions of this section.

\begin{theorem}\label{deligne_conjecture}
Let $L$ and $T$ be number fields and $\chi: \AA_L^\times \to T^\times$ a critical algebraic Hecke character of conductor dividing $\frf$. Then
$L_\frf(\chi,0)$ and $c^+(\chi)$ coincide up to a factor in $T$.
\end{theorem}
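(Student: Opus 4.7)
First I would separate off the easy case. If $L$ is totally real then every critical algebraic Hecke character is of the form $\varrho \cdot \lVert\cdot\rVert^{-w/2}$ with $\varrho$ finite, and Deligne's conjecture in this case follows from the theorems of Siegel--Klingen (\cite{siegel}, \cite{klingen}). So I may assume $L$ is totally imaginary and contains a CM-field, and I will place myself in the standing set-up of Section \ref{section_8_deligne_conj}: a CM-type $\Phi$ of $L$ with respect to which $\chi_a = \beta - \alpha$ is critical, the reflex field $E$, a finite Galois extension $F/E$ containing the Galois closures of $L$ and $T$, and an abelian variety $A/F$ of CM-type $(L,\Phi)$ with $\cO_L$-free Betti $H_1$ and all $\frf\frc$-torsion rational over $F$, where $\frc$ is chosen coprime to $\frf$ and with $N\frc - \chi(\frc)^{-1} \in T^\times$.

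\noindent\textbf{Key construction and computation.} The core input is the normalized Eisenstein--Kronecker class $\cEK$ of Definition \ref{normalized_ek_class}, which by Proposition \ref{de_rham_split} lies in $F^{d(\alpha)+d(\beta)}(h^1(A)^\alpha_T \otimes h^1(A^\vee)^\beta_T)_\dR$. Using the identification (\ref{alt_constr_m_xi}), i.e.\ $M(\Xi) \cong e_\Xi \cdot R_{F/E}(h^1(A)^\alpha_T \otimes h^1(A^\vee)^\beta_T)$, and the fact that the de Rham realization of $RM(\Xi)$ is simply $M(\Xi)_\dR$ viewed as a $T$-vector space, I regard $e_\Xi \cdot \cEK$ as an element of $F^{d(\alpha)+d(\beta)}RM(\Xi)_\dR$. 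Applying Theorem \ref{the_main_theorem} for each $\sigma \in J_F$, applying $e_\Xi$ to the resulting identity (so that $t_\sigma \gamma_\sigma^{\balpha} \otimes (\gamma_\sigma^\vee)^{\bbeta}$ becomes $t_\sigma b(\gamma_\sigma)$), and summing over $\sigma \in J_F$ by reindexing via $\sigma \mapsto (\eta, \sigma|_E=\eta)$, Definition \ref{def_a_eta_elements} assembles the inner sums into the basis elements $a_\eta$, giving
\begin{equation*}
I_\infty(e_\Xi \cdot \cEK) \;=\; (2\pi i)^{d(\beta)} \cdot L_\frf(\chi,0) \cdot \sum_{\eta \in J_E} e(\eta)\, a_\eta
\end{equation*}
in $RM(\Xi)_B \otimes \CC$.

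\noindent\textbf{Comparison with Blasius' period.} By Theorem \ref{a_eta_elements_transformation}, the explicit system $\{a_\eta\}_{\eta \in J_E}$ satisfies the transformation law $I_f(a_\eta)^\tau = \xi(\tau,\eta) I_f(a_{\tau\eta})$ required by Proposition \ref{existence_betti_system}, so it is admissible in Blasius' period relation. Theorem \ref{blasius_period_relation} then furnishes a basis $\omega$ of the one-dimensional $T$-vector space $F^{d(\alpha)+d(\beta)}RM(\Xi)_\dR$ and an element $v(\chi) \in (T\otimes\CC)^\times$ with $I_\infty(\omega) = v(\chi) \sum_\eta e(\eta) a_\eta$ and $(2\pi i)^{-d(\beta)} v(\chi) \equiv c^+(\chi) \bmod T^\times$. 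Writing $e_\Xi \cdot \cEK = t \cdot \omega$ for some $t \in T$ (using one-dimensionality) and equating the two expressions for $I_\infty(e_\Xi \cdot \cEK)$ as coefficients of the common $T \otimes \CC$-basis $\sum_\eta e(\eta) a_\eta$ of $RM(\Xi)_B \otimes \CC$ yields
\begin{equation*}
(2\pi i)^{d(\beta)} L_\frf(\chi,0) \;=\; t \cdot v(\chi) \quad \text{in } T \otimes \CC,
\end{equation*}
so that $L_\frf(\chi,0) \equiv t \cdot c^+(\chi) \bmod T^\times$ with $t \in T$, which is the claim.

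\noindent\textbf{Expected difficulty.} The arithmetic heavy lifting has been packed into Theorem \ref{the_main_theorem} (the reinterpretation of $L_\frf(\chi,0)$ as a period of $\cEK$) and Theorem \ref{a_eta_elements_transformation} (the existence and Galois transformation of the special Betti basis via the reflex motive presentation); the present theorem is the clean synthesis of these with Blasius' period relation. The only genuine care needed is in the bookkeeping of auxiliary data: choosing $A/F$ with the required freeness of $H_1(A(\CC),\ZZ)$ (achieved by Serre's tensor construction, cf.\ Remark \ref{serretensorCM}) and an integral ideal $\frc$ coprime to $\frf$ with $\chi(\frc) \neq N\frc^{-1}$ so that the denominator $N\frc - \chi(\frc)^{-1}$ introduced in Definition \ref{normalized_ek_class} is a unit in $T$; such a $\frc$ exists because $\chi \neq N^{-1}$ by criticality, and any change in these auxiliary choices only modifies the final identity by a factor in $T^\times$.
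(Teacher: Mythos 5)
Your proposal is correct and follows essentially the same route as the paper: the totally real case via Siegel--Klingen, then the synthesis of Theorem \ref{the_main_theorem}, Theorem \ref{a_eta_elements_transformation} and Blasius' period relation, with your explicit coefficient comparison $e_\Xi\cdot\cEK = t\cdot\omega$ even handling the case $L(\chi,0)=0$ cleanly. The only point left tacit is that placing yourself in the standing set-up of Section \ref{section_8_deligne_conj} requires first enlarging $T$ to contain $L^\alpha$, which is justified because Deligne's conjecture is invariant under extension of coefficients.
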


\begin{proof}
Since $\chi$ is critical, $L$ must be a totally real field or a totally complex field containing a CM-field.
In the case where $L$ is totally real, the Theorem follows by classical results due to Euler, Siegel and Klingen. In the totally complex case there exists a CM-type $\Phi$ of $L$ such that $\chi_a = \beta - \alpha$ as in Section \ref{setup_hecke_character}. We may assume that $L^\alpha \subset T$ since Deligne's conjecture is invariant under extension of coefficients, cf.\ \cite{deligne}, Remarque 2.10. Choose a finite Galois extension $F/E$ such that $F$ contains the Galois closure of $L$ and $T$ and an abelian variety $A/F$ of CM-type $(L,\Phi)$  such that the $\frf$- and $\frc$-torsion points of $A$ are defined over $F$ and such that $H_1(A(\CC),\ZZ)$ is free of rank $1$ as an $\cO_L$-module. We may enlarge $\frf$ such that $\frf \neq \cO_L$ and choose an integral ideal $\frc$ coprime to $\frf$ such that $\chi(\frc) \neq N\frc^{-1}$. We can associate to $\chi$ the normalized Eisenstein-Kronecker class $\cEK$ as in Definition \ref{normalized_ek_class} and regard it as an element in $F^{d(\alpha)+d(\beta)} R_{F/\QQ}(h^1(A)^\alpha_T \otimes h^1(A^\vee)^\beta_T)_\dR$. Via (\ref{alt_constr_m_xi}) we obtain an element
\[
e_\Xi \cdot \cEK \in F^{d(\alpha)+d(\beta)} RM(\Xi)_\dR.
\]
Note that $I_\infty(\cEK) = \sum_{\sigma \in J_F} I_\infty(\cEK^\sigma)$, where on the left, we regard $\cEK$ in $R_{F/\QQ}(h^1(A)^\alpha_T \otimes h^1(A^\vee)^\beta_T)_\dR$ and on the right, every $\cEK^\sigma$ is regarded as an element in $(h^1(A^\sigma)^\alpha_T \otimes h^1(A^{\sigma,\vee})^\beta_T)_\dR$. Theorem \ref{the_main_theorem} therefore implies
\begin{align*}
I_\infty(\cEK) = (2\pi i)^{d(\beta)} \cdot L(\chi,0) \cdot \sum_{\eta \in J_E} e(\eta) \sum_{\sigma \in J_{F,\eta}} t_\sigma (\gamma_\sigma^{\balpha} \otimes (\gamma_\sigma^\vee)^{\bbeta})
\end{align*}
and applying the idempotent $e_\Xi$ gives
\[
I_\infty(e_\Xi \cdot \cEK) = (2\pi i)^{d(\beta)} \cdot L(\chi,0) \cdot \sum_{\eta \in J_E} e(\eta) a_\eta,
\]
with the elements $a_\eta$ from Definition \ref{def_a_eta_elements}. Now, Deligne's conjecture follows from Theorem \ref{a_eta_elements_transformation} and Blasius' Theorem \ref{blasius_period_relation}. Note that we do not have to check that $e_{\Xi} \cdot \cEK \neq 0$. Indeed, if $L(\chi,0) = 0$, there is nothing to show and if $L(\chi,0) \neq 0$, then $e_\Xi \cdot \cEK$ also does not vanish since no non-zero element in $T \otimes \CC$ annihilates $\sum_{\eta \in J_E} e(\eta)a_\eta$ by Theorem \ref{a_eta_elements_transformation}.
\end{proof}

\bibliographystyle{amsalpha}
\bibliography{deligne}

\end{document}